\begin{document}

\title{Group-type Subfactors and Hadamard Matrices}
\author{Richard David Burstein}
\maketitle

\begin{abstract}
A hyperfinite $II_1$ subfactor may be obtained from a symmetric
commuting square via iteration of the basic construction.
For certain commuting squares constructed from
Hadamard matrices, we describe this subfactor
as a group-type inclusion $R^H \subset R \rtimes K$, where $H$ and $K$
are finite groups with outer actions on the hyperfinite
$II_1$ factor $R$.  We find the group of outer automorphisms
generated by $H$ and $K$, and use the method of Bisch and Haagerup
to determine the principal and dual principal graphs.  In some cases
a complete classification is obtained by examining the element
of $H^3(H \ast K / Int R)$ associated with the action.
\end{abstract}

\newcommand{\Z}{\mathbb{Z}}
\newcommand{\B}{\mathcal{B}}
\newcommand{\tr}{ {\rm tr} }

\newcommand{\commsq}[4]{\begin{array}{ccc}#3&\subset&#4\\
\cup& &\cup\\
#1&\subset&#2\end{array}}

\newcommand{\hadsq}[2]{
\commsq{\mathbb{C}}{#1 \mathbb{C}^#2 #1^*}{\mathbb{C}^#2}
{M_{#2}(\mathbb{C})}
}

\newtheorem{lem}{Lemma}[section]

\newtheorem{thm}{Theorem}[section]

\newtheorem{cor}{Corollary}[section]

\theoremstyle{definition}
\newtheorem{dfn}{Definition}

\section{Introduction}

In \cite{JoI}, Jones described the basic construction on a
finite-index subfactor $M_0 \subset M_1$ of type $II_1$.
Iterating this construction gives the tower of factors
$$M_0 \subset M_1 \subset M_2 \subset M_3 \subset ...$$
Taking relative commutants yields two towers of finite
dimensional algebras
$$\begin{array}{ccccccccc}
\mathbb{C}=M_0' \cap M_0&\subset&M_0' \cap M_1&\subset&M_0' \cap M_2&\subset&...\\
                    &       &\cup       &       & \cup      &       & \\
                    &\mathbb{C}=&M_1' \cap M_1&\subset&M_1' \cap M_2&\subset&...\end{array}$$

This is the standard invariant of the subfactor.  The principal
graph and dual principal graph are obtained from the Bratteli
diagrams of these inclusions.  While usually not a complete invariant,
these graphs summarize much important data
about the subfactor.  The standard invariant is a complete invariant
for amenable subfactors~\cite{PoA}.

The classification problem is fundamental in the study of subfactors.
In this paper we address this problem for a family of hyperfinite
$II_1$ subfactors constructed from finite data.  We will provide
principal graphs, and in some cases a full classification up to
subfactor isomorphism.

We recall the definition of commuting squares from~\cite{PoC}.
Let $$\commsq{A}{B}{C}{D}$$ be a quadrilateral of Von Neumann algebras,
with trace.  We may construct the Hilbert space $L^2(D)$ and the
conditional expectations $E_B$, $E_C$ onto $L^2(B)$, $L^2(C)$
respectively.  This quadrilateral is a commuting square if
$E_B$ and $E_C$ commute.

A commuting square is specified
by its four constituent algebras,
the various inclusions, and
certain additional data indicating how the towers
$A \subset B \subset D$ and $A \subset C \subset D$ are related.
This data can be summarized as the biunitary connectionw,
which is an element of the multi-matrix algebra $A' \cap D$.

Two commuting squares
$$\commsq{A}{B}{C}{D}, \quad \commsq{W}{X}{Y}{Z}$$
are isomorphic if there is an algebraic $^*$-isomorphism from
$W' \cap Z$ to $D \cap A'$ which sends $Y \cap W'$ and
$ W' \cap X$ to $A' \cap C$ and $ A' \cap B$ respectively.

Goodman, de la Harpe, and Jones~\cite{GHJ} showed how to
construct a hyperfinite $II_1$ subfactor
from a commuting square of finite-dimensional $C^*$-algebras,
via iteration of the basic construction.
Isomorphic commuting squares produce isomorphic subfactors,
but the converse is not the case.
Certain additional conditions
are imposed on the square: the Bratteli diagrams of
the four inclusions must be connected, and the trace
on the largest algebra should be the unique Markov trace.  In addition
the square should be symmetric, a property which may be determined
from the Bratteli diagrams.  In general, we will
require our commuting squares to have these properties.

As described in~\cite{JS}, the standard invariant of a commuting-square
subfactor is computable to any number of levels in finite time.
However, the time required grows exponentially with the level,
so this method cannot be used to find the full principal graph except
in the most trivial examples.

If the commuting square is flat, then the standard invariant of the
corresponding subfactor may
be found by inspection (see~\cite{EK}).
Likewise,
the standard invariant may be easily computed if the subfactor
is depth 2.  A few
more complex examples have also been studied,
 such as the bipermutation
construction of Krishnan and Sunder~\cite{KS}.  For a general
commuting square, however, even determining finite depth or amenability
of its subfactor is an intractible problem.

A Hadamard matrix $H$ is a real $n$ by $n$ matrix all of whose
entries are $\pm 1$, with $H H^T = n1$.  $n$ must be $1$, $2$,
or a multiple of 4, but it is not known if Hadamard matrices
exist for all such $n$.  These matrices have
been studied for over a century, with connections to areas
as diverse as signals processing, cryptography, and group cohomology.  A complex Hadamard
matrix may be defined similarly
as a unitary matrix all of whose entries have the same complex modulus~\cite{Ho}.

For any complex Hadamard matrix, 
the quadrilateral $$\hadsq{H}{n}$$ commutes, and
induces a commuting-square subfactor.
For $n>6$ many families of such matrices exist, giving a wide
variety of examples of these Hadamard subfactors.
Subfactors of this form were examined in~\cite{JoP}.
Their planar algebras (or equivalently, their standard invariants)
are described by spin models, which
makes computing the first few levels of the standard invariant relatively
straightforward.  For example, the first relative commutant
of a Hadamard subfactor is always abelian~\cite{JoP}.

Only a few Hadamard subfactors have been studied.
The $n \times n$ Fourier matrix is defined by $F_{ij}=\xi^{ij}$,
where $\xi$ is a primitive $n$th root of unity.
It may be easily computed using the profile matrix of~\cite{JoP}
that Fourier matrices
and their tensor products give depth-2 subfactors.
Some other examples of small index are studied by
Camp and Nicoara in~\cite{CN},
with full computations of the principal graph
for a few index-4 examples.
For practically all Hadamard subfactors, nothing is known about
the standard invariant beyond the first few levels.

In this paper we will partially
classify a family of Hadamard
subfactors obtained from tensor products of Fourier matrices
(and slight generalizations thereof) with a certain additional twist.
This twisted tensor product construction was suggested to the author
by Jones.
As we will show, these
subfactors may be described as inclusions
of the form $R^H \subset R \rtimes K$, for appropriate actions
of finite abelian groups $H$ and $K$ on the hyperfinite $II_1$
factor $R$.

These group type subfactors were studied by Bisch and Haagerup
in~\cite{BH}.  In this paper,
the authors give a method for computing the principal graph of any
such subfactor from the image of the free product of $H$ and $K$
in $Out R$.

To analyze our Hadamard examples, we will first (section 2) discuss
automorphisms of the hyperfinite $II_1$ factor which are compatible
with the structure of the Jones tower in a particular way.  For
such automorphisms, determining outerness reduces to a problem
in finite-dimensional linear algebra.

In section 3 we will discuss those Hadamard matrices which produce
depth-2 subfactors.  Taking the twisted tensor product of two
such matrices gives a new Hadamard matrix, whose subfactor
is of Bisch-Haagerup type.  We will show that in this case
the group actions have the compatibility property mentioned above.

The principal graph is not a complete invariant even for finite-depth
subfactors.  To classify the 
Bisch-Haagerup subfactors
up to isomorphism, it is also necessary to consider a certain
scalar 3-cocycle $\omega \in H^3(H \ast K / Int R)$
associated with the group action.
We will discuss this cohomological data in section 4, using
on the conjugacy invariants of ~\cite{JoT}.

In section 5 we will use these methods to describe several examples.
As well as examining certain finite-depth cases,
we will provide infinite-depth Hadamard subfactors of every composite
index.  We will use the results of section 4 to fully classify
all index-4 Hadamard subfactors.

Throughout this paper, if $A$ is a Von Neumann algebra acting
on a Hilbert space $H$, we will take $A'$ to be the commutant of $A$ in
the set $\B(H)$ of bounded linear operators on $H$.
$A' \cap A = Z(A)$ is the center of $A$.

\section{Compatible Automorphisms of the Hyperfinite $II_1$ Factor}

\subsection{Introduction}

Let $B_0 \subset B_1$ be a connected inclusion
of finite-dimensional $C^*$ algebras, along with its
Markov trace.  Iterating the basic
construction gives a tower of algebras
$$B_0 \subset B_1 \subset B_2 \subset ... \subset B_{\infty}$$
where $B_{\infty}$ is the hyperfinite $II_1$ factor.
We examine
automorphisms of $B_{\infty}$ which are compatible
with the structure of the tower.

\subsection{Properties of the Jones Tower}

We recall some basic properties of the iterated basic construction
on finite-dimensional Von Neumann algebras.  This discussion
is largely taken from~\cite{JS}.

Let $B_0 \subset B_1$ be a connected inclusion of finite-dimensional Von
Neumann algebras.  An inclusion is connected if the commutant
$B_0 \cap B_1'$ is equal to $\mathbb{C}$.  Defining a trace $\tr$ on $B_1$
makes $L^2(B_1)$ into a Hilbert space with inner product $<x,y>=\tr(y^*x)$,
on which $B_1$ acts by left multiplication.
We may then define the conditional expectation $e=E_{B_0}$, which
is the orthogonal projection onto the closed subspace
$L^2(B_0) \subset L^2(B_1)$.
This allows us to perform the basic construction on
$B_0 \subset B_1$, obtaining
$B_2 = \{B_1, e \}'' \subset \B(L^2(B_1))$.
If we extend $\tr$ to a trace on $B_2$,
we may then iterate
this procedure, obtaining the Jones tower
$$B_0 \subset B_1 \subset B_2 \subset B_3 \subset ...$$
There is a unique trace on the original $B_1$ (the Markov trace, of
some modulus $\tau$)
which extends to a trace on the entire tower.  With this choice of trace,
we may apply the GNS construction and take the closure
of $\cup_i B_i$ to obtain the hyperfinite $II_1$ factor
$B_{\infty}$.  We label the Jones projections
by $B_i=\{B_{i-1}, e_i\}''$ for $i \ge 2$.
Then $\tr(e_i)=\tau$ for all $i$, and the $e_i$'s
obey the relations

$e_ie_j=e_je_i$ for $|i-j|>1$

$e_ie_{i \pm 1}e_i=\tau e_i$

We also have $e_i x e_i = E_{B_{i-2}}(x)$ for $x \in B_{i-1}$.

\subsection{Definitions and Basic Properties}

Throughout this section we will require all of our automorphisms
to be trace-preserving and respect the adjoint operation.

\begin{thm}
Let the tower of $B_i$'s be as above.
Let $a$ be an automorphism of
$B_1$ which leaves $B_0$ invariant.
Then there is a unique (trace-preserving, $^*$-)
automorphism $\alpha$ of $B_{\infty}$
such that $\alpha(e_i)=e_i$, $\alpha(B_i)=B_i$, and $\alpha|_{B_1}=a$.
\end{thm}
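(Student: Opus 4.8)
The plan is to build $\alpha$ level by level, extending the automorphism $a$ of $B_1$ up the Jones tower by forcing it to fix each Jones projection $e_i$. At each stage the new algebra $B_{i+1}$ is generated by $B_i$ and $e_{i+1}$, so once we know $\alpha$ is already defined on $B_i$ and we declare $\alpha(e_{i+1}) = e_{i+1}$, the only thing to check is that this prescription is consistent — i.e.\ that it respects all the relations defining $B_{i+1}$ as the basic construction of $B_{i-1} \subset B_i$. Since $a$ leaves $B_0$ invariant, the conditional expectation $E_{B_0}$ is intertwined by $a$, which is exactly what makes the basic construction functorial at the first step; the inductive step is the same statement one level up.

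\textbf{Key steps.} First I would set up the induction: suppose $\alpha$ has been constructed as a trace-preserving $^*$-automorphism of $B_i$ with $\alpha|_{B_1} = a$, $\alpha(B_j) = B_j$ and $\alpha(e_j) = e_j$ for all $j \le i$. For the inductive step, recall that $B_{i+1} = \{B_i, e_{i+1}\}''$ is the basic construction of $B_{i-1} \subset B_i$, and that $B_{i+1}$ is linearly spanned by elements of the form $x e_{i+1} y$ with $x,y \in B_i$ (using the relations $e_{i+1} x e_{i+1} = E_{B_{i-1}}(x) e_{i+1}$ and $e_i e_{i+1} e_i = \tau e_i$ to reduce words). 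The crucial point is that $\alpha$ restricted to $B_i$ carries $B_{i-1}$ to $B_{i-1}$ and preserves the trace, hence intertwines the conditional expectation: $\alpha \circ E_{B_{i-1}} = E_{B_{i-1}} \circ \alpha$ on $B_i$. By the universal property of the basic construction (or directly: the pair $(\alpha|_{B_i}, e_{i+1})$ satisfies the same relations as $(\mathrm{id}, e_{i+1})$), there is a unique $^*$-homomorphism $\alpha : B_{i+1} \to B_{i+1}$ extending $\alpha|_{B_i}$ with $\alpha(e_{i+1}) = e_{i+1}$; applying the same to $\alpha^{-1}|_{B_i}$ gives an inverse, so $\alpha$ is an automorphism of $B_{i+1}$, and it is trace-preserving because the Markov trace on $B_{i+1}$ is determined by the trace on $B_i$ together with $\tr(e_{i+1} x) = \tau\, \tr(x)$ for $x \in B_i$. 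This produces a coherent automorphism of $\bigcup_i B_i$, which (being trace-preserving) extends by continuity to the GNS completion $B_\infty$. Uniqueness is immediate: any automorphism satisfying the three conditions must agree with $a$ on $B_1$ and with $\alpha$ on each generator $e_i$, hence on the dense subalgebra $\bigcup_i B_i$, hence everywhere.

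\textbf{Main obstacle.} The only substantive point — everything else is bookkeeping — is verifying that the prescription on $B_{i+1}$ is well-defined, i.e.\ that declaring $\alpha(e_{i+1}) = e_{i+1}$ is compatible with $\alpha|_{B_i}$. This reduces to the identity $\alpha \circ E_{B_{i-1}} = E_{B_{i-1}} \circ \alpha$, which holds precisely because $\alpha$ is a trace-preserving automorphism of $B_i$ fixing the subalgebra $B_{i-1}$ setwise — and the base case of this ($a$ preserving $E_{B_0}$) is guaranteed by the hypothesis that $a(B_0) = B_0$ together with $a$ being trace-preserving. One should also check that $\alpha(B_{i+1}) = B_{i+1}$ rather than merely $\alpha(B_{i+1}) \subseteq B_{i+1}$, but this follows by running the construction with $a^{-1}$ in place of $a$ and noting the two are mutually inverse by uniqueness.
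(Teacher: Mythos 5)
Your approach matches the paper's: both build $\alpha$ level by level, declaring $\alpha(e_{i+1}) = e_{i+1}$ and extending by $\alpha(xe_{i+1}y) = \alpha(x)e_{i+1}\alpha(y)$, and both identify the intertwining identity $\alpha \circ E_{B_{i-1}} = E_{B_{i-1}} \circ \alpha$ (coming from $\alpha$ being trace-preserving and leaving $B_{i-1}$ invariant) as the structural engine. You also correctly flag well-definedness of the prescription on $B_{i+1}$ as the ``main obstacle.'' Where your write-up falls short of the paper is in how it discharges that obstacle.

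You reduce well-definedness to the intertwining identity plus an appeal to ``the universal property of the basic construction,'' with the parenthetical remark that $(\alpha|_{B_i}, e_{i+1})$ satisfies the same relations as $(\mathrm{id}, e_{i+1})$. But matching the generating relations only shows the prescription is \emph{multiplicative} on formal words $x e_{i+1} y$ — it does not by itself rule out the possibility that $\sum_k x_k e_{i+1} y_k = 0$ while $\sum_k \alpha(x_k) e_{i+1} \alpha(y_k) \ne 0$, which is precisely the well-definedness issue. The basic construction is not presented by generators and relations in a way that furnishes a universal mapping property for free; some argument is needed. The paper's device is to first verify, at the level of formal linear combinations, that the prescription respects multiplication (using the intertwining identity), adjoints, and the trace. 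Trace-preservation together with adjoint-compatibility makes the formal map a $2$-norm isometry, and the implication $\|z\|_2 = 0 \Rightarrow \|\alpha_{i+1}(z)\|_2 = 0$ (with its converse) is exactly what simultaneously establishes well-definedness and injectivity. This trace-isometry step is the one piece of genuine content your proof elides; once it is supplied, the rest of your argument — mutual inverses via $a^{-1}$, trace-preservation of the Markov extension, continuity to $B_\infty$, uniqueness on the dense union — goes through exactly as in the paper.
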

\begin{proof}
We construct $\alpha$ inductively.
Let $\alpha_1=a$.  Let $\alpha_i$ be a trace-preserving
$^*$-automorphism of
$B_i$, leaving $B_j$ invariant and fixing $e_j$ for $i \le j$.  For
$x, y \in B_i$, 
define $\alpha_{i+1}$ by
$\alpha_{i+1}(xe_{i+1}y)=\alpha_i(x)e_{i+1}\alpha_i(y)$.
It is not immediately clear that this is a defined map; in
principal
we might have $\sum_k x_k e_{i+1} y_k = 0$ with
$\sum_k \alpha_i(x_k) e_{i+1}\alpha(y_k) \neq 0$, but we will
show that this possibility does not arise.

We compute $\alpha_{i+1}( a e_{i+1} b)\alpha_{i+1}(c e_{i+1} d)$.
This is
$$\alpha_i(a)e_{i+1}\alpha_i(b)\alpha_i(c)e_{i+1} \alpha_i(d)$$
$$=\alpha_i(a)E_{B_{i-1}}(\alpha_i(bc))e_{i+1}\alpha_i(d)$$
$\alpha_i$ leaves $B_{i-1}$ invariant, and therefore commutes with
$E_{B_{i-1}}$, so this is
$$\alpha_i(a)\alpha_i(E_{B_{i-1}}(bc))e_{i+1}\alpha_i(d)$$
$$=\alpha_{i+1}(aE_{B_{i-1}}(bc)e_{i+1}d)$$
$$=\alpha_{i+1}(a e_{i+1} b c e_{i+1} d)$$
Therefore $\alpha_{i+1}$ is a homomorphism on elements of the
form
$a e_{i+1} b$; $B_i e_{i+1} B_i = B_{i+1}$~\cite{JS}, so
$\alpha$ is a homomorphism on all of $B_{i+1}$.

$\alpha_{i+1}$
clearly sends adjoints to adjoints.  Since $\alpha_i$ is a
trace-preserving homomorphism,
we have $$\tr(xe_{i+1}y)=\tr(e_{i+1})\tr(yx)=\tr(e_{i+1})\tr(\alpha_i(yx))=
\tr(\alpha_i(x)e_{i+1}\alpha_i(y))$$
by the properties of the Jones projections.  So $\alpha_{i+1}$
is trace-preserving as well.  A trace-preserving $^*$-homomorphism
is an isometry.
This tells us that $\alpha_{i+1}$ is a defined map
on $B_{i+1}$,
since $||x||_2 = 0 \rightarrow ||\alpha_{i+1}(x)||_2=0$,
and that it is injective, since
$||\alpha_{i+1}(x)||_2=0 \rightarrow ||x||_2=0$.

Therefore
$\alpha_{i+1}$ is a (trace-preserving,
$^*$-) automorphism of $B_{i+1}$,
which leaves $B_j$ invariant and fixes $e_j$ for $j \le i+1$.

For $x \in B_i$, 
$\alpha_{i+1}(x)e_{i+1}=\alpha_{i+1}(xe_{i+1})=\alpha_i(x)e_{i+1}$.
Since $\alpha_i(x)$ and $\alpha_{i+1}(x)$ are in $B_i$, by properties of
the Jones projections this means
$\alpha_{i+1}(x)=\alpha_i(x)$.  In other words $\alpha_{i+1}|B_i=\alpha_i$.
So $\alpha_{i+1}$ is an extension of $\alpha_i$, and
we can define $\alpha_{\infty}$ on $\cup_i B_i$ by
$\alpha_{\infty}(x)=\alpha_j(x)$ for $x \in B_j$.
All the $\alpha_i$'s are norm-1, so $\alpha_{\infty}$ is bounded in
norm.  This means it
extends to the closure of $\cup_i B_i$, giving an automorphism $\alpha$
of $B_{\infty}$.

$\alpha|_{B_i}=\alpha_i$, so $\alpha|_{B_1}=\alpha_1=a$,
and $\alpha(e_i)=\alpha_i(e_i)=e_i$.

Uniqueness of $\alpha$
is immediate since $B_1$ and the $e_i$'s generate $B_{\infty}$.
\end{proof}

Alternatively, if $\alpha$ is a automorphism of $B_{\infty}$
which leaves the $B_i$'s invariant and fixes the $e_i$'s, then it is equal to
the extension of $\alpha|_{B_1}$ to $B_{\infty}$ as above.

These conditions are a bit stronger than necessary.  If
$\alpha$ fixes the $e_i$'s and leaves $B_1$ invariant,
then it also leaves $\{e_2\}' \cap B_1  = B_0$ and
$\{B_1, e_2, ... , e_i\}''=B_i$ invariant.

Maps of this form may be said to be compatible with the tower.

\begin{dfn}
If $\alpha \in Aut(B_{\infty})$ fixes $e_i$ and leaves
$B_i$ invariant for all $i$, then $\alpha$ is a
\begin{bf}compatible\end{bf}
automorphism.
\end{dfn}

In~\cite{Loi}, the author examined automorphisms of a $II_1$
factor $M_1$ which send a subfactor $M_0 \subset M_1$ to itself.
Such an automorphism $\alpha$
extends to the Jones tower of $M_0 \subset M_1$ by taking $\alpha(e_i)=e_i$,
and restricts to the tower of relative commutants $\{M_0' \cap M_k\}$.
The above definition of compatible automorphisms may be thought
of as a finite-dimensional version of Loi's construction.

\subsection{The Canonical Shift}

The Bratteli diagram of an inclusion of finite-dimensional Von
Neumann algebras is a graphical depiction of the inclusion matrix
(see~\cite{JS}).
For $0 \le i \le j$, both $B_i' \cap B_j$ and $B_{i+2}' \cap B_{j+2}$
may be implemented as the algebra of length $(j-i)$ paths
on the Bratteli diagram of
$B_0 \subset B_1$~\cite{JS}.  It follows that these two algebras are isomorphic.
The isomorphism is the canonical shift, and we may construct it
as follows.

\begin{lem}

Let
$$T_{ij} = \tau^{-(j-i)/2}e_{j+2}e_{j+1}...e_{i+3}e_{i+2}$$
For all $x \in B_i' \cap B_j$, there is a unique $y \in B_{i+2}'
\cap B_{j+2}$ such that $T_{ij} y T_{ij}^* = e_{j+2} x $,
and the map $\Theta$ defined by $y = \Theta(x)$ is a
$^*$-isomorphism from $B_i' \cap B_j$ to $B_{i+2}' \cap B_{j+2}$.
\end{lem}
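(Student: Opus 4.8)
The plan is to verify directly that $\Theta$ is well-defined, then check it is a $^*$-homomorphism, and finally produce an inverse. Throughout I will use the Temperley--Lieb relations $e_ie_j = e_je_i$ for $|i-j|>1$, $e_ie_{i\pm1}e_i = \tau e_i$, and the conditional-expectation identity $e_i x e_i = E_{B_{i-2}}(x)e_i$ for $x \in B_{i-1}$, all recalled earlier in the excerpt. First I would observe that $T_{ij}T_{ij}^* = \tau^{-(j-i)}\,e_{j+2}e_{j+1}\cdots e_{i+2}\,e_{i+2}\cdots e_{j+1}e_{j+2}$, which collapses by repeated application of $e_ke_{k\pm1}e_k = \tau e_k$ to $e_{j+2}$: peeling from the inside, $e_{i+3}e_{i+2}e_{i+3} = \tau e_{i+3}$, then $e_{i+4}e_{i+3}e_{i+4}=\tau e_{i+4}$, and so on, so $T_{ij}T_{ij}^* = e_{j+2}$. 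Hence $T_{ij}$ is (up to the projection $e_{j+2}$) a partial isometry, which is what makes the formula $T_{ij}\,y\,T_{ij}^* = e_{j+2}x$ have a chance of uniquely determining $y$.

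Next, for existence of $y$: I would define $\Theta(x)$ by an explicit formula, namely $\Theta(x) = \tau^{-(j-i)}\,T_{ij}^*\, x\, T_{ij}$ — more precisely, since $x$ commutes with everything below $B_{i+2}$, one checks that $T_{ij}^* x T_{ij}$ lands in $B_{i+2}'\cap B_{j+2}$ and that conjugating it back by $T_{ij}$ recovers $e_{j+2}x$. The commutation claim $\Theta(x) \in B_{i+2}'\cap B_{j+2}$ follows because $T_{ij}$ only involves $e_{i+2},\dots,e_{j+2}$, so $T_{ij}^*\,x\,T_{ij}$ lies in the algebra generated by those projections together with $x$; since $x \in B_i'\cap B_j$ and the $e_k$ for $k \ge i+2$ commute with $B_i$ (as $B_i \subset B_{k-2}$), one gets $\Theta(x) \in B_{i+2}'$, and an index count on the $e_k$'s places it in $B_{j+2}$. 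Uniqueness of $y$ is then immediate: if $T_{ij}\,y\,T_{ij}^* = T_{ij}\,y'\,T_{ij}^*$ with $y,y' \in B_{i+2}'\cap B_{j+2}$, multiply on the left by $T_{ij}^*$ and on the right by $T_{ij}$ and use $T_{ij}^*T_{ij} \in B_{i+2}'\cap B_{j+2}$ acting as the identity on that corner (this is the partial-isometry computation again, now for $T_{ij}^*T_{ij}$, which equals $\tau^{-(j-i)}$ times a product collapsing to the identity on the relevant compression) to conclude $y = y'$.

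That $\Theta$ is a $^*$-homomorphism is then a formal consequence of the conjugation formula together with $T_{ij}T_{ij}^* = e_{j+2}$: for $x_1, x_2 \in B_i'\cap B_j$ we have $T_{ij}\,\Theta(x_1)\Theta(x_2)\,T_{ij}^* = (T_{ij}\Theta(x_1)T_{ij}^*)(T_{ij}\Theta(x_2)T_{ij}^*)/e_{j+2}$-type bookkeeping, i.e.\ $e_{j+2}x_1 e_{j+2}x_2 = e_{j+2}x_1x_2$ since $x_1 \in B_j$ commutes with $e_{j+2}$ — wait, more carefully, $x_1$ commutes with $e_{j+2}$ because $B_j \subset B_{j+1}$ and $e_{j+2}$ commutes with $B_j$; hence $e_{j+2}x_1e_{j+2}x_2 = x_1 e_{j+2}x_2 = x_1 x_2 e_{j+2}$ — wait $x_2$ also commutes with $e_{j+2}$ — so this is $e_{j+2}x_1x_2$, giving $\Theta(x_1)\Theta(x_2) = \Theta(x_1x_2)$ by uniqueness. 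Adjoint-compatibility is clear from the formula since $T_{ij}^{**} = T_{ij}$ and the $e_k$ are self-adjoint. Finally, $\Theta$ is injective by the same isometry-type argument, and surjective because one can run the construction in reverse: given $y \in B_{i+2}'\cap B_{j+2}$, the element $x$ with $e_{j+2}x = T_{ij}yT_{ij}^*$ exists since the right side is supported under $e_{j+2}$ and lies in $B_j$ (an index count again), and it commutes with $B_i$. The main obstacle is the index-counting bookkeeping that pins down exactly which $B_k'\cap B_\ell$ each conjugated element lands in; this is where one must be careful, and where the hypothesis that $B_0 \subset B_1$ is connected (so that the relevant relative commutants have the path-algebra description cited from~\cite{JS}) is doing real work.
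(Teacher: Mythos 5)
Your strategy of producing $\Theta$ by an explicit conjugation formula does not work, and the gap is in the claim that $T_{ij}^*\,x\,T_{ij}$ lands in $B_{i+2}'\cap B_{j+2}$. What you actually justify is only membership in $B_i'$: since $T_{ij}$ is built from $e_k$ with $k\geq i+2$ and these commute with $B_i$, and $x$ commutes with $B_i$, the conjugate commutes with $B_i$. But it need not commute with $B_{i+1}$ or $B_{i+2}$. The $i=j$ case already kills it: then $T_{ii}=e_{i+2}$, $x\in Z(B_i)$, and your formula gives $e_{i+2}\,x\,e_{i+2}=x\,e_{i+2}$ (using that $e_{i+2}$ commutes with $B_i$), which is not central in $B_{i+2}$ --- taking $B_0=\mathbb{C}$, $x=1$, the output is the projection $e_2\neq 1$, while $Z(B_2)=\mathbb{C}1$. (Your normalization $\tau^{-(j-i)}$ is also off, since $T_{ij}(T_{ij}^*xT_{ij})T_{ij}^*=e_{j+2}xe_{j+2}=e_{j+2}x$ already, with no extra power of $\tau$; but that is cosmetic compared to the commutant issue.) Relatedly, the ``uniqueness is immediate'' step leans on $T_{ij}^*T_{ij}=e_{i+2}$ ``acting as the identity on that corner,'' which is not literally true: you get $(y-y')e_{i+2}=0$ for $y-y'\in B_{i+2}'\cap B_{j+2}$, and one still needs to know that left multiplication by $e_{i+2}$ is injective on that relative commutant; this is exactly the trace estimate the paper establishes (via $E_{B_{i+2}'\cap B_{j+2}}(e_{i+2})=E_{Z(B_{i+2})}(e_{i+2})$ being bounded below), not a formal consequence of the partial-isometry identities.

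The viable route is the one you sketch at the very end under ``surjective,'' and it is the one the paper takes: define the map $\rho_{ij}$ in the \emph{opposite} direction, from $B_{i+2}'\cap B_{j+2}$ to $B_i'\cap B_j$, by $T_{ij}\,y\,T_{ij}^*=e_{j+2}\,\rho_{ij}(y)$. This direction is the one that is honestly well-defined, because $T_{ij}\,y\,T_{ij}^*\in e_{j+2}B_{j+2}e_{j+2}=B_j\,e_{j+2}$ is a genuine property of the basic construction $B_j\subset B_{j+1}\subset B_{j+2}$, and the commutation with $B_i$ is then easy. One checks $\rho_{ij}$ is a $^*$-homomorphism, proves it is injective via the two-sided trace bound coming from the central support of $e_{i+2}$, and then uses the abstract isomorphism $B_i'\cap B_j\cong B_{i+2}'\cap B_{j+2}$ (the path-algebra model from~\cite{JS}) to conclude $\rho_{ij}$ is onto, finally setting $\Theta_{ij}=\rho_{ij}^{-1}$. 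Your proposal inverts the order of construction and thereby asserts a containment that is false; refocusing on $\rho_{ij}$ and supplying the injectivity estimate would repair it.
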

\begin{proof}

We have

$$T_{ij}^*T_{ij}=\tau^{-(j-i)}e_{i+2}e_{i+3}...e_{j+1}e_{j+2}e_{j+1}...e_{i+2}$$
which
is equal to $e_{i+2}$ by the properties of the Jones projections.

Then let $x$ be an element of $B_{i+2}' \cap B_{j+2}$.
$T_{ij}xT_{ij}^*$ is an element of $e_{j+2}B_{j+2}e_{j+2}$.  From the
properties of the basic construction, it follows that there is
a unique $y$ in $B_k$ such that
$T_{ij}xT_{ij}^*=e_{j+2}y$.  Since $T_k$, $x$, and $e_{j+2}$
all commute with $B_i$, $y$ must do so as well.

Now we may define $\rho_{ij}:B_{i+2}' \cap B_{j+2} \rightarrow
B_i' \cap B_j$ by
$T_{ij}xT_{ij}^*=e_{j+2} \rho_{ij}(x)$.  It follows immediately
that $\rho(x^*)=\rho(x)^*$.

For $x, y \in B_{i+2}' \cap B_{j+2}$,
we then have
$$T_{ij}xT_{ij}^*T_{ij}yT_{ij}^*=e_{j+2} \rho_{ij}(x) e_{j+2}\rho_{ij}(y)$$
Since $\rho_{ij}(x)$ is in $B_j$, it commutes with $e_{g+2}$, and this is
$e_{g+2}\rho_{ij}(x)\rho_j(y)$.

We also have $T_{ij} x T_{ij}^* T_{ij} y T_{ij}^* = T_{ij}xe_{i+2}yT_{ij}^*$.
Since $x$ commutes with $e_{i+2}$ and $T_{ij}e_i=T_{ij}$,
this implies that $T_{ij}xyT_{ij}^*=e_{j+2}\rho_{ij}(x)\rho_{ij}(y)$, so
$e_{j+2}\rho_{ij}(xy)=e_{j+2}\rho_k(x)\rho_k(y)$.  $e_{j+2}a=0$
for $a \in B_j$ only if $a=0$, so this means that $\rho_{ij}$ is a homomorphism.

Now we investigate the norm of $\rho_{ij}$.
Let $x$ be an element of $B_{i+2}' \cap B_{j+2}$.  We have
$\tr(e_{j+2}\rho_{ij}(x))=\tau \tr(\rho_{ij}(x))$.  This is the
same as $\tr(T_{ij}xT_{ij}^*)=\tr(xT_{ij}^*T_{ij})=\tr(x e_{i+2})$.
There is
a trace-preserving conditional expectation onto $B_{i+2}' \cap B_{j+2}$,
so we find
$$\tr(x e_{i+2}) = \tr(E_{B_{i+2}' \cap B_{j+2}}(x e_{i+2}))=
\tr(x E_{B_{i+2}' \cap B_{j+2}}(e_{i+2}))$$

The quadrilateral $$\commsq{Z(B_{i+2}) }{B_{i+2}}
{B_{i+2}' \cap B_{j+2}}{B_{j+2}}$$
commutes, since the conditional expectation onto $B_{i+2}$
preserves $B_{i+2}'$.  So $E_{B_{i+2}' \cap B_{j+2}}(e_{i+2})$
$=E_{Z(B_{i+2})}(e_{i+2})$.

\begin{dfn}
The \begin{bf}central support\end{bf} of a projection $p$
in a finite Von Neumann algebra $A$ is the smallest central
projection $q \in Z(A)$ with $pq=p$.  If $q=1$, we say
that $p$ has full central support.
\end{dfn}

$e_{i+2}<1$, since it is a projection.
Also $e_{i+2}$ has full central support
in $B_2$~\cite{JS}.  Since $Z(B_2)$ is finite-dimensional, this means
there is some $\lambda>0$ with
$\lambda 1< E_{Z(B_{i+2})}(e_{i+2})<1$.  Therefore
$\lambda \tr(x)<\tr(x E_{Z(B_{i+2})}(e_{i+2}))<\tr(x)$,
implying $\tau^{-1}\lambda \tr(x)<\tr(\rho_{ij}(x))<\tau^{-1}\tr(x)$.
We then find $c>0$ with $c<\tau^{-1}\lambda,c^{-1}>\tau^{-1}$.

$\rho_{ij}$ is a $^*$-homomorphism, so for any $x \in B_{i+2}' \cap
B_{j+2}$
we have
$$||\rho_{ij}(x)||_2^2 = \tr(\rho_{ij}(x)^*\rho(x))=\tr(\rho(x^*x))$$
The above inequality on trace then implies
$$c||x||_2^2<||\rho_{ij}(x)||_2^2<c^{-1}||x||_2^2$$

$\rho_{ij}$ is thus an injective homomorphism.
$B_{i+2}' \cap B_{j+2}$ is isomorphic to $B_i' \cap B_j$, so
in fact $\rho_{ij}$ is an isomorphism.
It follows that
there exists $\Theta_{ij}:B_i' \cap B_j \rightarrow B_{i+2}' \cap B_{j+2}$
such that $\rho_{ij}\Theta_{ij}$ is the identity, and $\Theta_{ij}$
is also a $^*$-isomorphism.

By the definition of $\rho_{ij}$,
$\Theta_{ij}(x)$ is then the unique element of $B_{i+2}' \cap B_{j+2}$
obeying the relation
$T_{ij} \Theta_{ij}(x) T_{ij}^* = e_{j+2}x$ for $x \in B_i' \cap B_j$.
\end{proof}

Using the same constant $c$ as above, we must have
 $$c||x||^2_2<||\Theta_{ij}(x)||^2_2<c^{-1}||x||^2_2$$ 
 since $\Theta_{ij} \rho_{ij}$ is the identity.
This map $\Theta_{ij}$ is the canonical shift on $B_i' \cap B_j$.

\subsection{The Iterated Shift}

We now recall some results from~\cite{JS} and ~\cite{GHJ}, based on
Perron-Frobenius theory.

Let $\mathbf{s^{(i)}}$
be the size vector for $B_i$, i.e. the $x$th minimal
central projection $p_x \in B_i$ has $p_x B_i = M_{s^{(i)}_x}(\mathbb{C})$.
Then as $n$ goes to infinity,
$\tau^{2n} \mathbf{s^{(i+2n)}}$ converges to some vector $\mathbf{v}$,
which is a Perron-Frobenius eigenvector for
the inclusion matrix of $B_i \subset B_{i+2}$.
Every component of each $\mathbf{s^{(k)}}$ is positive, and this
is true of $\mathbf{v}$ as well,
 so for all $x$ labeling a central projection of $B_i$,
 the set
$\{\tau^{n}(s^{(i+2n)})_x|n \in \mathbb{N})\}$ is bounded and bounded
away from zero.  Since $Z(B_i)$ is finite-dimensional, there is
some constant $c>0$ with
$c < \tau^{n}(s^{(i+2n)})_x < c^{-1}$ for all $n \in \mathbb{N}$,
$1 \le x \le {\rm dim}Z(B_i)$.

Likewise, let $\mathbf{t^{(j)}}$ be the trace vector for $B_j$,
with $t^{(j)}_y$ equal to the trace of a minimal projection
in $p_y B_j$.  From the Markov property of the trace
on $B_j \subset B_{j+1}$, we have $\mathbf{t^{(j+2n)}} =
\tau^{n} \mathbf{t^{(j)}} $~\cite{GHJ}.
Again, finite dimensionality of $Z(B_j)$ implies
that there is $d>0$ with
$d < \tau^{-n} t^{(j)}_y < d^{-1}$ for all $n \in \mathbb{N}$,
$1 \le y \le {\rm dim} Z(B_j)$.

This implies that the traces of certain projections in the
tower of relative commutants are bounded away from zero.

\begin{lem}
Choose $0 \le i \le j$.
There exists $\epsilon>0$ such that for all $n \ge 0$ and $p>0$ a projection
in $B_{i+2n}' \cap B_{j+2n}$, $\tr p \ge \epsilon$.
\end{lem}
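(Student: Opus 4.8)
The plan is to reduce the assertion to a uniform lower bound on the trace of a \emph{minimal} projection of $C_n := B_{i+2n}' \cap B_{j+2n}$, and then to evaluate that trace in the path model so that it is pinned down by the two Perron--Frobenius estimates just recalled. Since $C_n$ is finite dimensional, any nonzero projection $p \in C_n$ dominates a minimal projection $f$ of $C_n$, and then $\tr p \ge \tr f$; so it suffices to produce $\epsilon > 0$, independent of $n$, with $\tr f \ge \epsilon$ for every minimal $f \in C_n$.

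The crux is the position of such an $f$ inside $B_{j+2n}$. Realizing $C_n$ as the algebra of length-$(j-i)$ paths on the Bratteli diagram of $B_0 \subset B_1$ (as in~\cite{JS}), its matrix blocks are indexed by pairs $(v,w)$, with $v$ a vertex at level $i+2n$ (equivalently, a minimal central projection of $B_{i+2n}$) and $w$ a vertex at level $j+2n$, joined by at least one path of length $j-i$; a minimal projection $f$ in the block $(v,w)$ corresponds to a single such path. Since $Z(B_{j+2n})$ commutes with all of $B_{j+2n}$, in particular with $B_{i+2n}$ and hence with $C_n$, we have $Z(B_{j+2n}) \subseteq Z(C_n)$, so $f$ lies under a single minimal central projection $z_w$ of $B_{j+2n}$. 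The block $(v,w)$ of $C_n$ includes into the factor $z_w B_{j+2n} = M_{s^{(j+2n)}_w}(\mathbb{C})$ with multiplicity $s^{(i+2n)}_v$, so the rank of $f$ in $z_w B_{j+2n}$ is $s^{(i+2n)}_v$ and therefore
\[
\tr f \;=\; s^{(i+2n)}_v \cdot t^{(j+2n)}_w .
\]

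It then remains to insert the two bounds established above. The size-vector estimate gives $\tau^{n} s^{(i+2n)}_v > c$, i.e. $s^{(i+2n)}_v > c\,\tau^{-n}$; the Markov relation $\mathbf{t^{(j+2n)}} = \tau^{n}\mathbf{t^{(j)}}$, together with positivity of the finitely many entries of $\mathbf{t^{(j)}}$, gives $t^{(j+2n)}_w = \tau^{n} t^{(j)}_w \ge \tau^{n} d$ with $d := \min_y t^{(j)}_y > 0$. Hence $\tr f > (c\,\tau^{-n})(\tau^{n} d) = cd$, and $\epsilon = cd$ works for every $n$ and every $p$. The one delicate point is the middle paragraph: extracting from the structure theory of $B_i' \cap B_j$ that a minimal projection of $C_n$ sits under a single central summand of $B_{j+2n}$ with rank exactly $s^{(i+2n)}_v$. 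Once that is in hand the trace collapses to a product of two quantities, each controlled by a Perron--Frobenius eigenvector, and the rest is routine. Note that the crude $L^2$-distortion bound for the canonical shift $\Theta_{ij}$ does not suffice on its own, since over $n$ iterations it would only give a factor $c^{n} \to 0$; the argument must go through the explicit path model.
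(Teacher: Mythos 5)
Your argument is correct and follows the paper's proof essentially verbatim: express the trace of a minimal projection of $B_{i+2n}' \cap B_{j+2n}$ as $s^{(i+2n)}_v \cdot t^{(j+2n)}_w$ via the path model, then bound each factor using the two Perron--Frobenius estimates recorded just before the lemma, getting $\tr p > c\,\tau^{-n}\cdot d\,\tau^{n} = cd$. The paper leaves the trace factorization to a citation of~\cite{JS} and tacitly restricts to minimal projections, both of which you spell out; your closing remark that the crude $L^2$-distortion constant for the shift degrades under iteration and cannot replace the path-model computation is a correct and worthwhile observation.
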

\begin{proof}

Let $p$ be a minimal projection in $B_{i+2n}' \cap B_{j+2n}$.

From the path algebra model of \cite{JS}, the trace of $p$ is equal
to $s^{(i+2n)}_x t^{(j+2n)}_y$ for some $1 \le x \le
{\rm dim} Z(B_i)$, $1 \le y \le {\rm dim} Z(B_j)$.

The above Perron-Froebenius argument implies that there are constants
$c>0$, $d>0$ such that $c < \tau^{-n}
s^{(i+2n)}_x$, $d < \tau^n t^{(i+2n)}_y$ for all
$x$, $y$, $n$.
This means that $\epsilon = cd < \tr p$.
\end{proof}

The {\bf iterated shift} $\Theta_{ij}^n$ is defined as
$$\Theta_{ij}^n =
\Theta_{i+2(n-1),j+2(n-1)} \Theta_{i+2(n-2),j+2(n-2)}...\Theta_{i+2,j+2}
\Theta_{ij}$$
This is a $^*$-isomorphism from $B_i' \cap B_j$ to
$B_{i+2n}' \cap B_{j+_2n}$.

\begin{thm}
For all $i,j$ there exists $c>0$ such that for all $x \in
B_i' \cap B_j$ and all $n>0$ we have
$$c ||x||_2 \le ||\Theta^n_{ij}(x)||_2 \le c^{-1}||x||$$
\end{thm}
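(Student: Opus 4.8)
The plan is to prove the estimate \emph{without} composing the single-step distortion bounds of the preceding theorem on $\Theta_{ij}$: those constants would multiply, giving a bound of order $c_0^{-n}$ that is useless as $n\to\infty$. Instead I will compare the $L^2$-norms on $A:=B_i'\cap B_j$ and $B^{(n)}:=B_{i+2n}'\cap B_{j+2n}$ directly, using the uniform lower bound $\epsilon>0$ on traces of projections in $B^{(n)}$ furnished by the lemma just proved. The point is that $\Theta^n_{ij}$ is a $^*$-isomorphism of the finite-dimensional $C^*$-algebras $A$ and $B^{(n)}$, so the only source of distortion of $\|\cdot\|_2$ is the ratio of the trace-weights attached to corresponding matrix blocks, and these weights are controlled uniformly in $n$.

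First I would fix notation. Decompose $A=\bigoplus_{\alpha=1}^m A_\alpha$ with $A_\alpha\cong M_{d_\alpha}(\mathbb{C})$, and let $w_\alpha=\tr(q_\alpha)>0$ for $q_\alpha$ a minimal projection of $A_\alpha$, so that $\tr$ restricted to $A_\alpha$ equals $w_\alpha\operatorname{Tr}_{d_\alpha}$, where $\operatorname{Tr}_{d_\alpha}$ is the unnormalized matrix trace. Since $A$ is fixed, the finitely many numbers $w_\alpha$ are fixed constants, with $0<w_-\le w_\alpha\le 1$, where $w_-:=\min_\alpha w_\alpha$ and the upper bound holds because $q_\alpha\le 1$. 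Decompose $B^{(n)}=\bigoplus_\beta B^{(n)}_\beta$ similarly, with weights $w^{(n)}_\beta$; here the crucial input is that $w^{(n)}_\beta$ is the trace of a minimal projection of $B_{i+2n}'\cap B_{j+2n}$, so $\epsilon\le w^{(n)}_\beta\le 1$ for the same $\epsilon$ as in the lemma, uniformly in $n$ and $\beta$.

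Next I would record the block structure of the isomorphism. A $^*$-isomorphism carries minimal central projections to minimal central projections and preserves matrix-block dimensions, so $\Theta^n_{ij}$ induces a bijection $\sigma_n$ of $\{1,\dots,m\}$ with $\Theta^n_{ij}(A_\alpha)=B^{(n)}_{\sigma_n(\alpha)}$, and $\Theta^n_{ij}|_{A_\alpha}$ is a unital $^*$-isomorphism between two copies of $M_{d_\alpha}(\mathbb{C})$, hence inner, hence preserves $\operatorname{Tr}_{d_\alpha}$. Writing $x=\sum_\alpha x_\alpha\in A$ and expanding both norms against the block decompositions gives
\[
\|\Theta^n_{ij}(x)\|_2^2 \;=\; \sum_\alpha w^{(n)}_{\sigma_n(\alpha)}\operatorname{Tr}_{d_\alpha}(x_\alpha^*x_\alpha)\;=\;\sum_\alpha \frac{w^{(n)}_{\sigma_n(\alpha)}}{w_\alpha}\,w_\alpha\operatorname{Tr}_{d_\alpha}(x_\alpha^*x_\alpha),
\]
and since each ratio $w^{(n)}_{\sigma_n(\alpha)}/w_\alpha$ lies in $[\epsilon,\,w_-^{-1}]$, setting $c=\sqrt{\min(\epsilon,w_-)}\le 1$ yields $c^2\|x\|_2^2\le\|\Theta^n_{ij}(x)\|_2^2\le c^{-2}\|x\|_2^2$, i.e.\ the asserted inequality with a constant $c$ depending only on $i$ and $j$ (through $A$ and $\epsilon$) and not on $n$.

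The only real obstacle is conceptual rather than technical: one must resist iterating the per-step estimate and instead extract the \emph{uniform} control of the target trace-weights from the Perron--Frobenius lemma. Once that is in place the argument is bookkeeping; the two small points to be careful about are that the unnormalized traces really are preserved block-by-block (which uses that a $^*$-automorphism of $M_d(\mathbb{C})$ is inner) and that the upper bounds $w^{(n)}_\beta\le 1$ and $w_\alpha\le 1$ are both recorded, so that the lower and upper distortion estimates can be packaged into a single constant $c$.
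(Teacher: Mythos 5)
Your proof is correct and is essentially the paper's own argument in more explicit form: both rest on Lemma~2.2's uniform lower bound $\epsilon$ on traces of minimal projections, and both observe that since $\Theta^n_{ij}$ is a $^*$-isomorphism, the only possible distortion of $\|\cdot\|_2$ comes from the ratio of trace-weights attached to corresponding minimal projections, which is therefore controlled uniformly in~$n$. The paper phrases the same computation via positive linear combinations of minimal projections (and applies it to $x^*x$) rather than your block decomposition, and because Lemma~2.2 already covers $n=0$ it can use $\epsilon$ in place of your auxiliary constant $w_-$, but the content is identical.
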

\begin{proof}
Let $p$ be a minimal projection in $B_i' \cap B_j$.
By the previous lemma we have $\epsilon>0$ such that
$\epsilon \le \tr p \le 1$, $\epsilon \le \tr \Theta_{ij}^n(p) \le 1$
for all $n$.
It follows that
$$\epsilon^2 \tr p \le \tr \Theta_{ij}^n(p) \le \epsilon^{-2} \tr p$$

Any positive element is a linear combination of minimal projections,
so this inequality holds for all $a>0$ in $B_i' \cap B_j$.  Applying
this to $x^*x$ we get
$$\epsilon^2 \tr x^*x \le \tr\Theta_{ij}^n(x)^*\Theta_{ij}^n(x) \le
\epsilon^{-2} \tr x^*x$$
since $\Theta_{ij}^n$ is a $^*$-isomorphism.  This gives
$$\epsilon||x||_2 \le ||\Theta_{ij}^n(x)||_2 \le \epsilon^{-1}||x||_2$$
\end{proof}

\subsection{The Iterated Shift and Central Sequences}

Let $\omega$ be a free ultrafilter of the natural numbers.
If $R$ is the hyperfinite $II_1$ factor, we define the ultrapower
$R^{\omega}$ as the set of
bounded functions from the natural numbers to $R$, modulo
those which approach zero strongly along the ultrafilter.
Convergence along the ultrafilter is defined using
the ultralimit(see ~\cite{EK}): for a sequence of points
$(x_i)$ in some topological space,
we say that $\lim_{i \rightarrow \omega} (x_i) = L$
if for any neighborhood $N$ of $L$
there is a set $S \subset \mathbb{N}$ in the ultrafilter such that
$x_i \in N$ for all $i \in S$.  

$R$ embeds in $R^{\omega}$ as constant sequences.  The
central sequence algebra $R_{\omega}$ is then defined as the subalgebra
$R' \cap R^{\omega}$,
and both $R^{\omega}$ and $R_{\omega}$ are nonseparable $II_1$
factors~\cite{EK}.
If $x = (x_i)$ is an
element of $R^{\omega}$, then $\tr(x)$ is
$\lim_{i \rightarrow \omega} \tr(x_i)$.

Take $0 \le i \le j$.
Theorem 2.2
gives a map from $B_i' \cap B_j$ into
the central sequence algebra $(B_{\infty})_{\omega}$.

\begin{lem}
Let $\tilde{\Theta}$ from $B_i' \cap B_j$ to
$l^{\infty}(B_{\infty})$ defined by
$\tilde{\Theta}(x) = ( \Theta_{ij}^n(x) )$.
Then $\tilde{\Theta}$ is an injective homomorphism
from $B_i' \cap B_j$ into $(B_{\infty})_{\omega}$.
\end{lem}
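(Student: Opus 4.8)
The plan is to verify three things in turn: that $\tilde\Theta$ genuinely maps into $l^\infty(B_\infty)$ and that the class of $\tilde\Theta(x)$ in the ultrapower lies in the central sequence algebra $(B_\infty)_\omega = R' \cap R^\omega$ (writing $R = B_\infty$); that $\tilde\Theta$ is a $^*$-homomorphism; and that it is injective. For the first boundedness point, each $\Theta_{ij}^n$ is a $^*$-isomorphism of finite-dimensional $C^*$-algebras, hence isometric for the operator norm, so $\|\Theta_{ij}^n(x)\|_\infty = \|x\|_\infty$ for every $n$ and the sequence $(\Theta_{ij}^n(x))_n$ is bounded; it thus defines an element of $l^\infty(B_\infty)$ and, after passing to its class, of $R^\omega$. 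Homomorphy is then automatic: products, adjoints and linear combinations in $l^\infty(B_\infty)$ are computed coordinatewise, each $\Theta_{ij}^n$ is a $^*$-homomorphism, and the quotient map onto $R^\omega$ respects the algebraic and $^*$-structure; note also $\Theta_{ij}^n(1) = 1$, so $\tilde\Theta$ is in fact unital.

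The substantive step is to show that the class of $\tilde\Theta(x)$ lies in $R_\omega$. Here I would use that $\Theta_{ij}^n(x) \in B_{i+2n}' \cap B_{j+2n} \subseteq B_{i+2n}'$. Given $b$ in the $^*$-subalgebra $\bigcup_k B_k$, say $b \in B_k$, then $b \in B_{i+2n}$ and hence $[\Theta_{ij}^n(x), b] = 0$ for every $n$ with $i + 2n \ge k$, i.e. for all sufficiently large $n$; so $\tilde\Theta(x)$ commutes with $b$ in $R^\omega$ for every $b \in \bigcup_k B_k$. For an arbitrary $a \in R$ and any such $b$, the estimates $\|\tilde\Theta(x)\,c\|_2 \le \|x\|_\infty \|c\|_2$ and $\|c\,\tilde\Theta(x)\|_2 \le \|x\|_\infty \|c\|_2$ (valid since $\|\tilde\Theta(x)\|_\infty = \|x\|_\infty$) give $\|[\tilde\Theta(x), a]\|_2 = \|[\tilde\Theta(x), a - b]\|_2 \le 2\|x\|_\infty \|a - b\|_2$. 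Since $\bigcup_k B_k$ is $\|\cdot\|_2$-dense in $R$, the right-hand side can be made arbitrarily small, so $[\tilde\Theta(x), a] = 0$, and therefore $\tilde\Theta(x) \in R' \cap R^\omega = R_\omega$.

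For injectivity, suppose $\tilde\Theta(x) = 0$ in $R^\omega$, which means $\lim_{n \to \omega} \|\Theta_{ij}^n(x)\|_2 = 0$. By Theorem 2.2 there is a constant $c > 0$ with $\|\Theta_{ij}^n(x)\|_2 \ge c\|x\|_2$ for all $n$, so the ultralimit is at least $c\|x\|_2$; hence $\|x\|_2 = 0$, and since the trace is faithful, $x = 0$. Thus $\tilde\Theta$ is injective.

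The only step that is not purely formal is the second paragraph: one must locate the $\Theta_{ij}^n(x)$ inside relative commutants of subalgebras of $B_\infty$ whose union is the $\|\cdot\|_2$-dense algebra $\bigcup_k B_k$, and then promote asymptotic commutation from that dense subalgebra to all of $R$. Everything quantitative — both the centrality estimate and, crucially, the absence of a kernel — rests on the uniform two-sided bound of Theorem 2.2, so invoking that theorem is the key move.
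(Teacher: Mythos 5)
Your proof is correct and follows essentially the same route as the paper: boundedness places $\tilde\Theta(x)$ in $R^\omega$, the inclusion $\Theta_{ij}^n(x)\in B_{i+2n}'\cap B_{j+2n}$ plus density of $\bigcup_k B_k$ gives centrality, coordinatewise homomorphy is automatic, and the lower bound of Theorem~2.2 gives injectivity. Your write-up is slightly tighter in one spot — you get the $\infty$-norm bound directly from the fact that a $^*$-isomorphism of $C^*$-algebras is isometric, whereas the paper loosely cites the 2-norm estimate of Theorem~2.2 for this — but that is a cosmetic difference, not a different argument.
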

\begin{proof}
From theorem 2.2, the sequence
$\tilde{\Theta}(x)$ is bounded in $\infty$-norm,
so it defines
an element of $B_{\infty}^{\omega}$.

From the definition of the iterated shift $\Theta_{ij}^n$,
this element 
asymptotically commutes with all the $B_i$'s, i.e.
$\lim_{n \rightarrow \omega}||[\Theta_{ij}^n(x), y]||_2=0$
for $y$ in any $B_i$.
Since the union of the $B_i$'s are dense in $B_{\infty}$,
$\tilde{\Theta}(x)$ asymptotically commutes with every element
of $B_{\infty}$ and is contained in $(B_{\infty})_{\omega}$.

$\tilde{\Theta}$ is a homomorphism,
since each $\Theta_{ij}^n$ is.
From lemma 2.2,
the iterated shift is bounded away from
zero in 2-norm.  So if $x \neq 0$, the sequence $\tilde{\Theta}(x)$
does not approach zero in 2-norm, and gives a nonzero element of
the central sequence algebra.  In other words, $\tilde{\Theta}$ is
injective.
\end{proof}

Since $x$ has finite spectrum, this implies that
$\tilde{\Theta}$ preserves the spectrum of $x$.
In particular $||x||_{\infty} = \tilde{\Theta}(||x||)_{\infty}$.

\subsection{Outerness of Compatible Actions}

Let $\alpha$ be a compatible automorphism.
Take $0<i<j$, $x \in B_{i+2}' \cap B_{j+2}$.
Then since $\alpha$ fixes the Jones projections,
we have
$$e_{j+2}\alpha(\rho_{ij}(x)) = \alpha(e_{j+2} \rho_{ij}(x))
= \alpha(T_{ij} x T_{ij}^*) = \alpha(T_{ij}) \alpha(x) \alpha (T_{ij}^*) =
T_{ij} \alpha(x) T_{ij}^*$$
This is the same as $e_{j+2} \rho_{ij}(\alpha(x))$.
So $\alpha$ commutes with $\rho_{ij}$ for all $i, j$.
It follows that $\alpha$ commutes with each $\Theta_{ij}$ as well.
Since inner automorphisms act trivially on central sequences,
this gives us a test for outerness of compatible automorphisms.

\begin{lem}
Let the tower of $B_i$'s be as above.
If $\alpha$ is a compatible automorphism of $B_{\infty}$,
and $\alpha$ does not act trivially on $B_0' \cap B_i$ for all $i$,
then $\alpha$ is outer.
\end{lem}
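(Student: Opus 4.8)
The plan is to argue by contradiction using the map $\tilde{\Theta}$ into the central sequence algebra constructed in Lemma 2.3. Suppose $\alpha$ is inner, say $\alpha = \mathrm{Ad}\, u$ for a unitary $u \in B_{\infty}$. The key fact, established in the discussion preceding the statement, is that a compatible automorphism $\alpha$ commutes with every canonical shift $\Theta_{ij}$, hence with the iterated shift $\Theta_{ij}^n$, and therefore with the embedding $\tilde{\Theta}\colon B_0' \cap B_j \to (B_{\infty})_{\omega}$. Since inner automorphisms of $B_{\infty}$ act trivially on the central sequence algebra $(B_{\infty})_{\omega}$ (an inner automorphism $\mathrm{Ad}\, u$ fixes every central sequence because $u$ commutes asymptotically with any central sequence, up to $2$-norm), for $x \in B_0' \cap B_j$ we would get
$$\tilde{\Theta}(\alpha(x)) = \alpha\bigl(\tilde{\Theta}(x)\bigr) = \tilde{\Theta}(x),$$
where the first equality uses that $\alpha$ intertwines $\tilde{\Theta}$ and the second uses triviality of $\mathrm{Ad}\,u$ on $(B_{\infty})_{\omega}$. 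Since $\tilde{\Theta}$ is injective by Lemma 2.3, this forces $\alpha(x) = x$ for all $x \in B_0' \cap B_j$.

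Running this for every $j$, we conclude that an inner compatible automorphism acts trivially on $B_0' \cap B_i$ for all $i$. This is the contrapositive of the statement: if $\alpha$ fails to act trivially on some $B_0' \cap B_i$, then $\alpha$ cannot be inner, i.e.\ $\alpha$ is outer. One small point to verify carefully is that the relevant indices line up — the intertwining relation $\alpha \Theta_{ij} = \Theta_{ij} \alpha$ was derived for $0 < i < j$, and Lemma 2.3 uses $0 \le i \le j$; taking $i = 0$ is exactly the case needed, so I would double-check that the argument before the statement (which explicitly fixes $e_{j+2}$ and manipulates $T_{ij}$) goes through with $i = 0$, which it does since nothing there required $i$ strictly positive.

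The main obstacle — really the only nontrivial input — is the claim that inner automorphisms act trivially on the central sequence algebra, together with the compatibility of $\alpha$ with $\tilde\Theta$. The latter is already done in the paragraph preceding the lemma. For the former: if $z = (z_n) \in (B_{\infty})_{\omega}$ and $u \in B_{\infty}$, then $\|u z_n u^* - z_n\|_2 = \|[u, z_n]\|_2 \to 0$ along $\omega$ precisely because $(z_n)$ is a central sequence and $u \in B_{\infty}$; hence $\mathrm{Ad}\, u$ is the identity on $(B_{\infty})_{\omega}$. This is standard, so the proof is quite short: assemble the intertwining property, the triviality of inner automorphisms on $(B_{\infty})_\omega$, and the injectivity of $\tilde\Theta$, then take the contrapositive.
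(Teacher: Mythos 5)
Your proof is correct and follows essentially the same route as the paper's: both use the intertwining $\alpha\circ\tilde\Theta = \tilde\Theta\circ\alpha$, the injectivity of $\tilde\Theta$ from Lemma 2.3, and the fact that inner automorphisms act trivially on $(B_\infty)_\omega$. The only cosmetic difference is that you argue via the contrapositive while the paper argues directly from a witness $x$ with $\alpha(x)\neq x$; your observation about the index $i=0$ and your spelled-out justification that $\mathrm{Ad}\,u$ is trivial on central sequences are both correct and harmless additions.
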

\begin{proof}
Let $x$ be an element of $B_0' \cap B_i$, for some $i \ge 0$.
Suppose
that $\alpha(x) \neq x$.  Then
$\alpha(x)-x$ is a nonzero element of $B_0' \cap B_i$,
and so by lemma 2.3 $\tilde{\Theta}(\alpha(x)-x)$
is a nonzero element
of the central sequence algebra $(B_{\infty})_{\omega}$.

$\alpha$ has a pointwise action on $(B_{\infty})^{\omega}$ which
restricts to $(B_{\infty})_{\omega}$.
Since $\alpha$ commutes with $\Theta$ and $\Theta^n$,
we have
$$\alpha(\tilde{\Theta}(x)) = (\alpha(\Theta_{0i}^n(x))) =
(\Theta_{0i}^n(\alpha(x))) = \tilde{\Theta}(\alpha(x))$$
$\tilde{\Theta}$ is injective from the previous section, so
we have $\alpha(\tilde{\Theta}(x))-\tilde{\Theta}(x)) \neq 0$
as well.  This means that the induced
action of $\alpha$ on central sequences is nontrivial.  Inner automorphisms
act trivially on central sequences, so with the above assumption
$\alpha$ is outer.
\end{proof}

We may conclude that if $\alpha$ is not outer, i.e., $\alpha = Ad u$
for some unitary $u \in B_{\infty})$,
it must fix $B_0' \cap B_i$ for all $i$.  This means
that $u$ commutes with $B_0' \cap B_i$ for all $i$,
and hence with the strong closure
$\overline{\cup_i^{\infty} B_0' \cap B_i}^{st}$.

\begin{lem}
Let the tower of $B_i$'s be as above.  Then
$\overline{\cup_i^{\infty} B_0' \cap B_i}^{st}=
B_0' \cap B_{\infty} $.
\end{lem}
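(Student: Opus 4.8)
The plan is to prove the two inclusions separately. The inclusion $\overline{\cup_i^{\infty} B_0' \cap B_i}^{st} \subseteq B_0' \cap B_{\infty}$ is immediate: each $B_0' \cap B_i$ is contained in $B_0' \cap B_{\infty}$, and $B_0' \cap B_{\infty}$ is a von Neumann algebra (the intersection of the commutant of $B_0$ in $\B(L^2(B_{\infty}))$ with $B_{\infty}$), hence strongly closed. The substance is the reverse inclusion, which I would obtain by a standard approximation-and-conditional-expectation argument.

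Fix $x \in B_0' \cap B_{\infty}$. Since $B_{\infty}$ is the strong closure of $\cup_i B_i$, this union is $||\cdot||_2$-dense in $B_{\infty}$, so choose $x_n \in B_{i_n}$ with $||x_n - x||_2 \to 0$. Let $E$ be the trace-preserving conditional expectation of $B_{\infty}$ onto $B_0' \cap B_{\infty}$. As $E$ contracts the $2$-norm and $E(x) = x$, we have $||E(x_n) - x||_2 = ||E(x_n - x)||_2 \le ||x_n - x||_2 \to 0$. The key claim is that $E(x_n) \in B_0' \cap B_{i_n}$; more precisely, that the restriction of $E$ to any $B_i$ is exactly the conditional expectation $E_{B_0' \cap B_i}$ of $B_i$ onto $B_0' \cap B_i$. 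To prove this I would realize $E$ as the average $E(y) = \int_{\mathcal{U}(B_0)} u y u^* \, du$ over the compact unitary group of the finite-dimensional algebra $B_0$: this map is normal, trace-preserving, idempotent, and has range exactly $B_0' \cap B_{\infty}$, so by uniqueness of conditional expectations it equals $E$. Since $B_0 \subseteq B_i$, each $uyu^*$ with $y \in B_i$ again lies in $B_i$, so $E(B_i) \subseteq B_i \cap (B_0' \cap B_{\infty}) = B_0' \cap B_i$; being a trace-preserving conditional expectation onto $B_0' \cap B_i$, $E|_{B_i}$ must coincide with $E_{B_0' \cap B_i}$. Equivalently, this is the statement that the quadrilateral $$\commsq{Z(B_0)}{B_0}{B_0' \cap B_i}{B_i}$$ is a commuting square which, moreover, sits inside the corresponding square with $B_{\infty}$ in place of $B_i$ — and this holds because $E_{B_i}$ commutes with the $B_0$-bimodule structure, exactly the observation already used for the canonical shift above.

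Granting the claim, $E(x_n) \in B_0' \cap B_{i_n} \subseteq \cup_i^{\infty}(B_0' \cap B_i)$ and $||E(x_n) - x||_2 \to 0$, so $x$ lies in the $||\cdot||_2$-closure of $\cup_i^{\infty}(B_0' \cap B_i)$; since the $||\cdot||_2$-closure and the strong closure of a $*$-subalgebra of $B_{\infty}$ agree, $x \in \overline{\cup_i^{\infty} B_0' \cap B_i}^{st}$, completing the proof. The one step requiring genuine care is the identification $E|_{B_i} = E_{B_0' \cap B_i}$; the surrounding density and continuity arguments are routine.
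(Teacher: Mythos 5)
Your proof is correct and rests on the same idea as the paper's: the commuting square
$$\commsq{B_0' \cap B_i}{B_0' \cap B_{\infty}}{B_i}{B_{\infty}}$$
The only difference is orientation. The paper applies the \emph{horizontal} expectation $E_{B_i}$ directly to $x \in B_0' \cap B_\infty$, then uses the commuting square to identify $E_{B_i}(x) = E_{B_0'\cap B_i}(x) \in B_0' \cap B_i$, so no approximating sequence from $\cup_i B_i$ is needed. You instead approximate $x$ by $x_n \in B_{i_n}$ and apply the \emph{vertical} expectation $E_{B_0' \cap B_\infty}$ to $x_n$, using the commuting square to see $E(x_n) \in B_0' \cap B_{i_n}$. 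Both are valid; the paper's version saves one step by avoiding the auxiliary sequence. Your averaging realization of $E_{B_0'\cap B_\infty}$ over $\mathcal{U}(B_0)$ is a nice way to verify the commuting square, which the paper asserts without proof; the paper's implicit justification is that $E_{B_i}$ is a $B_0$-bimodule map (as you note as an alternative), and the two arguments are equivalent.
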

\begin{proof}
The following square commutes:
$$\commsq{B_0' \cap B_i}{B_0' \cap B_{\infty}}{B_i}{B_{\infty}}$$

The $B_i$'s are dense in $B_{\infty}$, so
$||x - E_{B_i}(x)||_2$ goes to zero as $i$ goes to infinity.
$E_{B_i}(x) = E_{B_0' \cap B_i}(x)$, so
$x$ is in the $2$-norm closure of $\cup_i B_0' \cap B_i$.
A sequence
of elements in $B_{\infty}$ converges strongly if it converges
in 2-norm,
implying that $x$
is in the strong closure of $\cup_i B_0' \cap B_i$.
\end{proof}

These results imply that if $Ad u$ is compatible
inner, then $u$ must commute
with $B_0' \cap B_{\infty}$.  Finite-dimensional algebras in a
$II_1$ factor have the bicommutant property, so $u$ must be in $B_0$
if $Ad u$ is compatible.
Since compatible automorphisms are determined by their restriction to $B_1$,
we may make a slightly stronger statement, as follows:

\begin{thm}
Let the tower of $B_i$'s be as above.
If $\alpha$ is a compatible automorphism of $B_{\infty}$, then $\alpha$
is inner if and only if $\alpha|_{B_1} = Ad u|_{B_1}$ for some unitary
$u \in B_0$.
\end{thm}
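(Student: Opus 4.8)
The plan is to deduce this from the results already assembled in this subsection, essentially by chasing the implications in both directions. The key fact established above is: if $\alpha = Ad\,u$ is a compatible inner automorphism, then $u$ commutes with $B_0' \cap B_\infty$, and since $B_0$ is a finite-dimensional (hence bicommutant-closed) subalgebra of the $II_1$ factor $B_\infty$ with $(B_0' \cap B_\infty)' \cap B_\infty = B_0$, we get $u \in B_0$. That handles the ``only if'' direction up to restricting to $B_1$: if $\alpha$ is compatible inner, then $\alpha = Ad\,u$ with $u \in B_0$, so in particular $\alpha|_{B_1} = Ad\,u|_{B_1}$ with $u \in B_0$.

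For the converse, suppose $\alpha$ is compatible and $\alpha|_{B_1} = Ad\,u|_{B_1}$ for some unitary $u \in B_0$. First I would note that $u \in B_0 \subset B_1$, so $Ad\,u$ is a well-defined inner automorphism of $B_\infty$, and it leaves $B_1$ invariant (indeed it leaves every $B_i$ invariant, since $u \in B_1 \subset B_i$). Next I would check that $Ad\,u$ is itself a compatible automorphism: it leaves each $B_i$ invariant, and it fixes each Jones projection $e_i$, because $e_i \in \{B_{i-1}, e_i\}'' = B_i$ and more to the point $u \in B_0$ commutes with $e_i$ for all $i \geq 2$ (the Jones projection $e_i$ commutes with $B_{i-2} \supseteq B_0$), so $Ad\,u(e_i) = e_i$. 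Thus $Ad\,u$ is compatible. Now both $\alpha$ and $Ad\,u$ are compatible automorphisms of $B_\infty$ whose restrictions to $B_1$ agree. By Theorem 2.1 (uniqueness of the compatible extension of a given automorphism of $B_1$ leaving $B_0$ invariant — here both restrict to $Ad\,u|_{B_1}$, which leaves $B_0$ invariant since $u \in B_0$), we conclude $\alpha = Ad\,u$ on all of $B_\infty$, so $\alpha$ is inner.

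I should be slightly careful about one point: Theorem 2.1 is stated as producing \emph{the} unique compatible extension of an automorphism $a$ of $B_1$ that leaves $B_0$ invariant. The remark immediately following the proof of Theorem 2.1 makes exactly the needed observation in reverse — any compatible automorphism equals the canonical extension of its own restriction to $B_1$. So both $\alpha$ and $Ad\,u$, being compatible, coincide with the canonical extension of $Ad\,u|_{B_1}$, hence with each other.

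The only genuine obstacle is verifying that $Ad\,u$ with $u\in B_0$ is compatible, i.e.\ that it fixes all the $e_i$; this rests on the standard fact that $e_i$ commutes with $B_{i-2}$ (equivalently $e_i x e_i = E_{B_{i-2}}(x)$ for $x\in B_{i-1}$ forces $[e_i, B_{i-2}]=0$), which is recalled in Section 2.2. Everything else is bookkeeping: assembling the ``only if'' direction from the bicommutant argument already given, and the ``if'' direction from the uniqueness clause of Theorem 2.1. I would present the proof in that order — first the short ``only if'' recap, then the ``if'' direction via compatibility of $Ad\,u$ and uniqueness.
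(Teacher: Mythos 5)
Your proof is correct and follows essentially the same route as the paper: the ``only if'' direction recycles the bicommutant argument developed just before the theorem to conclude $u \in B_0$, and the ``if'' direction uses that $B_1$ together with the Jones projections generates $B_\infty$ (packaged via the uniqueness remark after Theorem~2.1). The one place you go beyond the paper is in explicitly verifying that $Ad\,u$ with $u \in B_0$ fixes all $e_i$ (via $[e_i, B_{i-2}] = 0$); the paper asserts this silently, so your added check is a small, correct clarification rather than a different approach.
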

\begin{proof}
First suppose that $\alpha$ is compatible, and
$\alpha|_{B_1}=Ad u|_{B_1}$ for some unitary $u \in B_0$.  Then
$\alpha$ agrees with $Ad u$ on $B_1$, and both
automorphisms fix the $e_i$'s.  $B_1$ and the $e_i$'s generate
$B_{\infty}$, so in this case $\alpha = Ad u$ and is inner.

Alternatively, let $\alpha$ be inner and
compatible.  Then $\alpha = Ad u$ for some unitary $u \in B_0$,
and $\alpha|_{B_1}=Ad u|_{B_1}$.
\end{proof}

This theorem reduces determining outerness of a compatible
automorphism to a purely computational problem. 

\section{Commuting-square subfactors and group actions}

\subsection{Introduction}

In~\cite{BH}, the authors discuss 
group type subfactors of the form $M^H \subset M \rtimes K$, where
$H$ and $K$ are finite groups with outer actions on $M$.  The principal and dual principal
graphs of such subfactors may be computed by finding the quotient
$G = H \ast K / Int M$.  This requires being able to determine
whether a specified word $w \in H \ast K$ produces an outer automorphism.
In general this may be difficult, even if $M$ is hyperfinite.

We will apply this technique to the commuting-square subfactors mentioned
in the introduction.
We will give conditions for a commuting
square subfactor to be of fixed-point or crossed-product type,
and describe how to compose two such subfactors to obtain a Bisch-Haagerup
subfactor.  As we will see, in this case the action of $H$ and $K$
is compatible with the Jones tower of the intermediate
subfactor.  This will allow us to use the results of the previous
section to classify many previously intractable examples.

We briefly review the basic construction on commuting squares,
following~\cite{JS}.
A quadrilateral of Von Neumann algebras (with trace)
$$\commsq{A_0}{A_1}{B_0}{B_1}$$
is a commuting square if $E_{B_0}$ and $E_{A_1}$ commute as
operators on $L^2(B_1)$.  If we use the Markov
trace on $B_0 \subset B_1$, we may iterate
the basic construction on $B_0 \subset B_1$ as in the previous section
to obtain a hyperfinite $II_1$ factor.  We obtain a tower of $A_i$'s
as well, given by $A_i = \{A_{i-1}, e_i\}''$.

$e_i$ implements the conditional expectation from
$A_{i-1}$ to $A_{i-2}$. By~\cite{JS},
for the inclusion $A_i \subset A_{i+1} \subset A_{i+2}$ to be isomorphic
to the basic construction on $A_i \subset A_{i+1}$
it is then sufficient for the ideal $A_{i-1}e_iA_{i-1}$ to include the identity.
From the properties of the Jones projections, this is true if the
ideal $A_1 e_2 A_1$ includes the identity as operators on
$L^2(B_1)$.  This is the symmetry property of~\cite{JS}; the
authors give several descriptions of this property, which they
show are all equivalent to the following.

\begin{dfn}
A commuting square $$\commsq{A_0}{A_1}{B_0}{B_1}$$
is \begin{bf}symmetric\end{bf} if $1 \in A_1 E_{B_0} A_1$ as
operators on $L^2(B_1)$.
\end{dfn}

The Markov
trace on $B_0 \subset B_1$ extends to $B_{\infty}$, and
then restricts to $A_{\infty} \subset B_{\infty}$ with
no additional assumptions, producing a hyperfinite
$II_1$ subfactor.  The index $[B_{\infty}:A_{\infty}]$ of this
inclusion is the squared norm of the inclusion matrix for the
algebras
$A_0 \subset B_0$.  Every symmetric
connected commuting square admits a unique
Markov trace, so from now on we will assume that
this is the trace we use for any such commuting square.

In order for $B_{\infty}$ and $A_{\infty}$ to be factors,
the horizontal inclusions in the above square must
be connected.  However, we will not require the vertical
inclusions to be connected, since we are not concerned
here with the vertical basic construction.

\subsection{Groups of Outer Automorphisms For Composite Commuting Squares}

\begin{thm}
Consider the symmetric, horizontally connected commuting square
$$\commsq{A_{00}}{A_{10}}{A_{01}}{A_{11}}$$ generating a subfactor
$N \subset M$ via horizontal iteration of the basic construction, with
Jones projections $\{e_i\}$.
Suppose that there exist intermediate algebras $B_0$, $B_1$, as follows:

$$\begin{array}{ccc}
A_{01}&\subset&A_{11}\\
\cup& &\cup\\
B_0&\subset&B_1\\
\cup& &\cup\\
A_{00}&\subset&A_{10}\end{array}$$

Assume $B_0 \subset B_1$ is connected, and the quadrilateral
$$\commsq{B_0}{B_1}{A_{01}}{A_{11}}$$ commutes.
Then there is an intermediate subfactor $P$ obtained by
iterating the basic construction on the $B_i$'s, and both
$N \subset P$ and $P \subset M$ arise from sub-commuting-squares
of the original diagram.
\end{thm}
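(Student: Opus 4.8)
The plan is to split the given three--row diagram along the middle row into the two sub--quadrilaterals
$$U=\commsq{B_0}{B_1}{A_{01}}{A_{11}}\qquad\text{and}\qquad L=\commsq{A_{00}}{A_{10}}{B_0}{B_1},$$
show that each is a symmetric, horizontally connected commuting square, and identify their commuting--square subfactors: $P$ will be the hyperfinite $II_1$ factor obtained by iterating the basic construction on $B_0\subset B_1$, the subfactor of $U$ will be $P\subset M$, and the subfactor of $L$ will be $N\subset P$. For each of $U$, $L$ there are three things to check — that it commutes, that its two horizontal inclusions are connected, and that it is symmetric — after which one matches the towers it produces against the towers already generating $N$, $P$, $M$.

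Recall that $M=\overline{\bigcup_i A_{i1}}$ arises by iterating the basic construction on $A_{01}\subset A_{11}$ with Jones projections $\{e_i\}$, while $N=\overline{\bigcup_i A_{i0}}$ with $A_{i0}=\{A_{i-1,0},e_i\}''$; both towers use the same $e_i$. I would begin with $U$. Its horizontal inclusions $B_0\subset B_1$ and $A_{01}\subset A_{11}$ are connected (the first by hypothesis, the second as a horizontal inclusion of the original square), and $U$ commutes by hypothesis. For symmetry, the right--hand column of the diagram gives $A_{10}\subseteq B_1$, so the span $B_1\,E_{A_{01}}\,B_1$ of operators on $L^2(A_{11})$ contains $A_{10}\,E_{A_{01}}\,A_{10}$, which contains $1$ because the original square is symmetric; hence $U$ is symmetric. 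Iterating the basic construction on the top row $A_{01}\subset A_{11}$ of $U$ is the very same iteration, with the same $e_i$ and the same Markov trace, that produces $M$; the second tower of $U$ is $B_i:=\{B_{i-1},e_i\}''$, and by the symmetry property recalled in Section~3.1 this is the iterated basic construction on $B_0\subset B_1$. Put $P:=\overline{\bigcup_i B_i}$, a hyperfinite $II_1$ factor; then $P\subset M$ is the commuting--square subfactor of $U$ (and the restriction to $B_1$ of the Markov trace of $M$ is the Markov trace of $B_0\subset B_1$, since it extends through this tower). Finally, an induction using the common $e_i$ and the inclusions $A_{00}\subseteq B_0\subseteq A_{01}$, $A_{10}\subseteq B_1\subseteq A_{11}$ gives
$$A_{i0}=\{A_{i-1,0},e_i\}''\;\subseteq\;\{B_{i-1},e_i\}''=B_i\;\subseteq\;\{A_{i-1,1},e_i\}''=A_{i1};$$
passing to strong closures yields $N\subseteq P\subseteq M$, so $P$ is an intermediate subfactor, obtained by iterating the basic construction on the $B_i$'s.

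Now for $L$. Its horizontal inclusions $A_{00}\subset A_{10}$ and $B_0\subset B_1$ are connected, the first as a horizontal inclusion of the original square and the second by hypothesis. For commutativity, take $x\in A_{10}$; since $B_0\subseteq A_{01}\subseteq A_{11}$, the tower of conditional expectations gives $E_{B_0}(x)=E_{B_0}\big(E_{A_{01}}(x)\big)$, and $E_{A_{01}}(x)\in A_{00}\subseteq B_0$ because the original square commutes, so $E_{B_0}(x)=E_{A_{01}}(x)\in A_{00}$; thus $E_{B_0}(A_{10})\subseteq A_{00}$ and $L$ commutes. For the symmetry of $L$ and the identification of its subfactor: iterating the basic construction on the top row $B_0\subset B_1$ of $L$ has Jones projections precisely the $e_i$, by the previous paragraph, so the first tower of $L$ is the $B_i$'s and closes up to $P$, while the second tower of $L$ is $\{A_{i-1,0},e_i\}''=A_{i0}$ — exactly the second tower of the original square. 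Since the original square is symmetric, $A_{i+1,0}=\langle A_{i0},e_{i+1}\rangle$ is the basic construction on $A_{i-1,0}\subset A_{i0}$ for every $i$; in particular $1\in A_{10}\,e_2\,A_{10}$, so $L$ is symmetric, and its second tower closes up to $N$. Hence $N\subset P$ is the commuting--square subfactor of $L$, which completes the proof.

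I expect the main obstacle to be the symmetry bookkeeping: one must track the fact that the single family $\{e_i\}$ produced by the iteration on $A_{01}\subset A_{11}$ simultaneously serves as the Jones projections for the intermediate tower $\{B_i\}$ — this is what makes the two sub--squares fit together and lets symmetry of the original square descend to $L$ — and that at every stage the trace in play is the correct Markov trace. By comparison, verifying that $U$ and $L$ are commuting squares is a routine diagram chase with towers of conditional expectations.
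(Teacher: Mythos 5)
Your proposal is correct and follows the same overall plan as the paper: split into the upper square $U$ and lower square $L$, verify that each is a connected, commuting, symmetric Markov square, and match the resulting towers to $N\subset P$ and $P\subset M$. Your verification that $U$ commutes and is symmetric, and that $L$ commutes, is the same as the paper's (you use $E_{B_0}(A_{10})\subseteq A_{00}$ where the paper uses the equivalent $E_{A_{10}}(B_0)\subseteq A_{00}$, but these are two sides of the same coin).

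The one place where you genuinely diverge from the paper is the symmetry of $L$. The paper does a direct operator computation on $L^2(A_{11})$: starting from $1\in A_{10}E_{A_{01}}A_{10}$, it multiplies by $E_{B_1}$ (which commutes with $A_{10}$ since $A_{10}\subset B_1$), uses commutativity of $U$ to rewrite $E_{B_1}E_{A_{01}}=E_{B_0}$, and then restricts to $L^2(B_1)$ where $E_{B_1}$ becomes the identity. You instead argue structurally: symmetry of $U$ (already established) makes $B_0\subset B_1\subset B_2=\{B_1,e_2\}''$ a standard basic construction, so the abstract $e_2$ can be identified with $E_{B_0}$ acting on $L^2(B_1)$; meanwhile symmetry of the original square gives $1\in A_{10}e_2A_{10}$; combining the two yields $1\in A_{10}E_{B_0}A_{10}$ on $L^2(B_1)$. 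Both arguments are correct. The paper's is more self-contained and explicitly tracks the Hilbert spaces, while yours is shorter but leans on the "standard inclusion" machinery from Jones--Sunder to transport the relation from $L^2(A_{11})$ to $L^2(B_1)$ — a transfer you gesture at ("the Jones projections are precisely the $e_i$") but do not fully spell out. It would be worth stating explicitly that this identification of $e_2$ with $E_{B_0}$ is exactly what symmetry of $U$ buys you, since the statement $1\in A_{10}e_2A_{10}$ lives a priori inside $A_{21}\subset\mathcal{B}(L^2(A_{11}))$ while the symmetry of $L$ is a statement about operators on $L^2(B_1)$.
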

\begin{proof}
The ideal $B_1 E_{A_{10}} B_1$ necessarily contains the identity, since
the original commuting square is symmetric and $A_{10} \subset B_1$.
Therefore the upper commuting square $$\commsq{B_0}{B_1}{A_{01}}{A_{11}}$$ is
symmetric, and is Markov by hypothesis.
Let $B_{i+1}=\{B_i, e_{i+1}\}''$; then by the symmetric
property all inclusions
$B_i \subset B_{i+1} \subset B_{i+2}$ are standard.  The Markov
trace on the $A_{i1}$'s restricts to one on the $B_i$'s~\cite{JS}, and we obtain
an intermediate subfactor $N \subset P = \overline{\cup_i B_i}^{st} \subset M$.

The lower quadrilateral $$\commsq{A_{00}}{A_{10}}{B_0}{B_1}$$ automatically
commutes, since $E_{A_{10}}(B_0) \subset E_{A_{10}}(A_{01})=A_{00}$.
To compute $A_{10}E_{B_0}A_{10}$ as operators on $L^2(B_1)$, we note
that $1 \in A_{10}E_{A_{01}}A_{10}$ as operators on $L^2(A_{11})$.
Multiplying both sides by $E_{B_1}$, we find that
$E_{B_1} \in A_{10}E_{B_1}E_{A_{01}}A_{10}$, since
$E_{B_1}$ commutes with $A_{10}$.   Since
the upper quadrilateral commutes by assumption,
we have $E_{B_1}E_{A_{01}}=E_{B_0}$.  Therefore
$E_{B_1} \in A_{10}E_{B_0}A_{10}$, as operators on $L^2(A_{11})$.
If we restrict to $L^2(B_1)$, then $E_{B_1}$ is the identity,
showing that $1 \in A_{10}E_{B_0}A_{10}$ on $L^2(B_1)$.  Also
we have already shown that the trace on $B_1$ is the Markov
trace for the inclusion $B_0 \subset B_1$.
So the lower quadrilateral is symmetric Markov, and we can obtain
the subfactor $N \subset P$ by iterating the basic construction
on it.

We conclude that $N \subset P$ and $P \subset M$ are both commuting-square
subfactors, generated by 
$$\commsq{A_{00}}{A_{10}}{B_0}{B_1}$$ and
$$\commsq{B_0}{B_1}{A_{01}}{A_{11}}$$
respectively.
\end{proof}

In~\cite{ZL} the author showed that if a commuting-square subfactor
$N \subset M$ has an intermediate subfactor $P$, then intermediate
algebras
$B_0 \subset B_1$ exist, with upper and lower symmetric commuting squares
as above.  The above theorem may be thought of as
the converse of this result.

Now suppose that
$A_{01}=B_1^H$ and $A_{10}$ is isomorphic to $B_0 \rtimes K$.
Then our subfactor is of the
type described in~\cite{BH}.  In order to find the principal and dual principal graphs,
it is therefore sufficient to find the group generated by $H$ and $K$ in $Out P$.

\begin{thm}
Let $P$ be the $II_1$ factor obtained by iterating the basic construction
on an inclusion of finite-dimensional $C^*$-algebras $B_0 \subset B_1$.
Let $H$ and $K$ be finite groups with outer actions
on $P$, with both actions compatible with the tower of the $B_i$'s.
Let $\rho$ be the representation of $H \ast K$ obtained
by combining these actions.
Then $G = H \ast K / Int P$ may be computed
by considering only $\rho|_{B_1}$.
\end{thm}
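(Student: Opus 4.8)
\emph{Proof proposal.} The plan is to transport the innerness criterion of Theorem~2.3, which is stated for a single compatible automorphism, to every word of $H \ast K$ at once. First I would check that the compatible automorphisms of $P$ form a subgroup of $Aut(P)$: if $\alpha$ and $\beta$ each fix every $e_i$ and leave every $B_i$ invariant, then $(\alpha\beta)(e_i) = \alpha(\beta(e_i)) = \alpha(e_i) = e_i$ and $(\alpha\beta)(B_i) = \alpha(B_i) = B_i$, and applying $\alpha^{-1}$ to $\alpha(e_i)=e_i$, $\alpha(B_i)=B_i$ shows $\alpha^{-1}$ is compatible as well. By hypothesis each $\rho(h)$ for $h \in H$ and each $\rho(k)$ for $k \in K$ is compatible, and since these generate $\rho(H \ast K)$ it follows that $\rho(w)$ is compatible for \emph{every} $w \in H \ast K$.

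Next I would record that restriction to $B_1$ is a homomorphism $\bar\rho : H \ast K \to Aut(B_1)$, $\bar\rho(w) = \rho(w)|_{B_1}$: this uses that each $\rho(w)$ leaves $B_1$ (and, by compatibility, $B_0$) invariant, so $\rho(w)|_{B_1}$ is a $^*$-automorphism of $B_1$ preserving $B_0$, and $(\rho(w_1)\rho(w_2))|_{B_1} = \rho(w_1)|_{B_1} \circ \rho(w_2)|_{B_1}$ because $\rho(w_2)$ maps $B_1$ into $B_1$. By the remark following Theorem~2.1, a compatible automorphism is determined by its restriction to $B_1$, so $\rho(w)$ is recovered from $\bar\rho(w)$; in particular $\bar\rho$ carries all the information of $\rho$.

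Now comes the key step: apply Theorem~2.3 to $\alpha = \rho(w)$, which is legitimate since $\rho(w)$ is compatible. It gives that $\rho(w)$ is inner precisely when $\rho(w)|_{B_1} = Ad\,u|_{B_1}$ for some unitary $u \in B_0$. Set $L = \{\,Ad\,u|_{B_1} : u \in U(B_0)\,\}$, a subgroup of $Aut(B_1)$ depending only on the finite-dimensional inclusion $B_0 \subset B_1$ (it is the image of the compact group $U(B_0)$ under the continuous homomorphism $u \mapsto Ad\,u|_{B_1}$). The criterion reads $\rho^{-1}(Int\,P) = \bar\rho^{-1}(L)$, and therefore
$$G = (H \ast K) / \rho^{-1}(Int\,P) = (H \ast K) / \bar\rho^{-1}(L) \cong \bar\rho(H \ast K) / (\bar\rho(H \ast K) \cap L),$$
where the right-hand side is built entirely from $\rho|_{B_1}$ together with the data $B_0 \subset B_1$. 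Feeding $G$ into the method of~\cite{BH} then yields the principal and dual principal graphs, so the whole computation rests only on $\rho|_{B_1}$.

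I do not expect a genuine obstacle here: the hard analytic content — reducing innerness of a compatible automorphism to a check on the finite-dimensional algebra $B_1$ — is already Theorem~2.3, and the present statement is the formal consequence of combining that with the fact that compatibility is closed under composition. The only points requiring care are bookkeeping ones: reading ``$H \ast K / Int\,P$'' correctly as the image of $H \ast K$ in $Out(P)$, i.e. the quotient by $\rho^{-1}(Int\,P)$ rather than a literal quotient; and noting that although $B_0$ is merely assumed finite-dimensional, the subgroup $L \subseteq Aut(B_1)$ and the test ``$\bar\rho(w) \in L$'' are honestly accessible, so that determining $G$ really is the finite-dimensional linear-algebra problem advertised in the introduction.
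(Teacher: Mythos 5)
Your proposal is correct and follows essentially the same route as the paper: observe that compatible automorphisms are closed under composition and inverses so that every $\rho_w$ is compatible, then invoke Theorem~2.3 to reduce the outerness test for $\rho_w$ to the finite-dimensional criterion $\rho_w|_{B_1} = Ad\,u|_{B_1}$ for some unitary $u \in B_0$. Your write-up simply makes explicit a few steps (the subgroup property, the reformulation as a quotient by $\bar\rho^{-1}(L)$) that the paper's proof states in compressed form.
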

\begin{proof}
To find $G=H \ast K/Int P$, it is sufficient to be able to
determine whether $\rho_w$ is outer for an arbitary word $w \in H \ast K$.
But since $H$ and $K$ map into the group of compatible automorphisms,
$\rho_w$ is compatible as well.  It follows that
$\rho_w$ is inner if and only if
$\rho_w|_{B_1} = Ad u|_{B_1}$ for
some unitary $u \in B_0$.
\end{proof}

This can be computed rapidly for any
particular $w$, assuming that $B_1$ is a reasonable size.  If we have a bit
more information about the structure of $G$ it may only be necessary to evaluate
a few thousand words, or even fewer.  In some cases further
simplifications occur, and this computation can be done by hand.

\subsection{Hadamard Subfactors of Depth 2}

A Hadamard matrix is a matrix with orthogonal columns whose entries are all
$\pm 1$~\cite{Ho}.
For our purposes, such matrices are incorrectly scaled:
we will define a complex Hadamard matrix to be an $n \times n$ unitary
matrix whose entries all have the same complex modulus, namely
$n^{-1/2}$.  If $H$
is an $n$ by $n$ complex Hadamard matrix, then from~\cite{JoP}
it is the biunitary
connection for a commuting square of the form
$$\commsq{\mathbb{C}}{H\mathbb{C}^nH^*}{\mathbb{C}^n}{M_n(\mathbb{C})}$$
Likewise if such a quadrilateral commutes, then $H$ must be a Hadamard matrix.

These are Hadamard commuting squares.  They are symmetric and connected,
and so with their Markov trace they give subfactors via
iteration of the basic construction~\cite{JS}.

\begin{dfn}
A \begin{bf}Hadamard subfactor\end{bf} is a subfactor obtained by
iterating the basic construction on the commuting square coming
from a complex Hadamard matrix.
\end{dfn}

Two Hadamard commuting squares are isomorphic if their matrices
are Hadamard equivalent, i.e., if
the matrices
can be obtained from each other by the operations of permuting rows
and columns, and multiplying rows and columns by scalars of modulus 1~\cite{JoP}.
In this case the corresponding Hadamard subfactors are the same.  The
index of a Hadamard subfactor is equal to the size of the matrix.

If $G$ is a finite abelian group, with
$|G|=n$, then its left regular representation
on $l^{\infty}(G)$ gives rise to the commuting square
$$\commsq{\mathbb{C}}{l^{\infty}(G)}{\mathbb{C}[G]}{M_n(\mathbb{C})}$$
by taking $M_n(\mathbb{C}) = l^{\infty}(G) \rtimes G$.

Any two maximal abelian subalgebras of $M_n(\mathbb{C})$ are unitarily
equivalent, so there exists $H_G \in M_n(\mathbb{C})$ with
$Ad H_G(\mathbb{C}[G])=l^{\infty}(G)$.  Since the above square commutes,
$H_G$ must be a complex Hadamard matrix.
We construct $H_G$ as follows.

For an abelian group $G$, $Hom(G,\mathbb{C})$ is isomorphic to $G$.
Specifically,
$G$ has $n$ 1-dimensional representations $\{\rho_g\}$, with
$\rho_g(x)\rho_h(x)=\rho_{gh}(x)$ for $g,h,x \in G$.
Indexing the rows and columns by elements of $G$, we then
let $(H_G)_{ij}=\rho_j(i)$.  This is the discrete Fourier transform
of the group $G$~\cite{Ho}, and is known as the Fourier
matrix when $G$ is cyclic.

\begin{thm}
The discrete Fourier transform $H_G$ of a finite abelian group $G$
gives the commuting square
$$\commsq{\mathbb{C}}{l^{\infty}(G)}{\mathbb{C}[G]}{M_n(\mathbb{C})}$$
and the corresponding Hadamard subfactor is $R \subset R \rtimes G$.
\end{thm}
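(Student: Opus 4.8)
The plan has two parts: first confirm that the explicit matrix $H_G$ is the biunitary connection of the commuting square in the statement, and then compute its subfactor by realizing it as a fixed-point algebra for an explicit outer action of $\hat G\cong G$, using the results of Section 2. For the first part, $n^{-1/2}H_G$ is unitary by the character orthogonality relation $\sum_j \rho_j(i)\overline{\rho_j(k)} = \sum_j\rho_j(ik^{-1}) = n\,\delta_{ik}$, and every entry of $H_G$ has modulus $n^{-1/2}$, so $H_G$ is a complex Hadamard matrix. The paragraph preceding the theorem already records that the displayed commuting square comes from the left regular representation, with $M_n(\mathbb{C}) = l^{\infty}(G)\rtimes_\lambda G$, $l^{\infty}(G)$ the algebra of diagonal matrices, and $\mathbb{C}[G] = \{\lambda(g)\}''$; it remains only to identify $H_G$ as its connection, and this holds because the $j$th column of $H_G$ is the vector of values of the character $\rho_j$, which $\lambda(g)$ scales by $\rho_j(g)^{-1}$ --- so $H_G$ diagonalizes the regular representation and $Ad\,H_G$ carries $\mathbb{C}[G]$ onto the diagonal $l^{\infty}(G)$, which by~\cite{JoP} is exactly what it means for $H_G$ to be the connection. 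Thus it suffices to compute the subfactor $N = A_\infty\subset B_\infty = M$ obtained by iterating the basic construction on $B_0 = \mathbb{C}[G]\subset B_1 = M_n(\mathbb{C})$ with $A_1 = l^{\infty}(G)$ and $A_i = \{A_{i-1},e_i\}''$; this has index $n$.

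Let $\{u_g\}_{g\in G}\subset M_n(\mathbb{C})$ be the canonical unitaries of $l^{\infty}(G)\rtimes_\lambda G$, so $u_gu_h = u_{gh}$ and $\mathbb{C}[G] = \{u_g\}''$, and for $\chi\in\hat G$ let $w_\chi\in l^{\infty}(G)$ be the diagonal unitary with entries $(\chi(h))_{h\in G}$. A short computation gives $w_\chi u_g w_\chi^* = \chi(g)u_g$, while $Ad\,w_\chi$ is trivial on $l^{\infty}(G)$; in particular $Ad\,w_\chi$ leaves $B_0 = \mathbb{C}[G]$ invariant, so by Theorem 2.1 it extends uniquely to a compatible automorphism $\alpha_\chi$ of $B_\infty$, and by uniqueness, together with $w_\chi w_{\chi'} = w_{\chi\chi'}$, the map $\chi\mapsto\alpha_\chi$ is an action of $\hat G$. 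Since $\alpha_\chi$ is trivial on $A_1 = l^{\infty}(G)$ and fixes every $e_i$, it fixes $A_\infty = \{l^{\infty}(G),e_2,e_3,\dots\}''$ pointwise, so $N\subseteq B_\infty^{\hat G}$. For $\chi\neq 1$, $\alpha_\chi$ is outer: by Theorem 2.3 a compatible automorphism is inner iff its restriction to $B_1$ is $Ad\,u|_{B_1}$ for a unitary $u\in B_0 = \mathbb{C}[G]$, whereas $\alpha_\chi|_{B_1} = Ad\,w_\chi|_{B_1}$ with $w_\chi\notin\mathbb{C}\cdot\mathbb{C}[G]$, because $l^{\infty}(G)\cap\mathbb{C}[G] = \mathbb{C}$ (these MASAs are in general position, which is exactly the Hadamard condition). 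Hence $\{\alpha_\chi\}$ is an outer action of $\hat G$ on $B_\infty$, so $[B_\infty:B_\infty^{\hat G}] = |\hat G| = n = [M:N]$; together with $N\subseteq B_\infty^{\hat G}\subseteq B_\infty$ this forces $N = B_\infty^{\hat G}$, which is then irreducible in $M$.

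It remains to exhibit the crossed-product decomposition. The $u_g$ lie in $B_1\subset M$, satisfy $u_gu_h = u_{gh}$, and obey $\alpha_\chi(u_g) = \chi(g)u_g$, so $u_g$ spans the $\hat G$-isotypic component $M^{(g)}$ of $M$ indexed by $g\in\widehat{\hat G}\cong G$; moreover $Ad\,u_g$ commutes with every $\alpha_\chi$ (since $Ad\,(\chi(g)u_g) = Ad\,u_g$) and hence preserves $N = B_\infty^{\hat G}$. The isotypic decomposition $M = \bigoplus_{g\in G}M^{(g)}$ with $M^{(e)} = N$ then yields $M^{(g)} = Nu_g$, so $M = \bigoplus_g Nu_g$ with $u_gu_h = u_{gh}$ and $u_gNu_g^* = N$ --- that is, $M = N\rtimes_\beta G$, $\beta_g = Ad\,u_g|_N$. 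The action $\beta$ is outer, for if $\beta_g = Ad\,a|_N$ with $a\in N$ unitary then $u_g^*a\in N'\cap M = \mathbb{C}$, forcing $u_g\in\mathbb{C}N = N$, impossible since $M^{(g)}\perp M^{(e)}$ for $g\neq e$. As $N$ is a hyperfinite $II_1$ factor and $\hat G\cong G$, the subfactor $N\subset M$ is of the form $R\subset R\rtimes G$. The genuine obstacle is structural, not computational: the $\hat G$-action is \emph{inner} already on $M_n(\mathbb{C})$, so the whole argument hinges on the Section 2 results guaranteeing that it is outer on $B_\infty$ with $A_\infty$ as its exact fixed-point algebra; granted that, identifying $M$ with the crossed product is bookkeeping with the isotypic components.
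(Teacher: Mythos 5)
Your proof is correct, but takes a Pontryagin-dual route from the paper's. The paper works directly with an outer action of $G$ on $M_0 = A_\infty$: each $u_g \in \mathbb{C}[G]$ normalizes $l^\infty(G)$ and fixes the Jones projections, so $Ad\,u_g|_{M_0}$ is a compatible automorphism of the tower $A_0 = \mathbb{C} \subset A_1 = l^\infty(G) \subset \cdots$, and since $A_0 = \mathbb{C}$ here, Theorem 2.3 makes any nontrivial compatible automorphism outer; because $M_n(\mathbb{C})$ is generated by the two MASAs, $M_1 = \{\mathbb{C}[G], M_0\}''$, and the crossed-product identity $M_1 = M_0 \rtimes G$ follows in one step. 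You instead produce a compatible outer action of $\hat G$ on the \emph{large} factor $M_1 = B_\infty$, extending $Ad\,w_\chi|_{B_1}$ via Theorem 2.1 along the $B$-tower $B_0 = \mathbb{C}[G] \subset B_1 = M_n(\mathbb{C})$, check outerness from Theorem 2.3 using $l^\infty(G) \cap \mathbb{C}[G] = \mathbb{C}$, deduce $M_0 = M_1^{\hat G}$ by an index count, and only then recover $M_1 = M_0 \rtimes G$ through the isotypic decomposition. Both arguments are sound. The paper's is shorter and avoids the isotypic-decomposition bookkeeping; yours stays entirely within the $B$-tower that Section 2 was set up for, and it correctly isolates the structural subtlety at the heart of the construction --- that the $\hat G$-action, though implemented by unitaries in $M_n(\mathbb{C})$ and hence inner there, becomes outer on $B_\infty$ precisely because its compatible extension $\alpha_\chi$ is not $Ad\,w_\chi$ itself. (One cosmetic remark: you take $l^\infty(G)$ diagonal and $\mathbb{C}[G]$ generated by permutation matrices, whereas the paper's proof takes $\mathbb{C}[G]$ diagonal; the two pictures differ by $Ad\,H_G$ and yield the same commuting square and subfactor, so this does not affect correctness.)
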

\begin{proof}
Let $\mathbb{C}[G]$ to be the diagonal subalgebra
of $M_n{\mathbb{C}}$, spanned by minimal projections
$\{e_x|x \in G\}$.
In this case we may take $u_g \in \mathbb{C}[G]$ to be
$\sum_{x \in G} \rho_g(x) e_x$,
where $\rho$ is as above.
The minimal projections
$\{f_x|x \in G\}$ of $l^{\infty}(G)$ are then given as
$f_x = H e_x H^*$, $H=H_G$.  To show that we have found the correct connection,
we should have the $u_g$'s acting on the $f_x$'s via permutation.

We compute the entries of $f_x$ as
$(f_x)_{ij} = H_{ix} (H^*)_{xj} = H_{ix}\overline{H_{jx}}$.  From
the definition of $H_G$, this is $\rho_x(i)\overline{\rho_x(j)}$.
Conjugating by the diagonal unitary
$u_g$ gives $(u_g f_x u_g^*)_{ij}=
\rho_g(i)\rho_x(i)\overline{\rho_x(j) \rho_g(j)}$.  The group
is abelian, so by the properties of $\rho$ this is
$\rho_{gx}(i)\overline{\rho_{gx}(j)}=(f_{gx})_{ij}$.  So the
$u_g$'s act by the regular representation on $H\mathbb{C}[G]H^*$,
and we have indeeed found the connection for the group type
Hadamard commuting square.           

Let $M_0 \subset M_1$ be the Hadamard subfactor induced by this
commuting square.  Then each $u_g \subset \mathbb{C}[G]$ normalizes
$l^{\infty}(G)$ and commutes with the horizontal Jones projections,
hence normalizes $M_0$. $Ad u_g$ is thus a compatible automorphism
of $M_0$, and from theorem 2.3 is outer for
any $g \neq 1$ (in this case we have $B_0 = \mathbb{C}$, so
any nontrivial compatible automorphism is outer).

This means that $\{ \{u_g\}, M_0\}''$
is isomorphic to $M_0 \rtimes G$.  The $u_g$'s span $\mathbb{C}[G]$,
and $M_1$ is generated by $\mathbb{C}[G]$ and $M_0$, so in fact
$M_1 = M_0 \rtimes G$.
\end{proof}

\subsection{Tensor Product of Hadamard Matrices}

Let $H_1$ and $H_2$ be complex
Hadamard matrices, of respective
sizes $m$ and $n$.  Then their tensor product $H=H_1 \otimes H_2$
is unitary.  If we take $i, k \in \{1...m\}$, $j,l \in \{1...n\}$,
then the matrix entry $H_{ij,kl}$ is equal to
$(H_1)_{ik}(H_2)_{jl}$.  Since $H_1$ and $H_2$ are Hadamard, the
complex modulus of this entry does not depend on $i$, $j$, $k$, $l$,
and $H$ is Hadamard as well.
We will use the structure of this
tensor product to construct an intermediate subfactor
of the Hadamard subfactor generated by $H$.

Let $\Delta_m=\mathbb{C}^m$, $\Delta_n=\mathbb{C}^n$
be the diagonal subalgebras of the appropriate
matrix algebras.
We may naturally apply the operator
$Ad H$ to $\Delta_m \otimes \Delta_n$,
and in fact to the entire tower of algebras
$\mathbb{C} \subset \Delta_m \otimes 1 \subset \Delta_m \otimes \Delta_n
\subset  M_m(\mathbb{C}) \otimes \Delta_n \subset M_m(\mathbb{C})
\otimes M_n(\mathbb{C})$
,
giving the following diagram:

$$\begin{array}{ccccc}
\Delta_m \otimes \Delta_n & \subset & M_m(\mathbb{C}) \otimes \Delta_n& \subset &
M_m(\mathbb{C})\otimes M_n (\mathbb{C}) \\
\cup& &\cup& &\cup\\
\Delta_m \otimes 1 & \subset &M_m(\mathbb{C}) \otimes 1& \subset &
H(M_m (\mathbb{C}) \otimes \Delta_n)H^* \\
\cup& &\cup& &\cup\\
\mathbb{C} & \subset & H(\Delta_m \otimes 1)H^* & \subset &
H(\Delta_m \otimes \Delta_n)H^*\\
\end{array}$$

Applying $Ad H$ to each appropriate algebra gives

$$\begin{array}{ccccc}
\Delta_m \otimes \Delta_n & \subset &M_m(\mathbb{C}) \Delta_n& \subset &
M_m(\mathbb{C})\otimes M_n (\mathbb{C}) \\
\cup& &\cup& &\cup\\
\Delta_m \otimes 1 & \subset &M_m(\mathbb{C}) \otimes 1& \subset &
M_m (\mathbb{C}) \otimes H_2(\Delta_n)H_2^* \\
\cup& &\cup& &\cup\\
\mathbb{C} & \subset & (H_1 \Delta_m H_1^*) \otimes 1 & \subset &
(H_1 \Delta_m H_1^*) \otimes (H_2 \Delta_n H_2^*)\\
\end{array}$$

 Next we consider
$$E_{M_m(\mathbb{C}) \otimes \Delta_n}(\Delta_m \otimes \Delta_n)=
\Delta_m \otimes E_{H_2 \Delta_n H_2^*}(\Delta_n)$$
Since $$\hadsq{H_2}{n}$$ commutes, the above conditional expectation
is $\Delta_m \otimes 1$, implying that
$$\commsq{\Delta_m \otimes 1}{H(M_m(\mathbb{C}) \otimes \Delta_n)H^*}
{\Delta_m \otimes \Delta_n}
{M_m(\mathbb{C}) \otimes M_n(\mathbb{C})}$$
commutes as well.  Furthermore, $\Delta_m \otimes 1 \subset
H(M_m(\mathbb{C}) \otimes \Delta_n)H^*$ is a connected inclusion,
since $(\Delta_m \otimes 1) \cap (M_m(\mathbb{C}) \otimes 1)' = \mathbb{C}$.

From theorem 3.1, we then
have an intermediate subfactor $P$, obtained by iterating
the basic construction on
$$\Delta_m \otimes 1 \subset H(M_m(\mathbb{C}) \otimes \Delta_n)H^*$$

We will refer frequently to these algebras later, so
let us define 
$B_0=\Delta_m \otimes 1$,
$B_1 =H(M_m(\mathbb{C})\otimes \Delta_n)H^* =
M_m(\mathbb{C}) \otimes (H_2 \Delta_n H_2^*) $

\begin{thm}
Let $H_1 = H_H^*$ and $H_2 = H_K$, for finite abelian
groups $H$, $K$.  Then the subfactor obtained from the Hadamard
matrix
$H_1 \otimes H_2$ is of Bisch-Haagerup type.
\end{thm}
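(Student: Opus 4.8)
The plan is to exhibit the subfactor generated by $H_1 \otimes H_2 = H_H^* \otimes H_K$ as an inclusion $P^H \subset P \rtimes K$ for compatible outer actions of $H$ and $K$ on the intermediate factor $P$, where $P$ is the factor obtained (as in the discussion preceding the statement) by iterating the basic construction on $B_0 = \Delta_m \otimes 1 \subset B_1 = M_m(\mathbb{C}) \otimes (H_2 \Delta_n H_2^*)$. First I would identify the two horizontal layers of the big $3 \times 3$ diagram. The bottom square is $\mathbb{C} \subset (H_1 \Delta_m H_1^*) \otimes (H_2 \Delta_n H_2^*)$ over $\mathbb{C} \subset (H_1 \Delta_m H_1^*) \otimes 1$, which — after conjugating everything by $H_1^* \otimes 1$, noting that $H_1^* = H_H$ is itself a Fourier matrix — is exactly the group-type Hadamard commuting square for $H$ tensored (on the right) with an algebra that, together with the symmetry/Markov properties already verified in Theorem 3.1 and its setup, identifies the lower subfactor $N \subset P$ with a fixed-point inclusion $P^H \subset P$. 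Dually, the top square $\Delta_m \otimes \Delta_n \subset M_m(\mathbb{C}) \otimes M_n(\mathbb{C})$ over $\Delta_m \otimes 1 \subset H(M_m(\mathbb{C}) \otimes \Delta_n)H^*$, conjugated appropriately and using $H_2 = H_K$, is the group-type square for $K$ tensored with a full matrix factor, so by Theorem 3.4 (the group-type computation) the upper subfactor $P \subset M$ is a crossed product $P \subset P \rtimes K$.

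Concretely, I would produce the unitaries implementing the two actions. For $K$: the elements $u_k = 1 \otimes \sum_{x \in K} \rho_k(x) f_x \in M_m(\mathbb{C}) \otimes \Delta_n$, where $\{f_x\}$ are the minimal projections of $\Delta_n$, lie in $M_m(\mathbb{C}) \otimes \Delta_n \subset M_m(\mathbb{C}) \otimes M_n(\mathbb{C}) = M$, normalize $B_1 = M_m(\mathbb{C}) \otimes (H_2 \Delta_n H_2^*)$ by the computation in Theorem 3.4 (the regular representation on $H_2 \Delta_n H_2^*$), and commute with the horizontal Jones projections of the $B_i$-tower; hence each $\mathrm{Ad}\, u_k$ is a compatible automorphism of $P$, and $M$ is generated by $P$ together with $\{u_k\}$, giving $M = P \rtimes K$ once outerness is checked. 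For $H$: the minimal projections $\{g_y : y \in H\}$ of $\Delta_m \otimes 1$ and the fact that $A_{01} = $ (the relative commutant data for) $P^H$ should be read off from the bottom square; the point is that conjugation by $H_1 = H_H^*$ turns $\Delta_m$ into $H_1 \Delta_m H_1^*$, the diagonal of the group-type square for $H$, and the $H$-action on $P$ fixing $N$ pointwise is the dual (fixed-point) side — its generators act on $P$ and fix exactly $N$, again compatibly with the $B_i$-tower because they are built from the diagonal $\Delta_m$ which sits inside every $B_i$.

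The two remaining things to nail down are outerness and compatibility. Compatibility is where I expect the real content to lie: I must check that both families of generating automorphisms fix the canonical Jones projections $e_i$ of the tower on $B_0 \subset B_1$ and leave each $B_i$ invariant. This should follow because the $K$-unitaries $u_k$ lie in a fixed finite-dimensional algebra ($M_m(\mathbb{C}) \otimes \Delta_n$) that normalizes $B_1$ and commutes with the relevant conditional expectations — so $\mathrm{Ad}\, u_k$ commutes with $E_{B_0}$ and hence fixes $e_2$, and inductively fixes all $e_i$ and preserves all $B_i$ — and symmetrically for the $H$-side using that its generators come from $\Delta_m \otimes 1 = B_0$ itself, which is automatically fixed or normalized at every stage. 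Once compatibility is established, outerness is cheap: by Theorem 2.3 a compatible automorphism is inner iff its restriction to $B_1$ is $\mathrm{Ad}\, u|_{B_1}$ for some unitary $u \in B_0 = \Delta_m \otimes 1$; since $\Delta_m \otimes 1$ is abelian, $\mathrm{Ad}\, u|_{B_1}$ is trivial on $\Delta_m \otimes 1$ but can be nontrivial elsewhere, so I just verify that no nontrivial $u_k$-conjugation (resp. $H$-conjugation) agrees on $B_1$ with conjugation by a diagonal unitary — this reduces to the finite group-theoretic non-degeneracy of the Fourier matrices $H_K$, $H_H$ exactly as in Theorem 3.4. Assembling these pieces, $N \subset M$ is $P^H \subset P \rtimes K$ with $H$, $K$ acting outerly on $P$, which is the definition of a Bisch–Haagerup (group-type) subfactor; the main obstacle, as noted, is the careful verification of tower-compatibility of both actions, since that is precisely the hypothesis that unlocks the Section 2 machinery used in the later classification.
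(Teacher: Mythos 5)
Your overall plan is the one the paper uses: construct the intermediate factor $P$ from $B_0 = \Delta_m \otimes 1 \subset B_1 = M_m(\mathbb{C}) \otimes (H_2 \Delta_n H_2^*)$, exhibit unitaries whose adjoint actions are compatible outer automorphisms of $P$, and identify $N = P^H$ and $M = P \rtimes K$. Your $K$-side is essentially the paper's: $u_k = 1 \otimes v_k$ with $v_k \in \Delta_n$ giving the regular representation on $H_2 \Delta_n H_2^*$; note only that the ``computation'' you invoke is Theorem 3.3 (the Fourier transform/regular-representation computation), not Theorem 3.4, which is what you are proving.

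The $H$-side is where your sketch drifts. The generating unitaries are $u_h = (H_1 v_h H_1^*) \otimes 1$ with $v_h \in \Delta_m$; these live in $M_m(\mathbb{C}) \otimes 1 \subset B_1$ and normalize $B_0$, but they are \emph{not} elements of $\Delta_m \otimes 1 = B_0$ itself. Your claim that the $H$-generators ``come from $\Delta_m \otimes 1 = B_0$ itself, which is automatically fixed or normalized at every stage'' is wrong as stated, and if it were true it would be fatal: by Theorem 2.3, a compatible automorphism restricting on $B_1$ to $\mathrm{Ad}\,u$ with $u \in B_0$ is \emph{inner}, so you would have killed the outerness you need. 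The correct mechanism for compatibility is the one from Theorem 2.1: $u_h \in B_1$ normalizes $B_0$, hence $\mathrm{Ad}\,u_h|_{B_1}$ extends uniquely to a compatible automorphism of $P$. Outerness then follows because $u_h$ permutes the minimal projections of $\Delta_m \otimes 1$ nontrivially, which no $\mathrm{Ad}\,u$ with $u \in B_0$ abelian can do.

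The other omission is the index argument, which is what upgrades the containments to equalities. Compatibility and the fact that $u_h \in H(\Delta_m \otimes \Delta_n)H^*$ (abelian) give $N \subset P^H$, but you still need $[P:P^H] = |H| = [P:N]$ to conclude $N = P^H$; similarly $[M:P] = |K| = [P \rtimes K : P]$ forces $M = P \rtimes K$. Without these the two inclusions are merely contained in a Bisch--Haagerup inclusion, not equal to one. So: right skeleton, but fix the location of the $H$-unitaries, use the correct compatibility lemma, and close both sides with the index count.
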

\begin{proof}
Let $\Delta_{m,n} \subset M_{m,n}(\mathbb{C})$ be as above,
with the intermediate subfactor $P$ obtained as above from the inclusion
$B_0 \subset B_1$.
From the definition of the group Hadamard matrix, we
have $\{v_h\} \subset \Delta_m$ 
such that $H_1 v_h H_1^*$ acts via
the left regular representation of $H$ on $\Delta_m=l^{\infty}(H)$.
Then $u_h=(H_1 v_h H_1^*) \otimes 1$ is an
element of $M_m(\mathbb{C}) \otimes (H_2 \Delta_n H_2^*)$
which normalizes $\Delta_m \otimes 1$.  $ Ad u_h$
therefore induces a compatible action on $P$.  This action
is outer for $h \neq 1$,
since any element of $\Delta_m \otimes 1$
acts trivially by conjugation on $\Delta_m \otimes 1$,
while $u_h$ acts on this algebra by nontrivial permutation of the minimal
projections.
We therefore obtain an outer compatible action of $H$ on
$P$, induced by $Ad (u_h)$.

Since $u_h \in H(\Delta_m \otimes \Delta_n)H^*$,
and this algebra is abelian, $P^H$ contains
$H(\Delta_m \otimes \Delta_n)H^*$.  The action of $H$
is compatible, so $e_i \in P^H$ as well, implying that
$N \subset P^H$. The index $[P:P^H]=m=[P:N]$, so in fact $N=P^H$
with the action of $H$ induced by $Ad u_h$.

Likewise, we have $v_k \in \Delta_n$ acting via the left regular
representation of $K$ on
$H_2 \Delta_n H_2^*$.  Taking
$u_k = 1 \otimes v_k$, this gives
$\{u_k\}$ as elements of $\Delta_m \otimes \Delta_n$,
which act via conjugation on
$M_m(\mathbb{C}) \otimes(H_2 \Delta_n H_2^*)$ and normalize
(in fact act trivially on) $\Delta_m \otimes 1$.  $u_k$ commutes
with the horizontal Jones projections $\{e_i\}$, so
$Ad u_k$ fixes these projections and acts
compatibly on $P$.  $Ad u_k$ acts
nontrivially on the center of $M_m(\mathbb{C}) \otimes (H_2 \Delta_n H_2^*)$,
so the induced action is outer.
This means that we have the factor
$P \rtimes K$ embedded in $M$ as the algebra generated
by $P$ and $\{u_k\}$, with the action given by $Ad (u_k)$.
Since $[M:P]=n=[P \rtimes K:P]$, in fact $M = P \rtimes K$.

Therefore the subfactor is $P^H \subset P \rtimes K$,
and it may be analyzed using~\cite{BH}.
\end{proof}

Define a map $a$ from $H \oplus K$ to $M_m \otimes M_n$
by $a(g)=u_h $, $a(k)=u_k$.  Then
$a$ induces an outer action $\alpha$ of $H \oplus K$ on
$P$.
This action extends to the free product, and
$N \subset P \subset M$ is equal to $P^H \subset P \otimes K$
where the action of both $H$ and $K$ are given by $\alpha$.

We may now use the methods of Bisch and Haagerup to find the
principal graph.  From~\cite{BH}, we need only find the group
$G$ generated by $H$ and $K$ in the group $Out P=Aut P/Int P$.
In this case, $(H_1 \Delta_m H_1^*)\otimes 1$ commutes with
$1 \otimes \Delta_n$,
implying that $H$ and $K$ commute in $Out P$
and $G = H \oplus K$.  In
this case we know that the composite subfactor $P^H \subset
P \subset P\rtimes K$ is depth 2, and is equal to
$R^{H \oplus K} \subset R$.  So the tensor product construction
 described
above does not give
any new examples.

\subsection{The Twisted Tensor Product}

If $H_1$ and $H_2$ are Hadamard, then as in the previous
section their tensor product $H$ is Hadamard as well, with
$H_{ij,kl}=(H_1)_{ik}(H_2)_{jl}$.  The twisted tensor
product is the matrix
$H_{ij,kl}=(H_1)_{ik}(H_2)_{jl}\lambda_{il}$, where
each $\lambda_{il}$
is an arbitrary complex number of modulus 1.  
Each matrix entry of this twisted tensor product has
the same complex modulus, namely $(nm)^{-\frac{1}{2}}$.
To see that $H$ is still unitary,
we take $T$ to be the unitary element of the diagonal algebra
$\Delta_m \otimes \Delta_n$ with components $T_{gh,gh}=\lambda_{gh}$.
Then $H$ is equal to the matrix product
$(1 \otimes H_2)T(H_1 \otimes 1)$, which is unitary.

\begin{thm}
Let $H_1 = H_H^*$ and $H_2 =H_K$, for finite abelian groups $H$
and $K$.  Let $T \in \mathbb{T}^{|H||K|}$ be a twist.  Then
the twisted tensor product 
$H=(1 \otimes H_2)T(H_1 \otimes 1)$ induces a Hadamard subfactor
of Bisch-Haagerup type.
\end{thm}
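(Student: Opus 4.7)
The plan is to mirror the untwisted argument of Theorem 3.4, taking $B_0 = \Delta_m \otimes 1$ and $B_1 = H(M_m(\mathbb{C}) \otimes \Delta_n)H^*$ as the intermediate algebras, and verifying that the twist $T \in \Delta_m \otimes \Delta_n$ does not disrupt the essential constructions. First I would observe that $B_1 = M_m(\mathbb{C}) \otimes (H_2 \Delta_n H_2^*)$ exactly as in the untwisted case: conjugation by $(H_1 \otimes 1)$ preserves $M_m(\mathbb{C}) \otimes \Delta_n$ since $H_1 M_m(\mathbb{C}) H_1^* = M_m(\mathbb{C})$, and conjugation by $T$ preserves it since $T \in \Delta_m \otimes \Delta_n \subset M_m(\mathbb{C}) \otimes \Delta_n$.

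Next I would verify the hypotheses of Theorem 3.1, which then produces the intermediate subfactor $P$. The upper quadrilateral
$$\commsq{B_0}{B_1}{\Delta_m \otimes \Delta_n}{M_m(\mathbb{C}) \otimes M_n(\mathbb{C})}$$
commutes by the same argument as in Theorem 3.4, which does not involve $T$. For the lower quadrilateral
$$\commsq{\mathbb{C}}{H(\Delta_m \otimes \Delta_n) H^*}{B_0}{B_1},$$
I would compute $E_{B_0}(H p H^*)$ for $p$ a minimal projection of $\Delta_m \otimes \Delta_n$: using $H_{(g',k'),(g,k)} = (H_1)_{g',g}(H_2)_{k',k}\lambda_{g',k}$ together with $|\lambda_{g',k}| = 1$, a direct calculation shows $(H p_{g,k} H^*)_{(g',k'),(g',k')} = (mn)^{-1}$ for every $(g',k')$, whence $E_{B_0}(H p_{g,k} H^*) = (mn)^{-1} \cdot 1 \in \mathbb{C}$.

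To construct the Bisch-Haagerup actions, I would set $\tilde{u}_h = H(v_h \otimes 1)H^*$ and $u_k = 1 \otimes v_k$, where $v_h \in \Delta_m$ and $v_k \in \Delta_n$ are the unitaries of Theorem 3.3 implementing the regular representations. Manifestly $\tilde{u}_h \in H(\Delta_m \otimes \Delta_n)H^*$; expanding $\tilde{u}_h = (1 \otimes H_2) T((H_1 v_h H_1^*) \otimes 1) T^*(1 \otimes H_2^*)$, it lies in $B_1$. The key computation is $\tilde{u}_h(x \otimes 1)\tilde{u}_h^* = (h \cdot x) \otimes 1$ for $x \in \Delta_m$: since $x \otimes 1$ is diagonal it commutes with $T$, so the twist drops out and only the intrinsic action of $(H_1 v_h H_1^*)$ on $\Delta_m$ survives. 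The $u_k$ verification is essentially identical to the untwisted case since $u_k$ involves no twist. Outerness of both compatible actions then follows from Theorem 2.4, and the identifications $P^H = N$ and $P \rtimes K = M$ follow from the usual index arguments using that each $\tilde{u}_h$ lies in the abelian algebra $H(\Delta_m \otimes \Delta_n)H^* \subset N \subset P$, which is therefore fixed by the $H$-action along with all Jones projections.

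The main obstacle is the lower commuting-square verification and checking that the $H$-action survives the twist; both of these rely on the observation that $T$ is diagonal and therefore commutes with $\Delta_m \otimes 1$, so the phases $\lambda_{g,k}$ cancel in modulus in the commuting-square computation and disappear entirely from the normalizer computation.
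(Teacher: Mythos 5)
Your proposal is correct and follows essentially the same route as the paper: identify $B_1 = M_m(\mathbb{C}) \otimes (H_2\Delta_n H_2^*)$, invoke Theorem 3.1 for the intermediate factor $P$, and implement $H$ and $K$ by $H(v_h\otimes 1)H^*$ and $1\otimes v_k$ with the observation that $T$, being diagonal, commutes with $\Delta_m\otimes 1$ so the twist drops out of the action on $B_0$. Your direct verification of the lower commuting square is redundant (Theorem 3.1 shows it commutes automatically once the outer and upper squares do), and the outerness criterion you want is Theorem 2.3, not 2.4, but these are cosmetic points.
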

\begin{proof}
Applying the Hadamard matrix $H$ to the tower of algebras
$$\mathbb{C} \subset \Delta_m \otimes 1 \subset \Delta_m \otimes \Delta_n
\subset  M_m(\mathbb{C}) \otimes \Delta_n \subset M_m(\mathbb{C})
\otimes M_n(\mathbb{C})$$
gives the following diagram:

$$\begin{array}{ccccc}
\Delta_m \otimes \Delta_n & \subset &M_m(\mathbb{C})\otimes \Delta_n & \subset &
M_m(\mathbb{C})\otimes M_n(\mathbb{C}) \\
 & &\cup& & \\
\cup& &M_m(\mathbb{C}) \otimes 1& &\cup\\
 &\subset& &\subset& \\
\Delta_m \otimes 1 & & & & H(M_m (\mathbb{C}) \otimes \Delta_n)H^* \\
 &\subset& &\subset& \\
\cup& &H(M_m(\mathbb{C}) \otimes 1)H^*& &\cup\\
 & &\cup& & \\
\mathbb{C} & \subset & H(\Delta_m \otimes 1)H^* & \subset &
H(\Delta_m \otimes \Delta_n)H^*\\
\end{array}$$

There are now two inclusions which are not obviously correct,
namely $\Delta_m \otimes 1\subset H(M_m(\mathbb{C}) \otimes 1)H^*$
and $M_m(\mathbb{C}) \otimes 1 \subset H( M_m(\mathbb{C}) \otimes \Delta_n)H^*$.

Note first that $H_1 \otimes 1$ normalizes $(M_m(\mathbb{C}) \otimes 1$,
so $H(M_m(\mathbb{C}) \otimes 1)H^* =
(1 \otimes H_2)T(M_m(\mathbb{C} \otimes 1)T^*(1 \otimes H_2)$.
$\Delta_m \otimes 1$ is contained
in $M_m(\mathbb{C}) \otimes 1$ and commutes with both $T$ and $1 \otimes H_2$,
so $\Delta_m \otimes 1 \subset H (M_m(\mathbb{C}) \otimes 1)H^*$.

Now we consider $H(M_m(\mathbb{C}) \otimes \Delta_n)H^*$.
$M_m(\mathbb{C}) \otimes \Delta_n$ is
the commutant in $M_m(\mathbb{C}) \otimes M_n(\mathbb{C})$
of $1 \otimes \Delta_n$, so
$H(M_m(\mathbb{C}) \otimes \Delta_n)H^* = (H(1 \otimes \Delta_n)H^*)'$ as well.
But $H_1 \otimes 1$ and $T$ commute with $ 1 \otimes \Delta_n$, so
this is the commutant of
$(1 \otimes H_2)(1 \otimes \Delta_n)(1 \otimes H_2^*)$.  We conclude
that  $$ H(M_m(\mathbb{C}) \otimes \Delta_n)H^* =
M_m(\mathbb{C}) \otimes (H_2 \Delta_n H_2^*)$$
which includes $M_m(\mathbb{C}) \otimes 1$.

So all inclusions in the above diagram are correct.
As before,
the quadrilateral
$$\commsq{\Delta_m \otimes 1}{H(M_m(\mathbb{C}) \otimes \Delta_n)H^*}
{\Delta_m \otimes \Delta_n}{M_m(\mathbb{C}) \otimes M_n(\mathbb{C})}$$
commutes, since the algebra $H(M_m(\mathbb{C}) \otimes \Delta_n)H^*$
is not affected by the choice of $T$.  All horizontal
inclusions are connected, so from theorem
3.1 we again have
an intermediate subfactor $P$, obtained from iterating the basic
construction on $$B_0 = \Delta_m \otimes 1
\subset M_m(\mathbb{C}) \otimes (H_2 \Delta_n H_2^*)=B_1$$

We will now describe the action of $H$ and $K$ (and their free product)
induced on $P$ by the twisted tensor product construction.

Exactly as in section 3.4, we have $v_k \in \Delta_m$,
$u_k = 1 \otimes v_k \in \Delta_m \otimes \Delta n$
which normalize $B_1$ and commute with $B_0$.  These act via the left regular
representation on $Z(B_1)$ and induce an outer compatible action of
$K$ on $P$.  The same index argument as before shows that
$M = P \rtimes K$.

The action of $H$ is slightly different.  Again we have
$v_h \subset \Delta_m$ with $H_1 v_h H_1^*$ acting via the left
regular representation on $\Delta_m$.  Now, however, we define
$u_h$ by $u_h=H(v_h \otimes 1) H^*$, which reduces to the earlier
definition of $1 \otimes (H_2 v_h H_2^*)$
if $T$ is the identity.  As before, $u_h$ is contained in the abelian
algebra $H (\Delta_m \otimes \Delta_n)H^* \subset N$ and commutes
with the $e_i$'s.  Therefore, an index argument again gives
$P^H = N$, where the action of $H$ is defined by taking the
compatible automorphisms induced by the $Ad u_h$'s.
\end{proof}

\subsection{The action of $H$ and $K$}

We now give a bit more detail on
the role of the twist in producing the actions of $H$ and $K$
for a twisted tensor product Hadamard subfactor,
with all notation as in the previous section.

Let $\tilde{T}=(1 \otimes H_2)T(1 \otimes H_2^*)$.
Then $\tilde{T} \in B_0' \cap B_1$.
Since $H_1 \otimes 1$ and $1 \otimes H_2$ normalize
$M_m(\mathbb{C}) \otimes 1$,
$$Ad (\tilde{T})(M_m(\mathbb{C}) \otimes 1)
= Ad(\tilde{T}(H_1 \otimes H_2))(M_m(\mathbb{C})\otimes 1)
= Ad H(M_m(\mathbb{C}) \otimes 1)$$  So $\tilde{T}$ gives
the difference between these two matrix algebras.

We have $u_h \in H(M_m(\mathbb{C}) \otimes 1)H^*$, while
$1 \otimes u_k$ commutes with $M_m \otimes 1$.  It follows that
if $\tilde{T}$ normalizes $M_m \otimes 1$, then the two
group actions commute with each other and we are again in the depth
2 case.   Conversely, for general $T$, the two algebras
$M_m \otimes 1$ and $Ad T(M_m \otimes 1)$
are different, and we will not expect the actions to commute.

As before, let $a(h)=(H_1 v_h H_1^*) \otimes 1$,
$a(k)=1 \otimes v_k$.  This still provides an induced action $\alpha$
of $H \oplus K$
on $P$, but $a(h)$ does not commute with $N$, so this direct sum no longer
describes the structure of the subfactor.  Instead, note
that $u_h = H(v_h \otimes 1)H^*=
Ad \tilde{T}(a(h))$, since $1 \otimes H_2$ commutes
with $a(h)$.
$\tilde{T}$ itself induces a compatible automorphism $\tau$ on $P$, since
$Ad \tilde{T}$ normalizes $B_1$ and $B_0$.  It follows from the
properties of compatible automorphisms that
$u_h$ induces the compatible automorphism $\tau \alpha_h \tau^{-1}$.

This allows us to describe the correct action $\beta$ of the free
product $H \ast K$.  Let $b(h)=\tilde{T}a(h)\tilde{T}^*$, $b(k)=a(k)$.
$b$ induces the compatible action $\beta$, defined
by $\beta_h=\tau \alpha_h \tau^{-1}$, $\beta_k=\alpha_k$.
We have $N=P^H$, $M=P \rtimes K$ for this action, so the properties
of the subfactor
$N \subset M$ may be determined by examining $b(H \ast K)$.
For general $T$, $\tau \alpha_h \tau^{-1}$ will not commute
with $\alpha_k$, even as elements of $Out P$.  This construction will therefore
provide nontrival new examples.

There are still significant restrictions on the action $\beta$
of $H \ast K$.  $Ad b(h)$ acts on the center of $\Delta_m \otimes 1$
via permutation, but acts trivially on $1 \otimes (H_2 \Delta_n H_2^*)$;
$Ad b(k)$ does the opposite.
Any word $w$ in $H\ast K$ whose restriction to $B_1=M_m(\mathbb{C})
\otimes (H_2 \Delta_n H_2^*) \subset P$
permutes the basis projections of $B_0' \cap B_1 =
\Delta_m \otimes
(H_2 \Delta H_2^*)$
nontrivially must be outer, since $Ad u$ acts trivially on this set for
$u \in B_0$.  Let $\epsilon$ be the abelianization map from
$H \ast K$ to $H \oplus K$; it follows that if $\epsilon(w)$ is
not the identity, then $w$ is outer.

We conclude that the inner subgroup $H \ast K \cap Int$ is contained in the
first commutator subgroup $N_1$ of $H \ast K$, where $N_1$ is the
group generated by elements of the form $hkh^{-1}k^{-1}$.

Now let $x$ be in $N_1$,  $b(x) \in M_m(\mathbb{C}) \otimes M_n(\mathbb{C})$.
$Ad b(x)$ acts trivially on $\Delta_m \otimes (H_2 \Delta_n H_2^*)$
since the induced permutations of $\Delta_m \otimes 1$ and
$1 \otimes (H_2 \Delta_n H_2^*)$
are trivial.  This means that $b(x)$ commutes with this algebra.
But $\Delta_m \otimes (H_2 \Delta_n H_2^*)$
is maximal abelian in
$M_m(\mathbb{C}) \otimes M_n(\mathbb{C})$,
so $b(x)$ must be contained in $B_0' \cap B_1$. Since
this algebra is abelian,
$b(x)$ must commute with
$b(n)$ for every
other element $n$ of $N_1$.  This means that the induced actions on $P$
commute as well, and $b(N_1)$ is abelian.
Taking $G = H \ast K / Int$, and $N = N_1/Int$ the first commutator
subgroup of $G$, it follows that $N$ is abelian as well.

Each element of any group with the above properties may be written uniquely
as $hkn$, $h \in H$, $k \in K$, $n \in N$.  We
therefore write $G=H \ast K / Int = HKN$.
$N$ is an abelian group generated by the $(|H|-1)(|K|-1)$ elements
of the form $hkh^{-1}k^{-1}$, $h \neq 1 \in H$, $k \neq 1 \in K$.
$N$ is a normal subgroup of $H \ast K$.
We may determine $N=N_1/Int$, and therefore
$G$ itself, by determining which elements
$n$ of $N_1$ have $\beta_n$ outer. 

From section 2.7, $\beta_n$ is inner if and only if 
$Ad b(n)|_{B_1}=Ad u|_{B_1}$ for some unitary
$u \in B_0$, with $B_0$ and $B_1$ as above.  This will be true if and only
if $b(n)=uv$, where $u$ and $v$ are unitary elements of
$B_0$, $Z(B_1)$ respectively.
This allows us to readily determine the order in $Out P$
of each generator of $N_1$,
and hence the structure of the group $N_1 / Int$.

To find $H \ast K / Int$, we
must also know how $H$ and $K$ act on $N$; i.e.,
$hnh^{-1}$ and $knk^{-1}$ for $h \in H$, $k \in K$, $n \in N$.
To find these, we consider $B_0' \cap B_1 =
l^{\infty}(H) \otimes l^{\infty}(K)= l^{\infty}(H \oplus K)$.
In this representation, two elements of $B_0' \cap B_1$
induce the same local automorphism up to inner perturbation if they differ by
a unitary $u$ with coordinates $u(h,k)=f(h)g(k)$, where $f$
and $g$ are functions from $H$ and $K$ respectively to the complex
scalars of modulus 1.
We may therefore put each element of $N$ in a unique standard form,
with $n_{std}(1,k)=1=n_{std}(h,1)$
With this description of $N$, $Ad b(H)$ and $Ad b(K)$ act
via the left regular representation $\rho$ on the appropriate component.
We know that $hnh^{-1}$ must be equivalent to some
$n' \in N$, and we can readily determine which one by
putting $hnh^{-1}$ in standard form.  The same holds
true for the action of $K$.

The above description of $N$ provides a particularly good
way of writing the generators.  For $x=b(hkh^{-1}k^{-1}) \in b(N)$,
we know that $$x=\tilde{T}a(h)\tilde{T}^*a(h)\tilde{T}a(g^{-1})
\tilde{T}^*a(h^{-1})$$
$\tilde{T}$ is itself an element of $B_0' \cap B_1 = l^{\infty}(H\oplus K)$.
$Ad a(h)$ and $Ad a(k)$  act on
$ l^{\infty}(H \oplus K)$ via the left regular representation $\rho$, so we have
$$x=\tilde{T} \rho_h(\tilde{T}^*) \rho_{hk}(\tilde{T})
\rho_{hkh^{-1}}(\tilde{T}^*)a(h)a(k)a(h^{-1})a(k^{-1})$$
$[\rho_h, \rho_k]=0=[a(h),a(k)]$, so this is just
$T \rho_h(T*) \rho_k(T^*) \rho_{hk}(T)$.  This gives
us $x=b(hkh^{-1}k^{-1})$ as an element of $B_0' \cap B_1$.

This allows the complete
computation of $G= H\ast K / Int = HKN$ for any
twisted tensor product Hadamard subfactor.  Using the methods of~\cite{BH},
we may then obtain the principal graphs.

Since $B_0$ is fixed by $K$,
multiplying $\tilde{T}$ by an inner $z \in B_0$ 
will not affect any of the generators of $N$:
the change to $b(hkh^{-1}k^{-1})$ will be multiplication by
$$z \rho_h(z^*) \rho_k(z^*) \rho_{hk}(z) = z \rho_h(z^*) z^* \rho_h(z) = 1$$
for any $h \in H$, $k \in K$.
The same is true of any perturbation coming from $Z(B_1)$.

We may therefore put $\tilde{T}$ itself in standard form without
affecting the action of $\beta(N)$.  The size of the group $G$
is determined by $\beta|_N$; as we will see in section 5.1,
the 3-cocycle obstruction associated with the action of $G$
is as well.  The group $G$ and its 3-cocycle determine
the standard invariant of a Bisch-Haagerup subfactor(see section 4,
~\cite{BDG2}),
so we only
need to consider twists in standard form.
This will give us a better idea of the size
of the space of examples obtained from this construction.

\section{Classification of Bisch-Haagerup subfactors}

\subsection{Introduction}

Before giving specific examples, we will discuss the classification
up to subfactor isomorphism of the subfactors obtained from this
twisted tensor product, in the finite depth case.
Let $M^H \subset M \rtimes K$ be a hyperfinite $II_1$
subfactor.  The principal graph is determined entirely from the group
$G=H\ast K/Int P$, as is the dual principal graph~\cite{BH}.  Throughout
this section, we will always require the subfactor to be finite
depth; equivalently, we will always take this group $G$ to be finite.

$G$ is contained
in $Out P$, and does
not always lift to $Aut P$.  
For a finite group, the obstruction to this lift
is an element $\omega$ of $H^3(G)$.  Two representations
$\rho, \sigma$ of $G$ in $Out P$ are said to be outer conjugate
if there exists $\alpha \in Out P$ with
$\alpha\sigma_g\alpha^{-1}$ equivalent to $\rho_g$
in $Out P$ for all $g \in G$.  From \cite{JoT}, this will occur
if and only if the associated 3-cocycles are the same.

Bisch, Das, and Ghosh have recently shown that $H$, $K$, $G$, and
$\omega$ determine the planar algebra (or equivalently, the
standard invariant) of the subfactor, in the case $\omega=0$~\cite{BDG}
and more recently for general $\omega$~\cite{BDG2}.  If $G$
is finite, then the subfactor is finite depth, and therefore
strongly amenable.
Such are classified up
to isomorphism by
their standard invariants~\cite{PoA}, so this implies
that the outer conjugacy class of $G$ determines the isomorphism class
of the subfactor in this case.

We provide a more elementary proof of this result below (which requires
some additional assumptions on the actions) and also
discuss the converse implication.

\subsection{Invariants of Group Actions}

In this section we summarize some results from~\cite{JoT}, which
we will use to classify our group actions.

Let $\tilde{G}$ be a finite group acting via $\rho$ on the hyperfinite $II_1$
factor $R$, with inner subgroup $S$.  Let $S$
be implemented by unitaries $u_S$, i.e. $\rho_s = Ad u_s$ for $s \in S$.
For $g \in \tilde{G}$, $s, s' \in S$,
we have $\alpha_g(u_{g^{-1}sg})=\lambda(g,s)u_s$,
$u_s u_{s'}=\mu(s,s')u_{ss'}$.  Here $\lambda$
is a function from $G \times N$ to the complex scalars
of modulus 1, and $\mu$ is a similar function on $N \times N$.

Jones defines the characteristic invariant
$\Lambda_{\tilde{G},S}$ as the set of all such pairs
$(\lambda, \mu)$ which are allowable, in the sense that they
can actually arise from some action of $\tilde{G}$ on $R$.
Let $L$ be the left regular representation
of $\tilde{G}$ on $l^{\infty}(\tilde{G})$.  The following definition
is from~\cite{JoT}:

\begin{dfn}
Two actions $\rho$, $\sigma$ of
$\tilde{G}$ with inner subgroup $S$
are \begin{bf}stably conjugate\end{bf} if
$\rho \otimes Ad L$ is conjugate to $\sigma \otimes Ad L$.
\end{dfn}

Two actions of $\tilde{G}$ are stably conjugate if and only if their
characteristic invariants (and inner subgroups)
are the same.

Jones also defines the inner invariant, which is determined
from the restriction of the trace to $\mathbb{C}[S]$.  Two
stably conjugate actions with the same inner invariant
are actually conjugate.

An action of $\tilde{G}$ on R provides a representation of
the kernel $G=\tilde{G}/S$ in $Out R$.  This representation
lifts to an action of $G$ on $R$ if and only if
the associated obstruction $\omega \in H^3(G)$ is zero.
$\omega$ may be computed from the characteristic invariant.
In fact, from~\cite{JoT} there is an exact sequence
$H^2(\tilde{G}) \rightarrow \Lambda(\tilde{G},S) \rightarrow H^3(G)$.

For some particular $(\lambda, \mu)$ it may be difficult
to determine if the corresponding 3-cocycle comes from
a coboundary.  One useful method is the restriction to
cyclic subgroups.  If $(\lambda, \mu)$ comes
from some $\epsilon \in H^2(\tilde{G})$, then the restriction
to any cyclic subgroup $C \subset \tilde{G}$ also comes from a 2-cocycle,
and therefore maps to zero in $H^3$.  Therefore
if $(\lambda, \mu)|_C$ maps to a nonzero element
of $H^3(C)$, the entire action has nontrivial 3-cocycle.
Since $H^2(C)=0$ for cyclic groups, this will be true
if $(\lambda, \mu)_C$ itself is nonzero.

\subsection{General Assumptions}

Let $\rho$ and $\sigma$ be two actions of the free product
$H \ast K$ on hyperfinite $II_1$ factors $M, P$ respectively,
i.e. maps from $H \ast K$ to $Aut M$, $Aut P$.
For both actions, we will take the restrictions to $H$ and $K$
to be outer.
Let $\tilde{G} = H \ast K / (\ker(\rho) \cap \ker(\sigma))$,
which we will always take to be finite.
Here $\ker(\rho)$ is the subset of $H \ast K$ which
$\rho$ sends to the identity map, and likewise for $\sigma$.
We will then have actions of
$\tilde{G}$ on $M$ and $P$.
Let $S$ be the inner subgroup $\tilde{G} \cap Int $
of $\tilde{G}$,
which we take to be the same for the actions
$\rho$ and $\sigma$.
Then $G = \tilde{G} / S = H \ast K / Int$
is the same for both actions as well.
Note that $\rho$ and $\sigma$ are determined by
their respective restrictions to $H \cup K$.

We will consider the subfactors $M^H \subset M \rtimes K$,
$P^H \subset P \rtimes K$.  The action of $K$
will be implemented in $M \rtimes K$ by unitaries $u_k$ and
in $P \rtimes K$ by unitaries $v_k$.

A unitary cocycle for an action $\rho$ of $\tilde{G}$ is a set
of unitaries $\{w_g|g \in \tilde{G}\}$ with $w_g \rho_g(w_h)=w_{gh}$.
This condition is sufficient for $Ad w_g \circ \rho_g$
to itself be an action, so such a cocycle may be thought
of as an inner perturbation of the original action.
However, since $Ad \lambda w_g = Ad w_g$ for $\lambda$ any complex
scalar of modulus 1,
the above condition is stronger than necessary.

Let $\epsilon$ be a
function from $\tilde{G} \times \tilde{G}$ to the complex numbers of modulus 1.
We may then
take $\{w_g\}$ with $w_g \rho_g(w_h)=\epsilon(g,h)w_{gh}$.
For such $\{w_g\}$, $Ad w_g \circ \rho_g$ is still an action.
Associativity of multiplication gives $\epsilon(x,yz)\epsilon(y,z)=
\epsilon(x,y)\epsilon(xy,z)$.  These are the cocycle relations
defining elements of $H^2(\tilde{G})$~\cite{JoT},
so we may identify
$\epsilon$ as a scalar 2-cocycle in this space.
We will call $\{w_g\}$ a twisted unitary cocycle with
associated scalar cocycle $\epsilon$.

We have $S \subset \tilde{G}$ a normal subgroup.
Due to linearity of the map from $H^2(G)$ to $\Lambda_{G,N}$
in~\cite{JoT},
the characteristic invariant of $Ad w \circ \rho$
is the same as the characteristic invariant of $\rho$
plus the element of $\Lambda_{\tilde{G},S}$ induced by $\epsilon$.

\subsection{Stable Conjugacy Implies Subfactor Isomorphism}

\begin{lem}
Conjugacy of two actions of $G=H \ast K / Int$ with outer
restrictions to $H$ and $K$
implies subfactor isomorphism of the corresponding Bisch-Haagerup
subfactors.
\end{lem}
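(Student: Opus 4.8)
The idea is that a conjugacy between the two actions extends canonically to an isomorphism of the ambient crossed products that carries one fixed-point subfactor onto the other. So let $\theta \colon M \to P$ be a $*$-isomorphism with $\theta \circ \rho_g = \sigma_g \circ \theta$ for all $g \in H \ast K$ (equivalently, for all $g \in H \cup K$, which generate). The first step is that $\theta$ restricts to an isomorphism $M^H \to P^H$: for $h \in H$ and $x \in M$ one has $\rho_h(x) = x$ iff $\sigma_h(\theta(x)) = \theta(\rho_h(x)) = \theta(x)$, so $\theta(M^H) = P^H$, an isomorphism of $II_1$ factors (both fixed-point algebras are factors, the $H$-actions being outer).

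The second step is to push $\theta$ through the crossed products. Writing $u_k$, $v_k$ for the unitaries implementing $\rho|_K$, $\sigma|_K$ in $M \rtimes K$ and $P \rtimes K$, every element of $M \rtimes K$ has a unique expansion $\sum_{k \in K} x_k u_k$ with $x_k \in M$, and similarly in $P \rtimes K$; this uniqueness is precisely the statement that the crossed product by an outer action of a finite group is freely generated by the base and the implementing unitaries subject only to the covariance relations. Define $\Theta \colon M \rtimes K \to P \rtimes K$ by $\Theta(\sum_k x_k u_k) = \sum_k \theta(x_k) v_k$. Checking on generators, $\Theta(u_k)\Theta(x)\Theta(u_k)^* = v_k \theta(x) v_k^* = \sigma_k(\theta(x)) = \theta(\rho_k(x)) = \Theta(u_k x u_k^*)$ and $\Theta(u_k)\Theta(u_{k'}) = v_{kk'} = \Theta(u_{kk'})$, so $\Theta$ is a $*$-homomorphism; it is visibly $*$-preserving, and the analogous formula built from $\theta^{-1}$ and $v_k \mapsto u_k$ is a two-sided inverse, so $\Theta$ is an isomorphism. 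Since $\Theta|_M = \theta$ and $\theta(M^H) = P^H$, the map $\Theta$ carries the inclusion $M^H \subset M \rtimes K$ onto $P^H \subset P \rtimes K$, which is the asserted subfactor isomorphism.

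The only point that should be cited rather than rechecked is the uniqueness of the crossed-product expansion (equivalently, faithfulness of the canonical conditional expectation $M \rtimes K \to M$), which makes $\Theta$ well defined; granting this, the lemma is bookkeeping, and I do not expect a deep obstacle. The one place where genuine work could be hidden is if "conjugacy of two actions of $G$" is read at the level of $Out$ rather than $Aut$ — i.e. $\theta \sigma_g \theta^{-1} \equiv \rho_g \bmod Int$ only: then one must first absorb the discrepancy $g \mapsto z_g$ (a twisted unitary cocycle for $\rho$, since $\rho$ and $\theta \sigma \theta^{-1}$ restrict to honest actions of $H$ and of $K$) into an inner perturbation of $\theta$, which is possible exactly when that cocycle is a coboundary up to scalars; under the standing hypotheses this follows from the cohomological description in Section 4.3, and this reduction — not the crossed-product manipulation above — would then be the main obstacle.
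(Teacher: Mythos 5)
Your argument is correct and follows the same route as the paper: a conjugacy $\theta$ (the paper's $\alpha$, in the opposite direction) extends to the crossed products by sending implementing unitaries to implementing unitaries, and automatically carries one fixed-point algebra onto the other. You spell out the well-definedness of the extension more carefully than the paper, which simply asserts that $\alpha$ extends via $\alpha(v_k)=u_k$; your closing remark correctly identifies that the genuinely hard case — honest outer conjugacy rather than conjugacy — is not this lemma but the content of Theorem 4.1, which the paper addresses separately using the cocycle machinery.
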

\begin{proof}
If $\rho$ and $\sigma$ are conjugate via some isomorphism
$\alpha: P \rightarrow M$, then $\alpha$ extends to
$P \rtimes K$ via $\alpha(v_k)=u_k$.  Since
$\alpha \sigma_h \alpha^{-1} = \rho_h$ for $h \in H$, we have
$\alpha(P^H)=M^H$.  This gives isomorphism of subfactors.
\end{proof}
\begin{lem}
Let $\rho$ give outer actions of the finite groups $H$
and $K$ on the hyperfinite $II_1$ factor $M$, with
$\tilde{G} = H \ast K / \ker(\rho)$ a finite group.
Let $L$ be the left regular action of
$\tilde{G}$ on
$l^{\infty}(\tilde{G})=L^2(\tilde{G})$.  Then 
$\rho \otimes Ad L$ gives an action of $H \ast K$ on
$M \otimes \B(L^2(\tilde{G}))$, where $Ad L$ is the adjoint
action of $L$.
In this case
$\rho$ and $\rho \otimes Ad L$ induce the same subfactor.
\end{lem}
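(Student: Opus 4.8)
The plan is to realize both the subfactor coming from $\rho$ and the one coming from $\rho\otimes\mathrm{Ad}\,L$ inside a common $\mathrm{II}_1$ factor and then move one onto the other by an automorphism. Write $\mathcal B:=\B(L^2(\tilde G))\cong M_{|\tilde G|}(\mathbb C)$, set $\tilde M:=M\otimes\mathcal B$, and let $\pi_0:=\rho\otimes\mathrm{id}_{\mathcal B}$ and $\pi_1:=\rho\otimes\mathrm{Ad}\,L$ be the two resulting actions of $H\ast K$ on $\tilde M$. Since $\ker\rho$ acts trivially, $g\mapsto W_g:=1\otimes L_{\bar g}$ (with $\bar g$ the image of $g$ in $\tilde G$) is a genuine unitary representation of $H\ast K$ in $\tilde M$; its values lie in $1\otimes\mathcal B\subset(M\otimes 1)'$ and are fixed by $\pi_0$, so $\{W_g\}$ is a genuine unitary $1$-cocycle for $\pi_0$ and $\pi_1=\mathrm{Ad}(W)\circ\pi_0$. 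I would then prove two things: (i) $\pi_0$ induces a subfactor isomorphic to the one induced by $\rho$; (ii) $\pi_1$ and $\pi_0$ induce the same subfactor.

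For (i): because $\rho_h\otimes\mathrm{id}$ fixes $1\otimes\mathcal B$ pointwise, $\tilde M^{\pi_0|_H}=M^H\otimes\mathcal B$, and because $\mathrm{Ad}\,L$ plays no role in the $K$-crossed product, $\tilde M\rtimes_{\pi_0|_K}K=(M\rtimes_\rho K)\otimes\mathcal B$. So the $\pi_0$-subfactor is the amplification $(M^H\subset M\rtimes_\rho K)\otimes\mathcal B$, which has the same Jones tower and relative commutants as $M^H\subset M\rtimes_\rho K$; being finite depth, hence strongly amenable, the two are isomorphic by \cite{PoA} (equivalently, compress by a minimal projection of $\mathcal B$).

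For (ii) I would first dispose of the $K$-direction. Let $\mu_k$, $\lambda_k$ be the implementing unitaries for $\pi_1|_K$, $\pi_0|_K$. Using $W_k\in\tilde M$, $\pi_1(k)=\mathrm{Ad}(W_k)\circ\pi_0(k)$, and $W_k\,\pi_0(k)(W_{k'})=W_{kk'}$, one checks that $\mu_k\mapsto W_k\lambda_k$ (identity on $\tilde M$) extends to a $*$-isomorphism $\tilde M\rtimes_{\pi_1|_K}K\xrightarrow{\ \sim\ }M_0:=\tilde M\rtimes_{\pi_0|_K}K$ which is the identity on $\tilde M$ (surjectivity plus factoriality of the crossed products gives injectivity). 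Under it the $\pi_1$-subfactor becomes $\tilde M^{\pi_1|_H}\subset M_0$, so it remains to carry $\tilde M^{\pi_1|_H}$ onto $\tilde M^{\pi_0|_H}=M^H\otimes\mathcal B$ by an automorphism of $M_0$. Now $W|_H$ is a unitary $1$-cocycle for the \emph{outer} action $\pi_0|_H$ of the finite group $H$ on the hyperfinite $\mathrm{II}_1$ factor $\tilde M$, so by the cocycle vanishing theorem for finite group actions on the hyperfinite $\mathrm{II}_1$ factor \cite{JoT} there is a unitary $u\in\tilde M$ with $W_h=u\,\pi_0(h)(u^*)$ for all $h\in H$. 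Then $\mathrm{Ad}(u)$ conjugates $\pi_0|_H$ to $\pi_1|_H$, hence carries $M^H\otimes\mathcal B$ onto $\tilde M^{\pi_1|_H}$; and since $u\in\tilde M\subset M_0$, the \emph{inner} automorphism $\mathrm{Ad}(u^{-1})$ of $M_0$ carries $\tilde M^{\pi_1|_H}\subset M_0$ back onto $M^H\otimes\mathcal B\subset M_0$, i.e.\ onto the $\pi_0$-subfactor. Chaining the isomorphisms from (i), the $K$-identification, and $\mathrm{Ad}(u^{-1})$ finishes the proof.

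The hard part is the cocycle vanishing step — that is the only place where a genuine theorem (rather than bookkeeping) enters. Everything else is routine manipulation of crossed products and fixed-point algebras; the one thing to watch throughout is that all actions in sight restrict to genuinely outer actions of the relevant finite groups, so that the fixed-point algebras and crossed products appearing are $\mathrm{II}_1$ factors and the various surjective $*$-homomorphisms are automatically isomorphisms.
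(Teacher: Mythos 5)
Your proof is correct and follows the same broad strategy as the paper --- reduce to $\rho\otimes 1$, then absorb the cocycle $1\otimes L$ by cocycle vanishing for outer finite-group actions on the hyperfinite $II_1$ factor --- but with two genuine differences in execution. For the reduction to $\rho\otimes 1$, you appeal to amplification together with Popa's classification of strongly amenable subfactors (or compression by a minimal projection); the paper instead observes that $\rho$ and $\rho\otimes 1$ have the same characteristic invariant and, since $\tr(x\otimes 1)=\tr x$, the same inner invariant, hence are \emph{conjugate} actions by~\cite{JoT}, so that Lemma~4.1 applies directly. Your route is valid under the stated finiteness hypothesis (finite $\tilde G$ implies finite $G$, hence finite depth, hence strong amenability), but Popa's theorem is a heavier tool than the paper needs. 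For the main step the paper applies cocycle vanishing \emph{twice}, producing $w_H$ with $Ad\, w_H\circ(\rho_h\otimes 1)\circ Ad\, w_H^{*}=\rho_h\otimes Ad\, L(h)$ and an analogous $w_K$, and then defines $\alpha(x)=w_H x w_H^{*}$, $\alpha(u_k)=w_H w_K^{*} v_k w_K w_H^{*}$; you instead note that $W|_K=1\otimes L|_K$ already lies in $\tilde M$, so the two $K$-crossed products are canonically isomorphic via the relabeling $\mu_k\mapsto W_k\lambda_k$ fixing $\tilde M$ pointwise, and cocycle vanishing is needed only once, on the $H$ side. This is a real simplification: one can check the paper's $w_K$ is in fact superfluous (a short computation gives $w_K^{*} v_k w_K=(1\otimes L(k)^{*})v_k$), so the published argument is doing extra work at exactly the point you streamline. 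As you remark, the only place a substantial theorem enters is the single cocycle-vanishing step for the outer $H$-action; the rest is crossed-product bookkeeping, which you carry out correctly.
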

\begin{proof}
$\rho$ and $\rho \otimes 1$ have the same characteristic invariants,
where $\rho \otimes 1$ acts on the factor $M \otimes \B(L^2(G))$.
Since $\tr(x \otimes 1) = \tr(x)$, the two actions have
the same inner invariant as well, hence are conjugate~\cite{JoT}.
By the previous lemma
they induce the same subfactor.

$\rho|_H \otimes 1$ is outer, and $1 \otimes Ad L(H)$ is a unitary
cocycle for this action (with no scalar twist). 
From~\cite{JoT}, every such cocycle is a unitary coboundary.
This means
there is some unitary
$w_H \in M \otimes \B(L^2(\tilde{G}))$
such that
$$w_H (\rho_h \otimes 1)(w_H^*) = 1 \otimes L(H)$$
for all $h \in H$, implying that
$$Ad w_H (\rho_h \otimes 1)Ad w_H^* =\rho_h \otimes Ad L(H)$$
Likewise, there is $w_K \in M \otimes \B(L^2(\tilde{G}))$ with
$$Ad w_K (\rho_k \otimes 1)Ad w_k^* =\rho_k \otimes Ad L(k)$$
for all $k \in K$.

Let $\chi$ be the action of $\rho \otimes 1$ on
$M \otimes \B(L^2(\tilde{G}))$, and $\xi$ the action of
$\rho \otimes Ad L$.  
Let the unitaries implementing
the action of $K$ in the respective crossed products
be $\{u_k\}$, $\{v_k\}$.

We define a map $\alpha$ from
$M \rtimes_{\chi} K$ to $M \rtimes_{\xi} K$ by
$\alpha(x)= w_H x w_H^*$ for $x \in M$,
and  
$\alpha(u_k)= w_H w_K^* v_k w_K w_H^*$,

$\alpha|_M$ is clearly an isomorphism.
For $x \in M$, $k \in K$,
we compute
$$\alpha(u_k) \alpha(x)\alpha(u_k^*) =
w_H w_K^* v_k w_K x w_K^* v_k^* w_K w_H^*=
w_H w_K^* \xi_k(w_K x w_K^*) w_K w_H^*$$
Since $Ad w_K \chi_k Ad w_K^* = \xi_k$,
we have $Ad w_K^* \xi_k Ad w_K = \chi_k$,
and this is $w_H\chi_k(x)w_H^* = \alpha(u_k x u_k^*)$.
It follows that
$\alpha$ is an homomorphism on all of $M \rtimes_{\chi} K$;
$\alpha$ is invertible by construction and is therefore
an isomorphism on this factor.

Now we compute $\alpha(M^H)$.  We write $M^{\chi|_H}$, $M^{\xi|_H}$
for the fixed point algebras of the actions of $H$
given by $\chi$, $\xi$ respectively.
$$\alpha(M^{\chi|_H}) = w_H M^{\chi|_H} w_H^*
=M^{Ad w_H \chi|_H Ad w_H^*}=M^{\xi|_H}$$

Therefore $\alpha$ provides an isomorphism between the Bisch-Haagerup
 subfactors
induced by $\chi$ and $\xi$.
\end{proof}

Recall that $\rho$ and $\sigma$ are stably conjugate if
$\rho \otimes Ad L$ is conjugate to $\sigma \otimes Ad L$.
In this case by the previous two lemmas
$\rho \otimes Ad L$ and $\sigma \otimes Ad L$ induce
the same subfactor, and so $\rho$ and $\sigma$ do as well.

\subsection{Outer Conjugacy Implies Subfactor Triplet Isomorphism}

Again let $\rho$ and $\sigma$ act on $M$ and $P$, with
properties as in the previous section.
Now let $\rho$ and $\sigma$ merely be outer conjugate.
That is, there is some isomorphism $\alpha$ from $P$ to $M$
such that for $g \in \tilde{G}$,
$\alpha \sigma_g \alpha^{-1}$ and $\rho_g$
have the same image
in $Out M$, but may differ by some nontrivial inner automorphism.
From~\cite{JoT},
we know that outer conjugacy implies that the actions of $\rho$
and $\sigma$ induce the same 3-cocycle in $H^3(G)$.

\begin{thm}
Let $\rho$ and $\sigma$ be actions of $H \ast K$ on hyperfinite
$II_1$ factors $M$ and
$P$ respectively, with outer restrictions to $H$ and $K$, and
$G = H \ast K / Int$ the same
for both actions.
Let $\tilde{G} = H \ast K / (\ker(\rho) \cap \ker(\sigma))$ be
a finite group.
Assume also that there are homomorphisms $p_H: G \rightarrow H$
and $p_K: G \rightarrow K$ with
$p_H(H)=H$, $p_H(K)=1$, $p_K(K)=K$, $p_K(H)=1$.
Then if $\rho$ and $\sigma$ are outer conjugate.
the induced Bisch-Haagerup subfactors are isomorphic.
\end{thm}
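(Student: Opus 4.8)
The plan is to reduce outer conjugacy to stable conjugacy by using the projections $p_H$ and $p_K$ to kill the characteristic-invariant discrepancy, and then invoke the results of Section 4.4. Since $\rho$ and $\sigma$ are outer conjugate, after composing $\sigma$ with the implementing isomorphism $\alpha: P \to M$ we may assume both act on the same factor $M$ and that $\rho_g$ and $\sigma_g$ agree in $Out M$ for every $g \in \tilde G$. Thus $\rho_g = Ad(w_g) \circ \sigma_g$ for some unitaries $w_g \in M$, and by associativity the $w_g$ form a twisted unitary cocycle with some scalar $2$-cocycle $\epsilon$ on $\tilde G$; moreover $\epsilon$ must be trivial on the inner subgroup $S$ (since there both $\rho$ and $\sigma$ are implemented by genuine unitaries), so $\epsilon$ descends to $H^2$-data whose image in $H^3(G)$ is the difference of the two $3$-cocycle obstructions — which vanishes by~\cite{JoT}, since $\rho$ and $\sigma$ are outer conjugate. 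Hence $\epsilon$ itself comes from a coboundary on $\tilde G$ modulo the part pulled back from $G$.

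The key step is to exploit the homomorphisms $p_H, p_K$. First I would note that because $\rho|_H$ and $\rho|_K$ are outer, any scalar $2$-cocycle on $\tilde G$ that is pulled back from $G = HKN$ can be split as a product of one factor pulled back along $p_H$ and one along $p_K$: concretely, using $p_H$ and $p_K$ we can write the standard-form representatives of elements of $N$ so that the cocycle is supported ``off the diagonal,'' and absorb each piece into a redefinition of the implementing unitaries $w_h$ for $h \in H$ and $w_k$ for $k \in K$ separately. This uses that $\rho|_H$ (resp. $\rho|_K$), being an outer action of a finite group, has the property from~\cite{JoT} that every unitary cocycle — even a $\epsilon$-twisted one with $\epsilon$ restricted from the appropriate cyclic/abelian data — is a coboundary after tensoring with $Ad L$. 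Concretely, tensor both actions with $Ad L$ for $L$ the left regular representation of $\tilde G$; as in Lemma 4.5 this does not change the induced subfactor, and on $M \otimes \mathcal B(L^2(\tilde G))$ the twisted cocycle $\{w_g \otimes 1\}$ becomes an honest unitary coboundary, because in the stabilized factor the relevant obstruction classes all vanish. After untwisting, $\rho \otimes Ad L$ and $\sigma \otimes Ad L$ have the same characteristic invariant and (since $\tr(x \otimes 1) = \tr(x)$) the same inner invariant, hence are conjugate by~\cite{JoT}.

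Once $\rho \otimes Ad L$ and $\sigma \otimes Ad L$ are conjugate, Lemma 4.4 gives subfactor isomorphism of the corresponding Bisch-Haagerup subfactors, and Lemma 4.5 says stabilizing by $Ad L$ does not change the subfactor, so $\rho$ and $\sigma$ induce isomorphic subfactors; the conjugating isomorphism can be tracked through $\alpha$, $w_H$, $w_K$ to respect the intermediate subfactor $P$ as well, giving the triplet isomorphism $N \subset P \subset M$ promised in the section title. The main obstacle I expect is the second step: showing that the scalar $2$-cocycle $\epsilon$ can be factored and absorbed separately into the $H$-unitaries and $K$-unitaries. This is exactly where the hypothesis on $p_H, p_K$ is essential — without it the twist on $N$ need not decompose, and the cocycle could fail to be a coboundary on $\tilde G$ even though its image in $H^3(G)$ vanishes. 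I would handle it by writing every element of $\tilde G$ uniquely as $hks$ with $h \in H$, $k \in K$, $s \in S$, using $p_H$ and $p_K$ to define $h$ and $k$ unambiguously, and then checking the cocycle identity component-by-component to see that $\epsilon$ is cohomologous to a product of an $H$-pullback and a $K$-pullback, each of which is killed by the corresponding coboundary coming from outerness of $\rho|_H$ and $\rho|_K$.
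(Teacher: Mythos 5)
Your overall direction matches the paper — stabilize by $Ad L$, use the exact sequence of \cite{JoT} to represent the discrepancy as a scalar $2$-cocycle $\epsilon \in H^2(\tilde G)$, then exploit $p_H$, $p_K$ and outerness of $\rho|_H$, $\rho|_K$ — but the central step of your argument does not work as described, and the fix is the main content of the paper's proof.

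You claim that $\epsilon$ is ``cohomologous to a product of an $H$-pullback and a $K$-pullback, each of which is killed by the corresponding coboundary,'' and conclude that $\rho \otimes Ad L$ and $\sigma \otimes Ad L$ end up with \emph{the same} characteristic invariant, hence conjugate. Neither claim is right in general. Tensoring by $Ad L$ does not change characteristic invariants, and $\rho$ and $\sigma$ have \emph{different} ones precisely when the discrepancy $\epsilon$ has nonzero image in $\Lambda_{\tilde G, S}$. There is no reason $\epsilon$ itself should be cohomologous to $infl_H(\epsilon|_H) + infl_K(\epsilon|_K)$; if it were, the two actions would be stably conjugate outright and the hypotheses on $p_H, p_K$ would be superfluous. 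The paper instead forms the \emph{new} cocycle $\xi = \epsilon - infl_H(\epsilon|_H) - infl_K(\epsilon|_K)$, observes that $\xi$ has the same image in $\Lambda_{\tilde G, S}$ as $\epsilon$ (the inflations of elements of $H^2(G)$ are killed) yet restricts to $0$ on both $H$ and $K$, and then builds an explicit $\xi$-twisted unitary cocycle $u$ for $\rho \otimes Ad L$ out of the equivariant projections on $L^2(\tilde G)$. The perturbed action $Ad\, u \circ (\rho \otimes Ad L)$ is \emph{not} conjugate to $\rho \otimes Ad L$ as a $\tilde G$-action; the point is subtler: it is stably conjugate to $\sigma$ (same characteristic invariant), and \emph{separately} it induces the same subfactor as $\rho \otimes Ad L$ because $u|_H$ and $u|_K$ are honest (untwisted) cocycles, hence each is a coboundary $w_H$, $w_K$, and these feed into the two-sided conjugation trick of Lemma~4.2 — $\alpha(x) = w_H x w_H^*$ on $M$ and $\alpha(u_k) = w_H w_K^* v_k w_K w_H^*$ on the crossed product unitaries. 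You are trying to achieve conjugacy of the actions where only ``induces the same subfactor'' is available, and your factorization of $\epsilon$ would require $\xi$ to be a coboundary, which it need not be.

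So the gap is genuine: you need to abandon the goal of making $\rho \otimes Ad L$ and $\sigma \otimes Ad L$ conjugate, replace $\epsilon$ by $\xi$ (same characteristic invariant, trivial on $H \cup K$), and then run \emph{two independent} subfactor comparisons — one via stable conjugacy to $\sigma$, one via Lemma~4.2 applied to the cocycle $u$ — rather than a single chain of conjugacies.
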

\begin{proof}
 From the
exact sequence of~\cite{JoT}, outer conjugacy implies that
the characteristic invariants of the two actions
differ by the image in $\Lambda_{\tilde{G},S}$ of a 2-cocycle
$\epsilon \in H^2(\tilde{G})$.  In additive notation, $\epsilon$ induces
the characteristic invariant
$(\lambda,\mu)_{\sigma}-(\lambda,mu)_{\rho}$.  We will throughout this
theorem use
additive notation for cohomology groups and characteristic invariants.

We now discuss our assumption on the group.
Let $N_1$ be the first commutator subgroup of $H \ast K$.
The quotient $H \ast K / N$ is $H \oplus K$; the quotient map
sends $H$ to $(H,1)$ and $K$ to $(1,K)$.
Our assumption about the maps $p_H$, $p_K$ is then true if
this map factors through $G$. 
This is equivalent to requiring all words $w \in H \ast K$ with $\rho_w$ inner
(and $\sigma_w$ likewise inner)
to be contained in $N$.

A homomorphism of groups induces a homomorphism of cohomology groups
in the opposite direction.
Therefore we may use $p_H$ and $p_K$ to construct maps
from $H^2(H)$ and $H^2(K)$ into $H^2(G)$. 
We may compose
these maps with the inflation from $H^2(G)$ to $H^2(\tilde{G})$
(obtained from the quotient map $\tilde{G} \rightarrow \tilde{G}/S=G$)
to obtain maps $infl_H: H^2(H) \rightarrow H^2(\tilde{G})$ and
$infl_K: H^2(K) \rightarrow H^2(\tilde{G})$.
The inflation to $H^2(\tilde{G})$ of any 2-cocycle in
$H^2(G)$ has trivial characteristic invariant in
$\Lambda_{\tilde{G},S}$~\cite{JoT},
so the images of $infl_H$ and $infl_K$ do as well.

$p_H$ restricted to $H$ is the identity map, and the same is true
of the quotient map $\tilde{G} \rightarrow G$.
It follows that for $\xi \in H^2(H)$, we have
$infl_H(\xi)|_H = \xi$, and likewise
$infl_K(\xi)|_K = \xi$ for $\xi \in H^2(K)$.

Now we define 2-cocycles on $H$ and $K$ by restricting $\epsilon$
to these subgroups.
Note that since $p_H(K)=\{1\}$,
$infl_H(\epsilon|_H)|_K = infl_H(\epsilon_H)(1,1) = 0$.  The same
argument applies to $infl_K(\epsilon|_K)$.  Therefore
we may define a new cocycle $\xi = \epsilon -
infl_H(\epsilon|_H)-infl_K(\epsilon|_K)$, whose
restriction to $H$ and $K$ is zero.  $\xi$ induces the same
characteristic invariant as $\epsilon$, since it differs
from $\epsilon$ by an inflation of some element
in $H^2(G)$.

We extend our representation to $M \otimes \B(L^2(\tilde{G}))$.
Let $L$ be the action of $\tilde{G}$ on $\B(L^2(\tilde{G}))$
given by $Ad u_g$, where $u_g$ is the left regular representation.
From the previous section, $\rho$ and $\rho \otimes Ad L$ induce
the same subfactor.

We then have a natural system of equivariant projections for $\tilde{G}$,
namely
$e_x \in 1 \otimes \B(L^2(\tilde{G}))$.
These projections partition unity, are fixed
by $\rho \otimes 1$, and have $(\rho_x \otimes Ad L(x))(e_y)=e_{xy}$.

Now we will find an inner perturbation of $\rho \otimes Ad L$
which is stably conjugate to $\sigma \otimes Ad L$.
Define $u$  by
$u_x = \sum_{z \in \tilde{G}} e_z \xi(z^{-1},x)$.  Then we have
$$(\rho_x \otimes Ad L(x))(u_y) =
\sum_{z \in \tilde{G}} e_{xz} \xi(z^{-1},y) =
\sum_{w \in \tilde{G}} e_w \xi(w^{-1}x,y)$$
where $w = xz$.  This implies that
$$u_x (\rho_x \otimes Ad L(x))(u_y) =
\sum_{w \in \tilde{G}} e_w \xi(w^{-1},x) \xi(w^{-1}x,y)$$
From the cocycle relations for $\xi$, this is
$$\sum_{w \in \tilde{G}} e_w \xi(x,y) \xi(w^{-1},xy)$$ which is equal
to $\xi(x,y)u_{xy}$.  This means that the $u_g$'s form a twisted
unitary cocycle for $\rho \otimes Ad L$, with associated
scalar 2-cocycle $\xi$.

$\xi$ induces the characteristic invariant
$(\lambda,\mu)_{\sigma}-(\lambda,\mu)_{\rho}$.
Therefore $Ad u \circ (\rho \otimes Ad L)$ has the same characteristic
invariant as $\sigma$, and is stably conjugate to $\sigma$.
So from the previous section
$\sigma$ induces the same subfactor as $Ad u \circ (\rho \otimes Ad L)$.

Since $\xi|_{H}=0$ and $\xi_{K}=0$, the restrictions
$u|_{H}$ and $u|_{K}$
are unitary cocycles without twist for $\rho \otimes Ad L$.
$H$ and $K$ have outer actions, so from~\cite{JoT}
there exist $w_H$ and $w_K$
such that for $h$ in $H$, $k$ in $K$ we have
$$u_h = w_H (\rho_h \otimes Ad L(h))(w_H^*)$$ 
and likewise
$$u_k= w_K (\rho_k \otimes Ad L(k))(w_K^*)$$

Using the same argument as in the proof of lemma 4.2,
this implies that $Ad u \circ (\rho \otimes Ad L)$
and $\rho \otimes Ad L$ induce isomorphic subfactors.
From the previous section,
$Ad u \circ (\rho \otimes Ad L)$ and $\rho$ do so as well.

We conclude that $\rho$ and $\sigma$
induce isomorphic subfactors.
\end{proof}

If $\ker(\rho) \cap \ker(\sigma)$ is infinite index in $H \ast K$,
then $\tilde{G}$ is infinite and the above theorem cannot be applied.
In such cases it may be useful to perturb $\rho$ or $\sigma$
by appropriate unitary cocycles.
This will not change the outer conjugacy classes of the representations
or the isomorphism classes of the subfactors, but it can affect
$\ker(\rho)$ and $\ker(\sigma)$, and possibly reduce
$\tilde{G}$ to a more manageable size.

Note that automorphisms of the group algebras $\mathbb{C}[H]$
and $\mathbb{C}[K]$ cannot affect
the subfactor $M^H \subset M \rtimes K$.
In the case of abelian groups, this is just
automorphisms of $H$ and $K$ separately.  In general, however,
all we can say is that outer conjugacy up to separate automorphisms
of the group algebras is also sufficient for subfactor isomorphism.

The original map $\alpha$ which induced the outer conjugacy
had the property that $\alpha(P)=M$.  This property is preserved
at every step of the above argument.  In other words,
outer conjugacy implies not only $M^H \subset M \rtimes K \cong
P^H \subset P \rtimes K$, but
$M^H \subset M \subset M \rtimes K \cong P^H \subset P \subset P \rtimes K$.
We call this isomorphism of towers a triplet isomorphism.

\subsection{Subfactor Triplet Isomorphism Implies Outer Conjugacy}

We will now provide the converse (approximately)
 of the proceeding theorem.
As before, let $H$ and $K$ be finite groups,
$\rho$ and $\sigma$ actions of $H \ast K$ 
on $M$, $P$ respectively with outer restrictions to $H$ and $K$.

\begin{thm}
Let $M^H \subset M \rtimes K$ be isomorphic to
$P^H \subset P \rtimes K$ via $\alpha:P \rtimes K \rightarrow M \rtimes K$,
i.e. $\alpha(P) = M$, $\alpha(P^H) = M^H$.
Then $\rho$ and $\sigma$ are outer conjugate.
\end{thm}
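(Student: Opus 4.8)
The plan is to reverse-engineer the construction of the previous theorem. Given the triplet isomorphism $\alpha \colon P \rtimes K \to M \rtimes K$ with $\alpha(P) = M$ and $\alpha(P^H) = M^H$, I would first use $\alpha$ to transport the action $\sigma$ over to $M$, so that without loss of generality we may assume $P = M$, $\sigma_g$ and $\rho_g$ both act on the same factor $M$, and the identity map is a triplet isomorphism between $M^H \subset M \subset M \rtimes K$ (built from $\rho$) and $M^H \subset M \subset M \rtimes K$ (built from $\sigma$). The goal is then to show $\rho$ and $\sigma$ are outer conjugate as representations of $H \ast K$, i.e. that there is a single $\theta \in \operatorname{Aut} M$ conjugating $\sigma_g$ to $\rho_g$ modulo $\operatorname{Int} M$ for all $g$.

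The key steps: First, recover the action of $K$. The crossed product $M \rtimes_\rho K$ comes with its canonical unitaries $\{u_k\}$ implementing $\rho|_K$, and similarly $M \rtimes_\sigma K$ has $\{v_k\}$. Since $\alpha$ (now the identity) carries one crossed product structure to the other with $\alpha(M) = M$, the images $\alpha(v_k)$ are unitaries in $M \rtimes_\rho K$ normalizing $M$ and inducing $\sigma_k$ on $M$; comparing Fourier decompositions in $M \rtimes_\rho K = \bigoplus_k M u_k$ shows $\alpha(v_k) = c_k u_{\pi(k)}$ for some automorphism $\pi$ of $K$ and a $\mathbb{C}[K]$-valued cochain, whence $\sigma_k = \operatorname{Ad}(c_k) \circ \rho_{\pi(k)}$. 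After composing with the group automorphism $\pi$ (permitted, since automorphisms of $\mathbb{C}[K]$ do not change the subfactor, as noted in the remark following Theorem 4.3) we get $\sigma_k$ inner-equivalent to $\rho_k$ for all $k \in K$. Second, recover the action of $H$: here we use $\alpha(M^H_\sigma) = M^H_\rho$, i.e. the fixed-point algebra of $\sigma|_H$ equals that of $\rho|_H$ inside $M$. Since for an outer action of a finite group the fixed-point subalgebra is an irreducible subfactor of index $|H|$ whose basic construction recovers $M \rtimes H$, and by the Galois-type correspondence the action is determined up to outer conjugacy by the inclusion $M^H \subset M$ together with its dual data, I would argue that $M^H_\rho = M^H_\sigma$ forces $\rho|_H$ and $\sigma|_H$ to be outer conjugate via some $\theta_H \in \operatorname{Aut} M$ — again up to an automorphism of $H$.

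Third, and this is where the real work lies, I must produce a \emph{single} automorphism $\theta$ that simultaneously conjugates $\sigma|_H$ into $\rho|_H$ and $\sigma|_K$ into $\rho|_K$ modulo inner, i.e. an outer conjugacy of the whole representation of $H \ast K$, not merely of the two pieces separately. The point is that the triplet isomorphism is one map $\alpha$, so the conjugating automorphisms from the $H$-side and the $K$-side analyses must be reconciled into one; I would do this by working throughout with the \emph{same} $\theta$ induced by $\alpha$ on $M$ and tracking that $\theta \sigma_h \theta^{-1} \in \operatorname{Int} M \cdot \rho_h$ and $\theta \sigma_k \theta^{-1} \in \operatorname{Int} M \cdot \rho_k$ fall out of the two structural decompositions above applied to the \emph{one} isomorphism $\alpha$, rather than proving the two statements independently and then trying to glue. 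Since outer conjugacy of representations of $H \ast K$ is exactly generated (freely) by the images of $H$ and $K$, controlling the generators controls the whole free product. The main obstacle will be precisely this coherence: ensuring the group automorphisms of $H$ and of $K$ that inevitably appear, and the unitary cochains, are compatible with a single ambient automorphism — which is why the statement is phrased as outer conjugacy "up to separate automorphisms of the group algebras," matching the caveat already flagged after Theorem 4.3. Once the generators are handled, invoking Jones's classification \cite{JoT} (outer conjugacy is detected by the $3$-cocycle, and the characteristic invariant machinery guarantees the conjugating automorphism can be chosen globally) closes the argument.
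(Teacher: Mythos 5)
Your overall framework — transport by $\alpha$ to reduce to $P=M$ with $\alpha|_M=\mathrm{id}$, then analyze $H$ and $K$ separately — matches the paper's, and your $K$-step via Fourier decomposition of $\alpha(v_k)$ inside $M\rtimes_\rho K=\bigoplus_k Mu_k$ is a correct (and slightly more concrete) variant of the paper's argument, which instead observes that both $M^{\rho|_K}\subset M\subset M\rtimes K$ and $M^{\tilde\sigma|_K}\subset M\subset M\rtimes K$ are basic-construction triples with the same top two floors, forcing the bottom floors to differ by an inner.

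The gap is in your $H$-step, and your "Third" step is a symptom of it rather than a fix. Invoking a ``Galois-type correspondence'' to conclude that $\rho|_H$ and $\sigma|_H$ are ``outer conjugate via some $\theta_H\in\operatorname{Aut}M$'' only uses that the fixed-point algebras are \emph{isomorphic} subfactors; it throws away the crucial fact that after your reduction they are \emph{literally equal} as subalgebras of $M$. That equality is exactly what kills the reconciliation problem, and it does so not by appealing to Jones's classification (which lets you conjugate by some automorphism, not the identity) but by a direct relative-commutant argument: since $\tilde\sigma_h$ fixes $M^H$ pointwise, its standard implementing unitary on $L^2(M)$ lies in $(M^H)'\cap(M\rtimes H)=\mathbb{C}[H]$; combined with the fact that it must normalize $M$ (so equals $mu_{h'}$ for some $m\in\mathcal{U}(M)$, $h'\in H$), it is forced to be a scalar multiple of some $u_{h'}$, giving $\tilde\sigma_h=\rho_{\pi(h)}$ for an automorphism $\pi$ of $H$ with no inner or conjugating correction at all. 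Once both the $H$- and $K$-steps are sharpened to give inner equivalence with respect to the \emph{single} reduction automorphism $\alpha|_P$ (equivalently, the identity after the reduction), coherence is automatic on the free product, and no 3-cocycle machinery is needed.
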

\begin{proof}
Take $\tilde{\sigma}=\alpha \sigma \alpha^{-1}$.  This gives
actions of $H$ and $K$ on $\alpha(P) = M$.
Since $\alpha(P^H)=M^H$, 
$\tilde{\sigma}|_H$ commutes with left and right multiplication
by $M^H$.  Any such linear operator on $\B(L^2(M))$ is
contained in the relative commutant
$ (M^H)' \cap M \rtimes H$; this is equal to
$\mathbb{C}[H]$, where the action of $H$ is implemented by $\rho$.
So $\tilde{\sigma}$
and $\rho$ give the same $H$-action, up to group algebra
automorphism of $H$.

Let $N$ be the fixed-point algebra of $M$ under the action
of $\tilde{\sigma}|_K$.  
Since $\alpha(P)=M$ and $\alpha(P \rtimes K)=M \rtimes K$,
we have $N \subset M \subset M \rtimes K$ isomorphic to the basic
construction on $N \subset M$.
Therefore $N$ differs from $M^H$ by an inner automorphism,
i.e. $N=uM^Ku^*$ for some unitary $u \in M$.  It follows as above that
$\tilde{\sigma}_k=u\rho_ku^*$ for $k \in K$, up to group algebra
automorphism of $K$.

We conclude that up to inner perturbation,
$\tilde{\sigma}$ and $\rho$ agree on $H$ and $K$,
and hence on the free product.
So triplet automorphism of the corresponding subfactors
implies that $\rho$ and $\sigma$
are outer conjugate, up to separate automorphisms of the individual
group algebras.
\end{proof}

\subsection{Exceptions}

We will now discuss our choice of assumptions.

Let $H$ be a finite abelian group acting on a $II_1$ factor, not cyclic,
with
action $\rho$ on the hyperfinite $II_1$ factor $M$.
$\rho_h$ is implemented in
the crossed product $M \rtimes_{\rho} H$ by $\{u_h\}$.
Let
$\{w_h|h \in H\}$ be a
twisted unitary cocycle for $\rho$ with nontrivial scalar
2-cohomology.  Let $\sigma_h= Ad w_h \circ \rho_h$ for $h \in H$,
implemented in the crossed product $M \rtimes_{\sigma} H$ by $\{v_h\}$.

Now we consider the subfactors
$M^H \subset M \rtimes_{\rho} H$ and
$M^H \subset M \rtimes_{\sigma} H$, where the fixed
point algebra is taken relative to the action
of $\rho$ in both cases.  Taking $H=K$,
these are both Bisch-Haagerup subfactors, and the actions are
clearly outer conjugate.

For the first subfactor,
$(M^H)' \cap M \rtimes_{\rho} H$ is spanned by elements $u_h$.
Since $H$ is abelian, this relative commutant is abelian as well.

For the second subfactor,
since $Ad w_h^* v_h = Ad w_h^* \circ \sigma_h =
\rho_h$,
the relative commutant $(M^H)' \cap M \rtimes_{\sigma} H$
is spanned by
elements of the form $w_h^* v_h$, $h \in H$.
Since the twisted cocycle $\{w_h\}$ has nontrivial scalar
2-cohomology, these elements do not commute with
each other, instead obeying various relations of the form
$xy=\lambda yx$, $\lambda \neq 1 \in \mathbb{C}$, $x$, $y$ in
the relative commutant.  So the standard invariant
of this subfactor is not the same as that of the first one,
and the two subfactors are not isomorphic.

This example is extremely degenerate, but it shows that some
additional assumption is needed for outer conjugacy to imply
subfactor isomorphism.  The result of ~\cite{BDG2} implies
that the assumption used in theorem 4.1, of
the quotient $H\ast K \rightarrow H \ast K / N = H \oplus K$
factoring through $G = H \ast K / Int$, is stronger than necessary.
However, this assumption is always valid
in the twisted tensor product examples described above.

With the above assumption on the action, outer conjugacy is equivalent to triplet
subfactor isomorphism.  This is sometimes strictly stronger than subfactor
isomorphism;
in general, an isomorphism $\alpha$
from $P^H \subset P \rtimes K$ to $M^H \subset M \rtimes K$
need not send $P$ to $M$.
It may be the case that an automorphism
of $M \rtimes K$ exists which leaves $M^H$ invariant and
sends $\alpha(P)$ to $M$,
but it is not clear that such an automorphism should always exists.

\subsection{Isomorphisms of Bisch-Haagerup Subfactors}

In~\cite{BNP}, the authors consider subfactors $M^H \subset M \rtimes K$
where $H$ is abelian and $K$ is prime-order cyclic.  They show that
the normalizer of $M^H$ in $M \rtimes K$ is equal to $M$ for
any such subfactors.
Since normalizers are preserved by isomorphism, this means that
subfactor isomorphism implies triplet isomorphism in all such cases.

We may always detect intermediate subfactors of $M^H \subset
M \rtimes K$ by examining all the projections in the first
relative commutant~\cite{Bi}.
Using~\cite{BH} to compute this commutant shows that the condition
$hkh^{-1}k^{-1} \not \in Int $
for $h\neq 1 \in H$, $k \neq 1 \in K$ is sufficient to rule out additional
intermediate subfactors of the correct index,
when combined with the condition in theorem 4.1
on the quotient of $H\ast K$ by $N_1$.  This again means that
subfactor isomorphism implies triplet isomorphism.

Summarizing the results of this section,
we consider actions of $H \ast K$ on a $II_1$ factor, with outer
restrictions to $H$ and $K$, such that
the inner subgroup of this free product is contained in the first
commutator subgroup of $H \ast K$.  For such actions, outer
conjugacy of the action is equivalent to triplet subfactor
isomorphism $M^H \subset M \subset M \rtimes K \cong
P^H \subset P \subset P \rtimes K$.  Futher assuming that
$M$ is the only intermediate subfactor of its index
in $M^H \subset M \rtimes K$, outer conjugacy of the action
is equivalent to subfactor isomorphism.  $H$ abelian
and $K$ prime-order cyclic gives the same result.

\section{Applications to Hadamard Subfactors}

\subsection{Introduction}

We now take the outer actions of $H$ and $K$ on the hyperfinite
$II_1$ factor $P$ to come from the twisted tensor
product of two group Hadamard matrices, as described in section
3.5.

In this case, the condition in theorem 4.1 on the action
of $H \ast K$ is always true.  From section 3.6,
the inner subgroup of $H \ast K$
is contained in the first commutator subgroup $N_1$.
Therefore the quotient map $H \ast K \rightarrow H \ast K / N_1 =
H \oplus K$ factors through $H \ast K / Int$, and may be defined on
$H \ast K / Int = G$.  This quotient map $(p_H, p_K)$ has the
properties in the assumption of the theorem.
It follows that outer conjugacy of actions implies subfactor
isomorphism in the Hadamard case.

Furthermore, $H$ and $K$ must be
abelian, so the result of~\cite{BNP} will frequently apply;
if $H$ or $K$ is cyclic, then subfactor isomorphism
implies triplet isomorphism, and is therefore
equivalent to outer conjugacy.

Let $G=H \ast K / Int P$ be finite. Let $\tilde{G} \subset Aut P$ be
a finite group with
$\tilde{G} / Int P = G$ and $\tilde{G} \cap Int P = S$.
From~\cite{JoT}, such a finite $\tilde{G}$ always exists.
We will take $\tilde{G}$ to act on $P$ via the representation $\rho$.
The inner subgroup $S$ is in general nontrivial.
As above, we know that $S$
must be contained in the first commutator subgroup $\tilde{N}$
of $\tilde{G}$.  Since $\tilde{G}$ has a compatible
action, from theorem 2.3
each element of $S$ may be implemented by
some $u_s \in B_0 = \Delta_m = l^{\infty}(H)$,
where the tower of $B_i$'s is as in section 3.

We now compute the characteristic invariant of $\tilde{G}$.
$B_0=l^{\infty}(H)$, so we may consider $u_s$ to be a vector
with components labeled by elements of $H$.
Since the $u_s$'s are
only determined up to scalars, we may require
$(u_s)_1=1$.  It follows that if
$u_s u_s'=\mu u_{ss'}$ for some scalar $\mu$, then $\mu=1$.
Therefore $\mu$ is trivial for these actions.

Each element $g$ of $\tilde{G}$ may be written as
$g = hkn$, $h \in H$, $k \in K$, $n \in \tilde{N} = N_1 / \ker(\rho)$.
Since for the Hadamard action
$K$ and $\tilde{N}$ act trivially on $B_0$, $\rho_{kn}(u_s)=u_s$
for $k$, $n$ as above and $s \in S$.
This means we have
$\lambda(kn,s)=1$ and
$\lambda(hkn,s)=\lambda(h,s)$ for $k$, $n$, $s$ as above
and $h \in H$.  Therefore the characteristic
invariant is determined by $\lambda|_{H \times S}$.

From the definition of $\lambda$ 
we have $\rho_h(u_{h^{-1}sh})=\lambda(h,s)u_s$.  We may
determine this scalar by examining the first component.
Since $(\lambda(h,s)u_s)_1=1$ from our choice of $u_s$,
and $\rho_h$ acts via the left regular representation
on the minimal projections of $B_0$,
we may compute $\lambda(h,s)=\overline{\rho_h(u_{h^{-1}sh})_1}
=\overline{(u_{h^{-1}sh})_{h^{-1}} }$.  So
the coordinates of the $u_s$'s determine the characteristic
invariant, and vice versa.

It follows that the existence of any nontrivial
inner subgroup implies a nonzero characteristic invariant,
although the induced element of $H^3(G)$ may sometimes
still be a coboundary.  In addition, if
$\rho$ and $\sigma$ give actions of the same group
$\tilde{G}$ on $P$, with the same inner subgroup $S \subset \tilde{G}$,
and the inner elements $\{u_s\}$ have the same coordinates, then their
characteristic invariants are the same.  In such a case
the 3-cocycle obstructions are the same, the actions
are outer conjugate, and the subfactors are isomorphic.

Applying~\cite{BH} to find the principal graph
for these Hadamard group actions is in some ways
easier than in the general case.  The local freeness
condition of~\cite{BH}
($hgk=x$ for $h \in H, k \in K, g \in G$ only if $h=k=1$)
will always apply, so the odd vertices of the principal
graph correspond to the $H-K$ double cosets $\{HnK|n \in N\}$.
Even vertices in the principal graph correspond to $H-H$ double
cosets $HgH$.  We find the edges of the graph by
decomposing $HnKH$ into such double cosets.  From the above
description of $G$, if $k \neq k'$ then
$HnkH$ and $Hnk'H$ are disjoint, so there is always
one such double coset for each element of $K$.
Finding the number of single $H$-cosets in
each $HnkH$ (taking advantage of the relatively simple
multiplication table of $G$)
then allows us to complete the principal graph.
The dual principal graph is computed similarly.

\subsection{Fourier-4}

Let $H=K=\Z_2$.  In this case $H_1$ and $H_2$ are
both $\frac{1}{\sqrt{2}}\left(\begin{array}{cc}1&1\\1&-1\end{array}\right)$,
and their tensor product is the unique real 4 by 4 Hadamard
matrix
$$\frac{1}{2}\left(\begin{array}{cccc}
1&1&1&1\\
1&-1&1&-1\\
1&1&-1&-1\\
1&-1&-1&1\end{array}\right)$$
We may write the twist
as $(\alpha, \beta, \gamma, \delta) \in
l^{\infty}(\Z_2 \oplus \Z_2)$.  Putting the twist into standard
form sends all the parameters to 1 except $\delta$.
Applying a twist of $T=(1,1,1,\delta)$ gives the twisted
tensor product 
$$H=\frac{1}{2}\left(\begin{array}{cccc}
1&1&1&1\\
1&-1&1&-1\\
1&\delta&-1&-\delta\\
1&-\delta&-1&\delta\end{array}\right)$$

All size-4 complex Hadamard matrices are contained in a single
1-parameter family~\cite{Ho}.
As $\delta$ takes on values in the torus,
$H$ varies over this entire family.  In other words,
all 4 by 4 Hadamard matrices are twisted tensor products.
                        
We know that the group $H \ast K / Int$ is equal to $HKN$ from
section 3.6.  $H$ and $K$ are each generated by a single order-2
element, respectively $h$ and $k$.

$N$ has a single generator $n = hkhk$.
From section 3.6, this compatible
automorphism is induced by $Ad b(n)$, where 
$$b(n) = \tilde{T} \rho_h(\tilde{T}^*) \rho_k(\tilde{T}^*) \rho_{hk}(\tilde{T})
\in B_0' \cap N_1$$
For convenience we write elements of $N \subset B_0' \cap B_1$ in
array form, since this abelian algebra has one minimal projection
for each pair $(h,k)$, $h \in H$, $k \in K$.  These are not
matrices--multiplication is still pointwise.
We will consider
rows to be labeled by elements of $H$, columns by elements of $K$.
So $$b(n) =
\left[\begin{array}{cc}\delta&\overline{\delta}\\
\overline{\delta}&\delta\end{array}\right]
$$

Multiplying columns by scalars has no affect on the induced compatible
automorphism,
since $Z(B_1)=l^{\infty}(K)$ has trivial adjoint action on $B_1$.
So the action of $n$ is also induced by
$$\left[\begin{array}{cc}1&1\\
\overline{\delta^2}&\delta^2\end{array}\right]
$$
Taking the $l$th power gives $n^l$ induced by
$$\left[\begin{array}{cc}1&1\\
\overline{\delta^{2l}}&\delta^{2l}\end{array}\right]
$$

This element of $B_0' \cap B_1$ will induce an inner automorphism
when it is constant along each row.  This occurs when
$\overline{\delta^{2l}} = \delta^{2l}$, i.e. when $\delta^{2l}=\pm 1$.

Suppose that $\delta$ is a rational rotation.
Then let $l$ be the smallest natural number such that
$\delta^{4l}=1$.  This means that $\delta^{2l} = \pm 1$,
and $N = \Z_l$, 
generated by $n=hkhk$.  $hnh=knk=n^{-1}$,
implying that $G = H \ast K / Int = HKN / Int$ is the dihedral
group $D_{2l}$.
Generators are $s=h$ and $t=hk$, with $t^{2k}=s^2=stst=1$.

If $\delta$ is an irrational rotation, this group is $D_{\infty}$.

Applying the methods of~\cite{BH} then gives principal graphs for
all 4 by 4 Hadamard matrices, respectively $D_{2l+1}^{(1)}$ and $D_{\infty}$.

The above classification cannot be complete, as may be seen by
examining the case $\delta=i$.  It may be shown that
the resulting matrix is Hadamard equivalent to $H_{\Z_4}$,
while if $\delta=1$ then the matrix is $H_{\Z_2 \oplus \Z_2}$.
These subfactors have the same principal graph,
but they are not isomorphic.

As described above, we may classify the index-4 case
up to subfactor isomorphism
by classifying the action of the free product
up to outer conjugacy, since the necessary conditions
are satisfied.

Now we consider cohomology.  If $\delta$ is rational,
we take as before
$\delta^{4l}=1$, fixing the principal graph.
We now consider the two cases $\delta^{2l}=1$
and $\delta^{2l}=-1$.

If $\delta^{2l}=1$, then $(hkhk)^n$ is the identity, and we have
a true outer action of $H \ast K = D_{2l}$.  Any two outer actions
of a finite group are conjugate~\cite{JoT},
so for any choice of $\delta$ with $\delta^{2l}=1$, the corresponding
Hadamard subfactors are isomorphic.

Now suppose $\delta^{2l}=-1$.  In this case $(hkhk)^n$ is a nontrivial
inner $u = (1, -1) \in B_0=\Delta_m \otimes 1$, with $u^2 =1$.  This allows
us to extend the representation of $H \ast K=D_{2l}$ in $Out P$ to an action
of $D_{4l}$ on $P$ with inner subgroup $\{1,u\}=\Z_2$.
$u$ does not depend on the particular choice of root, so the characteristic
invariant and associated 3-cocycle of $G=D_{2n}$ are the same
for any such $\delta$.  It follows that all such subfactors are again
isomorphic.

This means that there are at most 2 nonisomorphic Hadamard subfactors
with each graph $D_{2l+1}^{(1)}$ at index 4.   In~\cite{IK} the authors
find that there are $n-2$ subfactors with principal graph
$D_n^{(1)}$ for any $n$.
 This means
that many of these subfactors cannot be constructed from Hadamard
commuting squares.

It remains to show that the case $\delta^{2l} = \pm 1 $ give distinct
subfactors.  For $l=1$ the subfactors are depth 2,
and may be identified by direct computation of the
intermediate subfactor lattice (using~\cite{JoP}) as
$N \subset N \rtimes \Z_2^2$ and
$N \subset N \rtimes \Z_4$ for $\delta^{2l}= 1, -1$ respectively.

To prove that the cases $\delta^{2l}= \pm 1$
are distinct for larger $l$, we must show that the associated
3-cocycles are different.

We consider the cyclic subgroup of $D_{4l}$ generated by $a=hk$ in the
case $\delta^{2l}=-1$.
$a$ has order $2l$ in $Out P$, with $a^l=Ad u$.  Now, $\lambda(a,a^l)=-1$,
since $hk(u)=-u$.  This is a nontrivial characteristic invariant,
but cyclic groups have trivial 2-cohomology, so it does not
come from a 2-cocycle on the subgroup.  Therefore from the exact
sequence of~\cite{JoT} the kernel of this subgroup
has nontrivial associated 3-cocycle,
implying the kernel of the full group does as well. Since
the cocycle is trivial in the case $\delta=1$, the two corresponding
Hadamard subfactors are not isomorphic.

This completes the classification of the index-4
Hadamard subfactors.

\subsection{Hadamard 16-7}

There are five real 16 by 16 Hadamard matrices.  Numerical
computations give the dimensions of the first relative
commutants as 16, 7, 4, 3, 3.  The first one is depth 2,
and the last 3 have excessively sparse intermediate subfactor
lattices to be Bisch-Haagerup.  Hadamard 16-7, however,
may be obtained as the twisted tensor product of the unique
real $4 \times 4$ Hadamard matrix with itself, using the twist
$T=(1,1,1,...,1,-1)$.

In this case we have $H=K=\Z_2 \oplus \Z_2$.  The first commutator
subgroup $N$ is induced by unitaries in $B_0' \cap B_1$.
However, all coefficients are real (i.e. $\pm 1$) so every element
in $N$ must be order 2.

We will show that $N$ is in fact $\Z_2^4$, and that $H\ast K/Int$
is therefore a non-abelian group of order 256.  We will be able
to compute the principal graph for the subfactor as well.

$N$ is generated by the nine elements of the form $hkhk$, for
$h$ and $k$ nontrivial elements of $H$, $K$ respectively.
Each such element $n$ is induced by
$b(n) \in l^{\infty}(H) \otimes l^{\infty}(k)$.
We write these unitaries in array form, as in the previous section.
We take $H=\{1,w,x,wx\}$ and $K=\{1,y,z,yz\}$,
with the rows and columns numbered in that order.  All coefficients
will be $\pm 1$, and we label them by sign.  Again rows are indexed by
 $H$, columns by $K$.

$$b(1)=
\left[
\begin{array}{cccc}
+&+&+&+\\
+&+&+&+\\
+&+&+&+\\
+&+&+&+\end{array}
\right]
\quad
b(wywy)=
\left[
\begin{array}{cccc}
+&+&+&+\\
+&+&+&+\\
+&+&-&-\\
+&+&-&-\end{array}
\right]
$$
$$b(wzwz)=
\left[
\begin{array}{cccc}
+&+&+&+\\
+&+&+&+\\
+&-&+&-\\
+&-&+&-\end{array}
\right]
\quad
b(wyzwyz)=
\left[
\begin{array}{cccc}
+&+&+&+\\
+&+&+&+\\
-&+&+&-\\
-&+&+&-\end{array}
\right]
$$
$$b(xyxy)=
\left[
\begin{array}{cccc}
+&+&+&+\\
+&+&-&-\\
+&+&+&+\\
+&+&-&-\end{array}
\right]
\quad
b(xzxz)=
\left[
\begin{array}{cccc}
+&+&+&+\\
+&-&+&-\\
+&+&+&+\\
+&-&+&-\end{array}
\right]
$$
$$b(xyzxyz)=
\left[
\begin{array}{cccc}
+&+&+&+\\
-&+&+&-\\
+&+&+&+\\
-&+&+&-\end{array}
\right]
\quad
b(wxywxy)=
\left[
\begin{array}{cccc}
+&+&-&-\\
+&+&+&+\\
+&+&+&+\\
+&+&-&-\end{array}
\right]
$$
$$b(wxzwxz)=
\left[
\begin{array}{cccc}
+&-&+&-\\
+&+&+&+\\
+&+&+&+\\
+&-&+&-\end{array}
\right]
\quad
b(wxyzwxyz)=
\left[
\begin{array}{cccc}
-&+&+&-\\
+&+&+&+\\
+&+&+&+\\
-&+&+&-\end{array}
\right]
$$
Clearly every element of $N$ is order 2.

Multiplying a row of some $b(n)$ by -1 corresponds to an inner automorphism,
while multiplying a column by -1 is trivial.  We can rewrite the
above generators in standard form, and ignore the first row
and column (since these are always 1's).  This gives us each
generator as having four minus signs, with an even number in each
row and column.  This parity condition gives five relations, and
any product of generators will still have
this property, so these elements span a subspace of at most dimension
4 in $\Z_2^9$.  In fact \{$wywy$, $wzwz$, $xyxy$, $xzxz$\} is a minimal
generating set for $N$,
with the other elements obeying the relations

$(wywy)(wzwz) = wyzwyz$, $(xyxy)(xzxz) = xyzxyz$,

$(wywy)(xyxy)=wxywxy$, $(wzwz)(xzxz)=wxzwxz$,

and $(wywy)(wzwz)(xyxy)(xzxz)=wxyzwxyz$.

All of
these relations are valid in $Out P$, but some require nontrivial
inner adjustment.

Next we note that (in $Out$) $N$ is central.  
We compute
$$yhkhky = (yhyh)(hykhyk)$$
for $h \in H$, $k \in K$.
This will be
$(hyhy)(hyhy)(hkhk) = hkhk$ for any $h$ and $k$.
So $hkhk$ commutes with $y$.  It may be simililarly shown that $hkhk$
commutes
with every other element of $H$ and $K$, and hence
with their entire free product.

We know that $G = H \ast K / Int$ for Hadamard subfactors will
be of the form $HKN$.  $N$ is order 16, so
$|G|=|H||K||N|=4\cdot 4\cdot 16 = 256$.  We have enough data
to determine the multiplication table for $G$.  Let
$h_1, h_2 \in H$, $k_1, k_2 \in K$, and $n_1, n_2, h_2k_1h_2k_1 \in N$.
$$(h_1k_1n_1)(h_2k_2n_2)$$
$$= (h_1k_1)(h_2k_2)(n_1n_2)$$
$$=(h_2h_2)(h_2k_1h_2k_1)(k_1k_2)(n_1n_2)$$
$$=(h_1h_2)(k_1k_2)(n_1n_2)(h_2k_1h_2k_1)$$
The above relations will always allow us to express
$h_2k_1h_2k_1$ in terms of our four generators of $N$, providing
the multiplication table for the group.  We can identify
this group as number 8935 of its order in the MAGMA small-group catalog.

We may now use the methods of~\cite{BH} to find the principal graph.
Let $h \in H$, $k \in K$.

For $g \in G$, $hgk=g$ only if $h=k=1$, so the group is locally
free.  This
means that the odd vertices of the graph correspond to the
16 elements of $N$, i.e. to the $H-K$ double cosets $HnK$.

Even vertices are divided into classes according to the double
coset structure $HGH$.  A double coset $HgH$ will contain
4 elements if $g$ is in $HN$ (and hence commutes with $H$).
If $g=kn$ for $k \neq 1$, then $HgH$ contains the 16 distinct
elements of the form $hk h'kh'k n$ for $h, h' \in H$.
A 4-element double coset corresponds to a cluster of
four even vertices, each
representing an irreducible bimodule of
H-dimension 1.  A 16-element double coset corresponds to
a single bimodule of size $4^2=16$.  We have 12 single vertices
and 16 clusters, for a total of $64+12=76$ even vertices.

An odd vertex $HnK$ is connected to an even vertex
$HgH$ once for each time that the bimodule $HgH$ occurs
in the product $HnKH$.  Every even vertex in a cluster
is connected to the same odd vertices.

Since $n \in N$ is central, $HnKH = HKHn$.
$HKH$ decomposes as $HyH \cup HzH \cup HyzH \cup H1H$.  
So for any $n \in N$, $HnK$ is connected to the vertices
$HyHn=HynH$, $HznH$, $HyznH$ and the four-vertex cluster $Hn$.
The cluster $Hn$ connects only to $HnK$, and the vertex
$HknH$ connects to the four vertices $H(hkhk)nH$, $h \in H$.
This fully describes the principal graph.

Some computer simulations suggest that the 3-cocycle associated with
this subfactor is trivial, but this is not certain.

\subsection{Fourier-6}

Let $H=\Z_2$, $K=\Z_3$.  Both of these groups are prime-order cyclic, so from
section 4.7 and~\cite{BNP} kernel conjugacy is equivalent to subfactor
isomorphism, up to automorphism of the two small groups.
$H$ has one nontrivial automorphism, so this will be relevant.
$H$ is generated by $h$, $K$ by $k$.
We construct twisted tensor product of the depth-2 Hadamard
matrices corresponding to $H$ and $K$.

The first commutator subgroup for any twisted tensor product
will be an abelian group with two generators,
namely $x=hkhk^2$ and $y=hk^2hk$.  Each one of these generators
may be represented as a unitary in $\mathbb{C}[H \oplus K]$ as
above.

Let the twist be $(1,1,1,1,\chi,\xi)$, in standard form.
In this case we compute from section 3.6
$$b(x)=\left[\begin{array}{ccc}
\xi&\overline{\chi}&\overline{\xi}\chi\\
\overline{\xi}&\chi&\xi\overline{\chi}
\end{array}\right],
b(y)=\left[\begin{array}{ccc}
\chi&\overline{\chi}\xi&\overline{\xi}\\
\overline{\chi}&\chi\overline{\xi}&\xi
\end{array}\right]$$

Multiplying a column by a scalar is trivial, so $x$
and $y$ are respectively induced by
$$\left[\begin{array}{ccc}
1&1&1\\
\overline{\xi}^2&\chi^2&\xi^2\overline{\chi}^2
\end{array}\right],
\left[\begin{array}{ccc}
1&1&1\\
\overline{\chi}^2&\chi^2\overline{\xi}^2&\xi^2
\end{array}\right]$$

The first thing we want is the principal graph.  To find
this we may freely perturb $x$ and $y$ by inners.
Specifically we multiply the second row of
the above elements of $l^{\infty}(H) \otimes l^{\infty}(K)$
by $\xi^2$, $\chi^2$
respectively.  This gives the first commutator subgroup $N$ of
$G\ast H/Int=K$ as the subgroup of $(S^1)^2$
generated by
$(\chi^2\xi^2, \overline{\chi}^2\xi^4)$ and
$(\chi^4\overline{\xi}^2, \chi^2\xi^2)$.

We may describe these elements in additive notation, taking $\chi=e^{2\pi a}$,
$\xi=e^{2\pi b}$.  Then in $\mathbb{R}/\mathbb{Z}$,
these generators are $(s,s+t)$ and $(s+t,t)$ in for $s=4a-2b$, $t=4b-2a$.
The group will be finite if and only if $s$ and $t$ are both
rational, or equivalently if $a$ and $b$ are.

In this finite case, $N$ will be some finite subgroup of $\Z \oplus \Z$,
which may be directly computed without difficulty for any particular
$\chi$ and $\xi$.  Computations with the generators
give $hxh=x^{-1}$, $hyh=y^{-1}$, $kxk^2=x^{-1}y$, $kyk^2=x^{-1}$.
These relations provide
a complete multiplication table for the group $G=HKN$,
and so we can use~\cite{BH} to find the principal graph.

Since we have local freeness, odd vertices will be indexed
by double cosets $HnK$. Each double coset $HgH$ containing
$p$ single cosets $gH$ will
correspond to a cluster of $|H|/p$ even vertices, all connecting
to the same odd vertices.  Connections on the graph are
determined by breaking up $HnKH$ as a sum of double cosets
$HgH$; the vertex $HnKH$ connects to every vertex or cluster
represented in this sum, with multiplicities determined by the number
of times each $HgH$ appears.

To compare two different twists, we pick some
$l$ sufficiently
large so that all components of both twists are $l$th roots
of unity.  We then have two actions of $\tilde{G}$, with subfactor
isomorphism being equivalent to conjugacy of $G$-kernels (up
to the automorphism of $K=\Z_3$).  We can compute
the characteristic invariant, which
will sometimes allow us to assert that certain subfactors with
a given principal graph are isomorphic.

Since $(hk)^6$ and $(hk^2)^6$ act trivially for any choice
of twist (i.e., not via an inner automorphism), restriction
to cyclic subgroups does not usually provide nontrivial 3-cohomology.
We will not in general be able to assert that two
of these subfactors with the same $G$ are different--even
if the characteristic invariants are different, the 3-cocycles
might be the same.

We now give principal graphs for a few simple twists of
$H_{\Z_2} \otimes H_{\Z_3}$.

$H=\Z_3$, $K=\Z_2$, $T=(1,1,1,1,1,-1)$:  $G=\Z_6$ and the
subfactor is depth 2, so the associated
3-cocycle is trivial.  However, the characteristic invariant is
nontrivial.  The constraints on the action
of $\tilde{G}$ could imply that every nontrivial characteristic
invariant induces a nontrival 3-cocycle; this example shows
that this is not the case.

$H=\Z_2$, $K=\Z_3$, $T=(1,1,1,1,1,e^{2\pi i/3})$:
We put $x$ and $y$ in standard form as elements of $(S^1)^2$ to find
$H \ast K / Int$. 
Then 
$x=(e^{\frac{2}{3}2 \pi i},e^{\frac{1}{3}2 \pi i/3}),
y=(e^{\frac{1}{3}2 \pi i},e^{\frac{2}{3}2 \pi i/3})$.
We conclude that $x^2=y$ in $Out$, so $N=\Z_3$.  $G$ is a non-abelian
group of order 18.

Principal graph------------Dual principal graph

\begin{pspicture}(2,1.6)
\psset{dotstyle=*}
\dotnode(0.3,0.3){A1}
\dotnode(1.3,0.3){A2}
\dotnode(0.8,1.2){A3}

\psset{dotstyle=o}
\dotnode(0.3,0.1){B11}
\dotnode(0.1,0.3){B12}
\dotnode(1.5,0.3){B21}
\dotnode(1.3,0.1){B22}
\dotnode(0.6,1.4){B31}
\dotnode(1.0,1.4){B32}
\dotnode(0.8,0.3){C3}
\dotnode(1.05,0.75){C1}
\dotnode(0.55,0.75){C2}

\rput(1.15,1.4){*}

\ncline{B11}{A1}
\ncline{B12}{A1}
\ncline{B21}{A2}
\ncline{B22}{A2}
\ncline{B31}{A3}
\ncline{B32}{A3}

\ncline{C3}{A1}
\ncline{C3}{A2}
\ncline{C1}{A2}
\ncline{C1}{A3}
\ncline{C2}{A1}
\ncline{C2}{A3}
\end{pspicture}
\begin{pspicture}(2.5,1)
\end{pspicture}
\begin{pspicture}(2,1.6)

\psset{dotstyle=*}
\dotnode(0.3,0.3){A1}
\dotnode(1.3,0.3){A2}
\dotnode(0.8,1.2){A3}

\psset{dotstyle=o}
\dotnode(0.3,0.1){B11}
\dotnode(0.1,0.3){B12}
\dotnode(0.1,0.1){B13}
\dotnode(1.5,0.3){B21}
\dotnode(1.3,0.1){B22}
\dotnode(1.5,0.1){B23}
\dotnode(0.6,1.4){B31}
\dotnode(1.0,1.4){B32}
\dotnode(0.8,1.4){B33}
\dotnode(0.8,0.75){C}

\rput(1.15,1.4){*}

\ncline{B11}{A1}
\ncline{B12}{A1}
\ncline{B13}{A1}
\ncline{B21}{A2}
\ncline{B22}{A2}
\ncline{B23}{A2}
\ncline{B31}{A3}
\ncline{B32}{A3}
\ncline{B33}{A3}

\ncline{C}{A1}
\ncline{C}{A2}
\ncline{C}{A3}
\end{pspicture}

$H=\Z_2$, $K=\Z_3$, $T=(1,1,1,1,1,i)$: $|G|=24$.
Here we find $N=\Z_2 \oplus \Z_2$, where $x$ and $y$ are the two
generators.

Principal graph------------Dual principal graph

\begin{pspicture}(2,1.6)
\psset{dotstyle=*}
\dotnode(0.3,0.3){A1}
\dotnode(0.3,1.3){A2}
\dotnode(1.3,1.3){A3}
\dotnode(1.3,0.3){A4}

\psset{dotstyle=o}
\dotnode(0.1,0.3){B11}
\dotnode(0.3,0.1){B12}
\dotnode(0.1,1.3){B21}
\dotnode(0.3,1.5){B22}
\dotnode(1.5,1.3){B31}
\dotnode(1.3,1.5){B32}
\dotnode(1.5,0.3){B41}
\dotnode(1.3,0.1){B42}

\dotnode(0.3,0.8){C1}
\dotnode(0.8,1.3){C2}
\dotnode(1.3,0.8){C3}
\dotnode(0.8,0.3){C4}

\rput(1.65,1.3){*}

\ncline{B11}{A1}
\ncline{B12}{A1}
\ncline{B21}{A2}
\ncline{B22}{A2}
\ncline{B31}{A3}
\ncline{B32}{A3}
\ncline{B41}{A4}
\ncline{B42}{A4}

\ncline{C1}{A1}
\ncline{C1}{A2}
\ncline{C2}{A2}
\ncline{C2}{A3}
\ncline{C3}{A3}
\ncline{C3}{A4}
\ncline{C4}{A4}
\ncline{C4}{A1}
\end{pspicture}
\begin{pspicture}(2.5,1)
\end{pspicture}
\begin{pspicture}(1.6,1.6)
\psset{dotstyle=*}
\dotnode(0.3,0.3){A1}
\dotnode(0.3,1.3){A2}
\dotnode(1.3,1.3){A3}
\dotnode(1.3,0.3){A4}

\psset{dotstyle=o}
\dotnode(0.1,0.3){B11}
\dotnode(0.3,0.1){B12}
\dotnode(0.1,0.1){B13}
\dotnode(1.5,1.3){B31}
\dotnode(1.3,1.5){B32}
\dotnode(1.5,1.5){B33}

\dotnode(0.6,0.6){C3}
\dotnode(1,1){C1}

\rput(1.65,1.3){*}

\ncline{B11}{A1}
\ncline{B12}{A1}
\ncline{B13}{A1}
\ncline{B31}{A3}
\ncline{B32}{A3}
\ncline{B33}{A3}

\ncline{C1}{A2}
\ncline{C1}{A3}
\ncline{C1}{A4}
\ncline{C3}{A1}
\ncline{C3}{A2}
\ncline{C3}{A4}

\end{pspicture}

Now let $\xi$ be a primitive 15th root of unity, and consider
$T=(1,1,1,1,1,\xi)$.  We wish to consider cohomology in this case,
so we do not perturb the generators by inner automorphisms (multiplying
a row by a scalar).  However multiplying columns by scalars is still
trivial.

We then have
$b(x)=\left[\begin{array}{ccc}1&1&1\\
                \xi^{-2}&1&\xi^2\end{array}\right]$,
$b(y)=\left[\begin{array}{ccc}1&1&1\\
                1&\xi^{-2}&\xi^2\end{array}\right]$.
We have $x^{15}=y^{15}=1$, but there is an additional relation:
$b(x^5y^5)=\left[\begin{array}{ccc}1&1&1\\
\xi^{-10}&\xi^{-10}&\xi^{20}\end{array}\right]$.  Since
$\xi^{20}=\xi^{-10}$, this element of $l^{\infty}(H) \otimes l^{\infty}(K)$
induces the inner
automorphism $Ad u$, $u=(1, \xi^{-10}) \in \Delta_m$.
It follows that $N = \Z_5 \oplus \Z_{15}$, with generators
$x+y$ and $x$, and $|G|=450$.  The principal graph may be computed
using the same methods as above.

In this case $\tilde{G}$ is an order-1350 group with inner subgroup
$\Z_3$, since $u^3$ is the identity.
The characteristic invariant
is completely determined by $u$, as discussed above.  If we let
$\xi=e^{\frac{a}{15}2\pi i}$, then for $\xi$ to be a primitive
15th root of unity we must have $a \in \{1,2,4,7,8,11,13,14\}$.
Choosing $a \in \{1,4,7,13\}$ gives the same value
for $\xi^{-10}$.  It follows that the corresponding four group
actions have the same characteristic invariant, and therefore
the same 3-cocycle.  This means that
the subfactors are isomorphic.  Likewise choosing $a$ from
$\{2,8,11,14\}$ gives isomorphic subfactors.  Applying
the automorphism of $K$ sending $k$ to  $k^2$ does not
change the characteristic invariant in either case.

These two kinds of roots give different characteristic invariants,
but unlike the index-four case it is not possible to detect
3-cohomology on cyclic subgroups.  Some appropriate abelian
subgroups might allow us to detect 3-cohomology, but for
now it is not clear if these two types of subfactor are isomorphic.

\subsection{Other Examples}

We will now consider the case $H=K=\Z_3$, with twist
$(1,1....,1,\xi)$ for $\xi$ a cube root of unity.  This example
is of interest because the cohomology is more tractible than
in the index-6 case.

First we need to compute $N \subset Out$.  Let $H$ and $K$
have generators $h$,$k$.  Then there are four generators
of $N$, namely
$w=hkh^2k^2$,
$x=hk^2h^2k$,
$y=h^2khk^2$,
$z=h^2k^2hk$.  As usual we may freely multiply columns by scalars
without changing the induced automorphism.

$$b(w)=
\left[\begin{array}{ccc}
\xi&1&\xi^2\\
1&1&1\\
\xi^2&1&\xi
\end{array}\right]$$
so $w$ is induced by
$$
\left[\begin{array}{ccc}
1&1&1\\
\xi^2&1&\xi\\
\xi&1&\xi^2
\end{array}\right]$$
Likewise,
$$
b(x)=
\left[\begin{array}{ccc}
1&\xi&\xi^2\\
1&1&1\\
1&\xi^2&\xi
\end{array}\right]$$
so $x$ is induced by
$$
\left[\begin{array}{ccc}
1&1&1\\
1&\xi^2&\xi\\
1&\xi&\xi^2
\end{array}\right]
$$
and $y$ and $z$ are induced by respectively
$$
\left[\begin{array}{ccc}
1&1&1\\
\xi&1&\xi^2\\
\xi^2&1&\xi\end{array}\right],\quad
\left[\begin{array}{ccc}
1&1&1\\
1&\xi&\xi^2\\
1&\xi^2&\xi\end{array}\right],
$$

We see then that $w^3=x^3=y^3=z^3=1$, $w=y^2, x=z^2$.
Finally $y=Aduz^2$, where $u=(1,\xi,\xi^2) \in B_0$ induces
a compatible inner automorphism.  So $N=Z_3$, and $G=H \ast K /Out$
is order 81.  The extension to $\tilde{G}$ is order 243,
since the inner subgroup $S=\{1,Adu,Adu^*\}$ is order 3.  $s=Ad u$
is a generator of $S$.

Now we consider the cyclic subgroup generated by $hk$.
$(hk)^3 = hkhkhk=(hkh^2k^2)(kh^2k^2h)(h^2k^2hk) = wy^{-1}z=u^*=s^2$.
$u^*$ is order 3, so $hk$ generates an order-9 cyclic subgroup
in $\tilde{G}$, with inner subgroup $Z_3$.  $hk(u^*)=h(u^*)=(\xi,1,\xi^2)=\xi u^*$,
so the associated characteristic invariant coefficient
$\lambda(hk,s^2)$
is equal to $\xi$.

Since $H^2(\Z_9)$ is trivial, the exact sequence of~\cite{JoT} implies
that the map from $\Lambda_{\Z_9, \Z_3}$ to $H^3(\Z_3)$ is injective.
This means that the restriction of $G$ to the cyclic subgroup generated
by $hk$ has nontrivial 3-cocycle, so $G$ itself does as well.
The given representation of $G$ in $Out M$ does not lift to $Aut$.

The two possible values of $\xi$ give different
characteristic invariants, with different
restrictions to the above cyclic subgroup.  This
might suggest that the associated subfactors are different,
and indeed the actions of $H \ast K$ given by the two
twists are not outer conjugate.

However, performing the automorphism of $H$
given by $h \rightarrow h^2$ switches the second and
third basic projections of $B_0$.  This sends $u$ to $u^*$.
In fact, this tranformation 
changes the entire first commutator subgroup
in $B_0' \cap B_1$ for
$\xi=e^{2\pi i/3}$ to that of
$\xi=e^{4\pi i/3}$,
and therefore sends the first characteristic invariant
to the second.  Such a group automorphism corresponds
to an automorphism of $\mathbb{C}[H]$, and does not change
the subfactor.  The result of $~\cite{BNP}$ applies here,
so equivalence of characteristic invariants
gives subfactor isomorphism.
It follows that the two values of $\xi$
in fact give isomorphic subfactors.

\subsection{Infinite-Depth Hadamard Subfactors}

We now obtain a new infinite-depth Hadamard
subfactor.  Let $H=\Z_2$, $K=\Z_3$.  If the two
twist parameters are
mutually irrational, then $N$ is equal to $\Z_2^2$, and the group
is $G_{2,3,6}$ of~\cite{BH}. The corresponding
Hadamard subfactor is of infinite depth.   The principal
graph for this subfactor is given in~\cite{BH}.

The same construction gives a family of infinite-depth
Hadamard subfactors.
For any two finite
abelian groups $H$, $K$, we may take a generic twist with
all entries mutually irrational.  We will likewise obtain
$N=\Z^{(|H|-1)(|K|-1)}$, and find an infinite-depth Hadamard
subfactor of index $|H||K|$.

For all of these subfactors, $G=H\ast K/Int$ has a finite-index
abelian subgroup, namely $N$.  Therefore $G$ is always amenable, and
from~\cite{BH} these
these subfactors are amenable as well.  In fact $G$ displays polynomial
growth in its generators, so the entropy conditions of~\cite{BH} apply,
and the subfactors are strongly amenable.  No examples of nonamenable
Hadamard subfactors are currently known.

\subsection{Non-Commutative Groups}

Let $G$ be any finite group, not necessarily commutative, $|G|=n$.
Just as in the abelian case, we may construct the canonical
commuting square
$$\commsq{\mathbb{C}}{l^{\infty}(G)}{\mathbb{C}[G]}{M_n(\mathbb{C})}$$
where we have $u_g \in \mathbb{C}[G]$ acting via the left regular
representation on $l^{\infty}[G]$.  This commuting square is symmetric,
so iterating the basic construction gives a subfactor $N \subset M$.
The conditions of lemma 4.2 are satisfied by the
$u_g$'s, so this subfactor
is $N \subset N \rtimes G$.

Likewise we may consider
$$\commsq{\mathbb{C}}{\mathbb{C}[G]}{l^{\infty}(G)}{M_n(\mathbb{C})}$$
In this case we take $v_g$ to be the unitary
acting on $l^{\infty}[G]$ via the right regular representation.
$g \rightarrow Ad v_g$ is an action of $G$ on
$M_n(\mathbb{C})$.
Left and right mutiplication commute, and the fixed point algebra
of this action
is exactly $\mathbb{C}[G]$.  The conditions of lemma 4.1
are then satisfied, so the compatible outer action of $G$ on
$M$ induced by the $Ad v_g$'s gives $N=M^G$.

We will now construct a Bisch-Haagerup subfactor just
as in section 3.6.
For two groups $H$ and $K$, with $|H|=m$ and $K=|N|$, we
may use the above
commuting squares to
construct a composite diagram as follows:

$$\begin{array}{ccccc}
l^{\infty}(H) \otimes \mathbb{C}[K]&\subset&
M_m(\mathbb{C}) \otimes \mathbb{C}[K]&\subset&
M_m(\mathbb{C)} \otimes M_n(\mathbb{C})\\
\cup& &\cup& &\cup\\
l^{\infty}(H) \otimes 1&\subset&
M_m(\mathbb{C})\otimes 1&\subset&M_m(\mathbb{C}) \otimes l^{\infty}(K)\\
\cup& &\cup& &\cup\\
\mathbb{C}&\subset&\mathbb{C}[H] \otimes 1&\subset&
\mathbb{C}[H] \otimes l^{\infty}(K)\end{array}$$

The overall quadrilateral
$$\commsq{\mathbb{C}}{ \mathbb{C}[H] \otimes l^{\infty}(K)}
{l^{\infty}(H) \otimes \mathbb{C}[K]}{M_m(\mathbb{C}) \otimes
M_n(\mathbb{C})}$$ commutes, since the original two squares
commute and
$$E_{A \otimes B}(x \otimes y)=E_A(x) \otimes E_B(y)$$
Symmetry follows from symmetry of the original
squares, all inclusions are connected,
and all traces may be taken to be Markov
as in section 3.6.  Therefore this commuting square gives a subfactor
$N \subset M$ via iteration of the basic construction,
just as in the Hadamard case.

The top quadrilateral
$$\commsq
{l^{\infty}(H) \otimes 1}
{M_m(\mathbb{C}) \otimes l^{\infty}(K)}
{l^{\infty}(H) \otimes \mathbb{C}[K]}{M_m(\mathbb{C}) \otimes
M_n(\mathbb{C})}$$ commutes as well, so we again have
an intermediate subfactor $N \subset P \subset M$.
Let
$$B_0=l^{\infty}(H) \otimes 1$$
$$B_1=M_m(\mathbb{C}) \otimes l^{\infty}(K)d$$

$B_0 \subset B_1$ generates $P$ via iteration of
the basic construction.  As before
$$B_0' \cap B_1 = l^{\infty}(H) \otimes l^{\infty}(K)$$
and is abelian.

Let $a_h=v_h \otimes 1$, where
$v_h$ comes from the right regular represention of $H$.
These unitaries induce an action of
$H$ on $B_1$ which leaves $B_0$ invariant, such that
$B_0^H = \mathbb{C}[H] \otimes l^{\infty}(K)$.  So from
section 3.1,
$N=P^H$, where the action of $H$ is given by
the compatible automorphisms of $P$ induced
by $Ad a_h$.

Let $a_k = 1 \otimes u_k$.
Then the $a_k$'s along with $B_1$ generate
$M_m(\mathbb{C}) \otimes M_n(\mathbb{C})$, since
the original commuting square is nondegenerate,
and $B_0 \rtimes K = l^{\infty}(H) \otimes \mathbb{C}[K]$.
So from section 3.2, $M = P \rtimes K$.

The composite subfactor
$N \subset M$ is therefore of Bisch-Haagerup type.  Since $a_h$ and
$a_k$ commute, it is depth 2.

Let $T$ be a unitary in $l^{\infty}(H) \otimes l^{\infty}(K)$.
We then consider the inclusion of algebras

$$\begin{array}{ccc}
l^{\infty}(H) \otimes \mathbb{C}[K]&\subset&
M_m(\mathbb{C)} \otimes M_n(\mathbb{C})\\
\cup& &\cup\\
l^{\infty}(H) \otimes 1&\subset&
M_m(\mathbb{C}) \otimes l^{\infty}(K)\\
\cup& &\cup\\
\mathbb{C}&\subset&\
T(\mathbb{C}[H] \otimes l^{\infty}(K))T^*\end{array}$$

The top square still commutes.  The bottom quadrilateral
is obtained by applying $Ad T$ to the bottom
square of the original figure, since $T$ normalizes
$B_0$ and $B_1$.  Therefore the bottom square still
commutes, and is isomorphic to the bottom square in the
previous diagram.  It follows that the overall square commutes as well,
and we obtain $N \subset P \subset M$ by iteration
of the basic construction.
We again find that the subfactor is $P^H \subset P \rtimes K$,
where the action of $K$ is given by $Ad b_k = Ad (1 \otimes u_k)$,
but the action of $H$ is induced by $Ad b_h = Ad T(v_k \otimes 1)T^*$.
These actions are compatible with the tower of $P$, as in the Hadamard
case.

As before $H$ and $K$ act faithfully on $B_0' \cap B_1=
l^{\infty}(H) \otimes l^{\infty}(K)$
(although $H$ now acts by right multiplication).  Therefore
the first commutator subgroup $N$ of $H \ast K$
acts trivially on
$B_0' \cap B_1$.  As in section 3.6, since
this algebra is maximal abelian in $M_m(\mathbb{C}) \otimes M_n(\mathbb{C})$,
it follows that $N|_{B_1}$ is abelian; $N$ acts compatibly,
so its action on all of $P$ is abelian as well.
For $h \in H$, $k \in K$, we find
$hkh^{-1}k^{-1}$ is induced by
$Th(T^*)k(T^*)hk(T)$, exactly as in the Hadamard case.
Outerness of compatible automorphisms is as before, and
we may still compute principal graphs.  The computation
of the characteristic invariant is the same, but the result of~\cite{BNP}
will not apply,
so classifying up to subfactor isomorphism will be more difficult
than in the abelian case.

This construction allows us to find principal graphs for a larger class
of commuting-square subfactors, which includes all the Hadamard examples
mentioned previously.  We present one example here.

Let $H=\Z_2$, $K=S_3$, $T=(1,...,1,i)$.
Then there are five generators of $N$.  Each is order-2,
and their product is a nontrivial inner, so $N=\Z_2^4$.
We find that $|G|=192$.  The multiplication table of $G$
may be found just as in the Hadamard case, so the methods
of~\cite{BH} give the principal graph.  There
are 16 odd vertices.  Every element of $N$ is order 2,
so $h(hkhk^{-1})h=khk^-1h=hkhk^{-1}$, and $N$ commutes
with $H$; it follows that $N$ is equal to the commutant
of $H$ in $KN$.

This means that there are 16 double
cosets of size 2 in $G=HKN$, giving 32
vertices (each connected to one odd vertex),
and 40 of size 4, giving 40 more (each connected
to two odd vertices),
for a total of 72 even vertices.  Each odd vertex
connects to 7=2+5 even vertices.
The multiplication
table of $G$ is known, so the full principal 
graph may be constructed using~\cite{BH}, and the dual
principal graph in the same way.

\vspace{0.5cm}
\noindent{\it Acknowledgement.}
I am grateful to Professor Vaughan Jones for suggesting this problem to me,
and for describing the twisted tensor product construction.
The results of this paper were part of my doctoral thesis at
UC Berkeley \cite{RB}.

\end{document}